\begin{document}
\numberwithin{equation}{section}
\title[Elliptic curve short average]{Short average distribution of a prime counting function over families of elliptic curves}
\author{Sumit Giri}
\email{sumit.giri199@gmail.com}
\address{School of Mathematics, Tel Aviv University, P.O.B. 39040, Ramat Aviv, Tel Aviv 69978, Israel.}

\classification{11G05 (primary), 11G20, 11N05 (secondary).}
\keywords{average order, elliptic curves, primes k-tuple, Barban-Davenport-Halberstam theorem.}

\newtheorem{theoremm}{Theorem}
\newtheorem{lemma}{Lemma}
\newtheorem{theorem}{Theorem}

\newtheorem{rem}{Remark}
\newtheorem{note}{Note}
\newtheorem{prop}{Proposition}
\newtheorem{cor}{Corollary}
\newtheorem{defn}{Definition}
\newtheorem{conj}{Conjecture}

\renewcommand{\thetheoremm}{\Alph{theoremm}}
\renewcommand{\theenumi}{\Alph{enumi}}
\newcommand{\N}{\mathbb{N}}
\newcommand{\Z}{\mathbb{Z}}
\newcommand{\Q}{\mathbb{Q}}
\newcommand{\R}{\mathbb{R}}
\newcommand{\C}{\mathcal{C}}
\newcommand{\PP}{\mathcal{P}}
\newcommand{\RR}{\mathbb{R}}
\newcommand{\QQ}{\mathbb{Q}}
\newcommand{\CC}{\mathbb{C}}
\newcommand{\NN}{\mathbb{N}}
\newcommand{\ZZ}{\mathbb{Z}}
\newcommand{\FF}{\mathbb{F}}
\newcommand{\m}{\Z /m \Z}
\newcommand{\F}{\mathbb{F}}
\newcommand{\f}{\hat{F}}
\newcommand{\g}{\hat{G}}
\newcommand{\D}{\Z /d \Z}
\newcommand{\n}{\Z /n \Z}
\newcommand{\A}{\mathcal{A}}
\newcommand{\h}{\mathcal{H}}
\newcommand{\B}{\mathcal{B}}
\newcommand{\OO}{\mathcal{O}}
\newcommand{\p}{\textfrak{p}}
\newcommand{\q}{\textfrak{q}}
\newcommand{\bb}{\textfrak{b}}
\newcommand{\aaa}{\textfrak{a}}
\newcommand{\pr}{\displaystyle \prod}
\newcommand{\s}{\displaystyle \sum}

\newcommand{\aut}{\rm Aut}
\newcommand{\FFp}{S,T\in\FF(P)}
\newcommand{\FFps}{S,T\in\FF(P)^*}
\newcommand{\FFpu}{\exists (u_1,\ldots, u_L)\in\FF(P)^*}
\newcommand{\wpab}{w(P, N,E_{a,b})}

\newcommand{\sumpchiimpa}{\underset{\substack{p\leq X \\  p_i^-<p_{i+1}<p_{i}^+ \\ 1 \leq i \leq s-1}}{{\sum }^*}}
\newcommand{\sumpchiimpb}{\underset{\substack{p_i^-<p_{i+1}<p_{i}^+ \\ s+1 \leq i \leq L}}{{\sum }^*}}
\newcommand{\sumpchiprim}{\sumofallps\sum_{\substack{(\chi_i')^6=\chi_0 \imod{p_i}\\
{\rm for}\: 1\leq i\leq \ell\:{\rm and} \\ \exists 1\leq j \leq \ell \:{\rm s.t.}\: \chi_j'\neq \chi_0 \imod{p_j}}} }
\newcommand{\sumofallps}{\sum_{\underset{p_m\neq p_n,\, \forall m\neq n }{\underset{1\le i\le \ell}{N^-<p_i<N^+}}}}
\newcommand{\sumofallh}{\sum_{\underset{p_m\neq p_n,\, \forall m\neq n }{\underset{1\le i\le \ell}{N^-<p_i<N^+}}}\prod_{j=1}^\ell H(D_N(p_j))}
\newcommand{\sumofpfrac}{\sum_{\underset{p_m\neq p_n,\, \forall m\neq n }{\underset{1\le i\le \ell}{N^-<p_i<N^+}}}\frac{1}{p_1^2\cdots p_L^2}}
\newcommand{\sumofpfracsing}{\sum_{\underset{p_m\neq p_n,\, \forall m\neq n }{\underset{1\le i\le \ell}{N^-<p_i<N^+}}}\prod_{i=1}^{\ell}\frac{1}{p_i}}
\newcommand{\sumofpminusone}{\sum_{\underset{p_m\neq p_n,\, \forall m\neq n }{\underset{1\le i\le \ell}{N^-<p_i<N^+}}}\frac{1}{(p_1-1)\cdots (p_\ell-1)}}
\newcommand{\sumofallpsx}{\sum_{\underset{p_m\neq p_n,\, \forall m\neq n }{\underset{1\le i\le e+f}{N^-<p_i<N^+}}}}
\newcommand{\sumofallpsy}{\sum_{\underset{p_m\neq p_n,\, \forall m\neq n }{\underset{e+f+1\le i\le \ell}{N^-<p_i<N^+}}}}
\makeatletter
\def\imod#1{\allowbreak\mkern7mu({\operator@font mod}\,\,#1)}
\makeatother

\begin{abstract}
 Let $E$ be an elliptic curve defined over $\Q$ and let $N$ be a positive integer. Now, $M_E(N)$ counts the number of primes $p$ such that the group $E_p(\F_p)$ is of order $N$. In an earlier joint work with Balasubramanian, we showed that $M_E(N)$ follows Poisson distribution when an average is taken over a family of elliptic curve with parameters $A$ and $B$ where $A,\, B\ge N^{\frac{\ell}{2}}(\log N)^{1+\gamma}$ and $AB>N^{\frac{3\ell}{2}}(\log N)^{2+\gamma}$ for a fixed integer $\ell$ and any $\gamma>0$. In this paper we show that for sufficiently large $N$, the same result holds even if we take $A$ and $B$ in the range $\exp(N^{\frac{\epsilon^2}{20\ell}})\ge A, B>N^\epsilon$ and $AB>N^{\frac{3\ell}{2}}(\log N)^{6+\gamma}$ for any $\epsilon>0$.
\end{abstract}

\maketitle
\section{Introduction}
Let $E$ be an elliptic curve defined over the field of rationals $\Q$ with discriminant $\Delta_E$. For a prime $p$ where $E$ has good reduction, i.e. $p\nmid \Delta_E$, we denote by $E_p$ the reduction of $E$ modulo $p$.
Let $\F_p$ be the finite field with $p$ elements and $E_p(\F_{p})$ be the group of $\F_p$ points over $E_p$. \par
For $p\nmid \Delta_E$, we know $|E_p(\F_p)|=p+1-a_p(E)$ where $a_p(E)$ is the trace of the Frobenius morphism at $p$. By Hasse's theorem we know $|a_p(E)|<2\sqrt{p}$.
For a fixed positive integer $N$, we define the following prime counting function
\begin{align}
 M_E(N):=\#\{p \text{ prime}:\, E\text{ has good reduction over $p$ and } |E_p(\F_p)|=N \}. \label{eq_1.1}
\end{align}
Now, for a pair of integers $(a,b)$, let $E_{a,b}$ be the elliptic curve defined by the Weierstrass equation
$$E_{a,b}:y^2=x^3+ax+b.$$
Also, for $A,B>0$, we define the family of curves $\C(A,B)$  by \begin{align}
                                                               \C(A,B):=\{E_{a,b}:\, |a|\le A, |b|\le B, \Delta(E_{a,b})\neq 0\}.\label{eq_1.6}
                                                              \end{align}
 Now, let us recall Barban-Davenport-Halberstam conjecture for primes in arithmetic progression in short interval.
\begin{conj}(Barban-Davenport-Halberstam)\label{conj_1} Let $\theta(x; q, a) = \underset{p\le x, {p\equiv a (\text{mod }q)}}{\sum} \log p$. Let $0 < \eta \le 1$ and $\beta > 0$ be real numbers. Suppose that $X$, $Y$, and $Q$ are positive real numbers satisfying $X^\eta \le Y \le X$
and $Y /(\log X)^\beta \le Q \le Y$. Then
$$\sum_{q\le Q }\sum_{\underset{(a,q)=1}{1\le a\le q}}|\theta(X + Y ; q, a)- \theta(X; q, a) -\frac{Y}{\phi(q)}|^2\ll_{\eta,\beta} Y Q \log X.$$
 \end{conj}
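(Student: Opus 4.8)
The plan is to run the classical proof of the Barban--Davenport--Halberstam theorem with the long interval $[1,X]$ replaced by the short interval $(X,X+Y]$; since the large--sieve inequality is translation invariant, that half of the machinery carries over verbatim, and the only genuinely new input is a prime number theorem on $(X,X+Y]$. First pass from $\theta$ to $\psi(t,\chi)=\sum_{n\le t}\Lambda(n)\chi(n)$ (the discrepancy between the two is negligible here) and set $\Delta_{\chi}:=\psi(X+Y,\chi)-\psi(X,\chi)$. Expanding the squares by orthogonality of the characters modulo $q$ --- the $\chi_{0}$ term produces $\tfrac{1}{\phi(q)}\bigl(\psi(X+Y)-\psi(X)\bigr)+O\bigl(\tfrac{\log qX}{\phi(q)}\bigr)$, i.e.\ $Y/\phi(q)$ up to the short--interval error $\psi(X+Y)-\psi(X)-Y$ --- one gets, for each $q\le Q$,
\begin{equation*}
\sum_{\substack{1\le a\le q\\(a,q)=1}}\Bigl|\psi(X+Y;q,a)-\psi(X;q,a)-\tfrac{Y}{\phi(q)}\Bigr|^{2}
=\frac{1}{\phi(q)}\sum_{\substack{\chi\bmod q\\ \chi\neq\chi_{0}}}|\Delta_{\chi}|^{2}
+O\!\left(\frac{|\psi(X+Y)-\psi(X)-Y|^{2}+(\log qX)^{2}}{\phi(q)}\right).
\end{equation*}
Summing over $q\le Q$, the error term contributes $O\bigl((|\psi(X+Y)-\psi(X)-Y|^{2}+(\log X)^{2})\log Q\bigr)$. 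The diagonal contribution $n_{1}=n_{2}$ inside $\sum_{\chi\bmod q}|\Delta_{\chi}|^{2}$ equals $\phi(q)\sum_{X<n\le X+Y}\Lambda(n)^{2}$, and Brun--Titchmarsh gives $\sum_{X<n\le X+Y}\Lambda(n)^{2}\ll_{\eta}Y\log X$ for $Y\ge X^{\eta}$, so the diagonal summed over $q\le Q$ is $\ll_{\eta}YQ\log X$ --- already the conjectured order of magnitude. What remains is to show that the off--diagonal part of $\sum_{q\le Q}\tfrac{1}{\phi(q)}\sum_{\chi\neq\chi_{0}}|\Delta_{\chi}|^{2}$ is also $\ll_{\eta,\beta}YQ\log X$.

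For this, induce each nonprincipal $\chi\bmod q$ from the primitive character $\chi^{*}$ of conductor $f\mid q$, $f>1$ (again a negligible correction, involving only the primes in $(X,X+Y]$ dividing $q$), which reduces the problem to bounding
\begin{equation*}
\sum_{1<f\le Q}\Bigl(\,\sum_{\substack{q\le Q\\ f\mid q}}\tfrac{1}{\phi(q)}\Bigr)\sum_{\chi\bmod f}^{*}|\Delta_{\chi}|^{2},\qquad\text{with }\ \sum_{\substack{q\le Q\\ f\mid q}}\tfrac{1}{\phi(q)}\ll\frac{\log(2Q/f)}{\phi(f)},
\end{equation*}
the superscript $*$ denoting a sum over primitive characters. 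Split the $f$--sum at $f=(\log X)^{B}$ with $B=\beta+1$. For $f\le(\log X)^{B}$ the short--interval Siegel--Walfisz estimate $\Delta_{\chi}\ll Y\exp(-c\sqrt{\log X})$, valid for every primitive $\chi$ of conductor at most a fixed power of $\log X$, makes this range contribute $\ll(\log X)^{B+1}\,Y^{2}\exp(-2c\sqrt{\log X})=o(YQ\log X)$. For $(\log X)^{B}<f\le Q$, split dyadically into $F<f\le 2F$ and apply the large sieve,
\begin{equation*}
\sum_{F<f\le 2F}\frac{f}{\phi(f)}\sum_{\chi\bmod f}^{*}\Bigl|\sum_{X<n\le X+Y}\Lambda(n)\chi(n)\Bigr|^{2}\ll(F^{2}+Y)\sum_{X<n\le X+Y}\Lambda(n)^{2}\ll_{\eta}(F^{2}+Y)\,Y\log X,
\end{equation*}
so that, after inserting the weight $\tfrac{\log(2Q/f)}{\phi(f)}\asymp\tfrac{\log(2Q/F)}{F}\cdot\tfrac{f}{\phi(f)}$, each dyadic block is $\ll_{\eta}\log(2Q/F)\,(F+Y/F)\,Y\log X$; summing over $F\in[(\log X)^{B},Q]$ this is $\ll_{\eta,\beta}YQ\log X$, because $\sum_{F}F\log(2Q/F)\ll Q$ (the logarithm is weighted towards the smaller $F$ and so does no harm) while $\sum_{F}\tfrac{Y}{F}\log(2Q/F)\ll\tfrac{Y\log Q}{(\log X)^{B}}\ll Q$ by the choice of $B$ together with $Q\ge Y/(\log X)^{\beta}$. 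Combining the two ranges with the diagonal completes the estimate.

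The step I expect to be the real obstacle is none of the above --- the large--sieve part is unconditional and essentially best possible --- but rather the two short--interval prime number theorems on which the argument rests: $\psi(X+Y)-\psi(X)=Y+O\bigl(Y/(\log X)^{A}\bigr)$ for a suitable $A=A(\beta)$ (needed in the opening reduction so that its error term is $\ll YQ\log X$ even when $Q$ is as small as $Y/(\log X)^{\beta}$), and its version uniform over moduli $f\le(\log X)^{B}$ (the short--interval Siegel--Walfisz estimate used above). Unconditionally these are known only when $Y$ is not too small relative to $X$ --- for instance for $Y\ge X^{7/12+\epsilon}$, by Huxley's zero--density method --- and under the Riemann Hypothesis for $Y\ge X^{1/2+\epsilon}$; in the full range $Y\ge X^{\eta}$ permitted in Conjecture~\ref{conj_1}, that is for $Y$ a fixed small power of $X$, no such asymptotic is presently known, and this is precisely why the statement is a conjecture and not a theorem. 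Thus the scheme above proves the full assertion unconditionally for $Y\ge X^{7/12+\epsilon}$ and, under the Riemann Hypothesis, for $Y\ge X^{1/2+\epsilon}$; and, conditionally on a sufficiently strong hypothesis about the distribution of the zeros of $L(s,\chi)$ near the line $\sigma=1$ (a zero--density or pair--correlation estimate, uniform in $\chi$), for every $\eta>0$.
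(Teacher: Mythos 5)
You should first note that the statement you were asked to prove is not a theorem of the paper at all: it is Conjecture~\ref{conj_1}, the short-interval Barban--Davenport--Halberstam statement, which the paper (following David--Smith and Parks) takes as a \emph{hypothesis} --- Theorem~\ref{thmm_1}, Theorem~\ref{Par} and Theorem~\ref{Th_03} are all conditional on it, and no proof of it appears or is claimed anywhere in the paper. So there is no ``paper proof'' to compare against, and no complete proof should be expected: the paper needs the conjecture for some $\eta<\frac12$, indeed for $\eta$ essentially of size $\frac12-(\gamma+2)\frac{\log\log N}{\log N}$, a range far beyond what is currently provable even on the Riemann Hypothesis.

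Judged on its own terms, your sketch is the standard route and is essentially sound: reduce from $\theta$ to $\psi$, expand by orthogonality, pass to primitive characters with the weight $\sum_{q\le Q,\,f\mid q}1/\phi(q)\ll \log(2Q/f)/\phi(f)$, dispose of small conductors $f\le(\log X)^{B}$ by a Siegel--Walfisz input and of large conductors by the large sieve applied dyadically, with the diagonal $\sum_{X<n\le X+Y}\Lambda(n)^{2}\ll_{\eta}Y\log X$ supplying the main order $YQ\log X$; your bookkeeping of the dyadic sums and of the choice $B=\beta+1$ against the constraint $Q\ge Y/(\log X)^{\beta}$ is correct. You also correctly locate the genuine obstruction: the argument needs $\psi(X+Y)-\psi(X)-Y$, and its analogue twisted by characters of conductor up to a power of $\log X$, to be small for $Y=X^{\eta}$, and such short-interval prime number theorems are known only for $Y\ge X^{7/12+\epsilon}$ unconditionally (and $Y\ge X^{1/2+\epsilon}$ under RH). Consequently what you have is a proof of the BDH asymptotic variance bound in those restricted ranges, together with an accurate explanation of why the full statement for all $0<\eta\le 1$ --- in particular for the $\eta<\frac12$ the paper actually invokes --- remains a conjecture; it is not, and could not honestly be presented as, a proof of the statement as quoted. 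That conclusion is the right one, and it matches how the paper uses the statement.
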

 Under the above conjecture, David and Smith proved that
\begin{theoremm}\label{thmm_1} Let Conjecture \ref{conj_1} be true for some $0<\eta<\frac{1}{2}$.
If $A, B\ge \sqrt{N}(\log N)^{1+\gamma}\log \log N$ and that $AB\ge N^{\frac{3}{2}}(\log N)^{2+\gamma}\log \log N$, then for any odd integer $N$, we have
\begin{align}
\frac{1}{\#\C(A,B)}\sum_{E\in \C(A,B)}M_E(N)&=\frac{K(N)N}{\phi(N)\log N}+O(\frac{1}{(\log N)^{1+\gamma}}),
\intertext{with }
K(N):=\prod_{p\nmid N}\Bigg( 1-\frac{(\frac{N-1}{p})^2p+1}{(p-1)^2(p+1)}\Bigg)&\prod_{{p\mid N}}\left(1-\frac{1}{p^{\nu_p(N)}(p-1)}\right), \label{eq_1.5} \end{align}
where $\nu_p$ denotes the usual $p$-adic valuation where $\nu_p(n)$ and $\left(\frac{n-1}{p}\right)$ is the Kronecker symbol.\par
\end{theoremm}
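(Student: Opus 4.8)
The first step is to unfold $M_E(N)=\sum_p\mathbf 1[\,p\nmid\Delta_E,\ |E_p(\F_p)|=N\,]$ and exchange the two summations. Since $|E_{a,b}(\F_p)|=p+1-a_p(E_{a,b})$ and $|a_p(E_{a,b})|<2\sqrt p$ by Hasse's theorem, the condition $|E_{a,b}(\F_p)|=N$ is equivalent to $a_p(E_{a,b})=t_p:=p+1-N$ and forces $p$ into the short interval $I_N:=\bigl((\sqrt N-1)^2,(\sqrt N+1)^2\bigr)$, of length $\sim 4\sqrt N$ about $N$. Hence
$$\sum_{E\in\C(A,B)}M_E(N)=\sum_{p\in I_N}\#\bigl\{(a,b):|a|\le A,\ |b|\le B,\ p\nmid\Delta(E_{a,b}),\ a_p(E_{a,b})=t_p\bigr\}.$$
Because $N$ is odd and every $p\in I_N$ is odd for $N$ large, $t_p$ is odd, so $t_p\neq 0$ and $p\nmid t_p$; thus each relevant $E_{a,b}$ has ordinary reduction at $p$ and $t_p^2-4p$ is a genuine imaginary quadratic discriminant, which is precisely what makes Deuring's theorem applicable with no supersingular exceptions.

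For a fixed $p\in I_N$ I would sort the pairs $(a,b)$ by their residue class modulo $p$. By Deuring's theorem the number of classes $(a_0,b_0)\bmod p$ with $4a_0^3+27b_0^2\not\equiv 0$ and $a_p(E_{a_0,b_0})=t_p$ equals $\tfrac{p-1}{2}H(4p-t_p^2)$, up to the $O(1)$ classes with $j\in\{0,1728\}$, and these classes form a union of $H(4p-t_p^2)$ twist orbits of size $\tfrac{p-1}{2}$, the orbit of $(a^\ast,b^\ast)$ being $\{(u^4a^\ast,u^6b^\ast):u\in\F_p^\ast\}$. Counting the points of the box $[-A,A]\times[-B,B]$ in each class by Poisson summation, and using Weil-type bounds for the resulting complete exponential sums $\sum_{u\bmod p}e_p(\alpha u^4+\beta u^6)$, one obtains
$$\#\bigl\{(a,b):|a|\le A,\ |b|\le B,\ a_p(E_{a,b})=t_p\bigr\}=\frac{4AB}{p^2}\cdot\frac{p-1}{2}H(4p-t_p^2)+\mathrm{error}_p,$$
i.e.\ the residue classes carrying the prescribed trace equidistribute in the box. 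The pairs with $\Delta(E_{a,b})=0$, with $p\in\{2,3\}$, and with $j\in\{0,1728\}$ contribute $O\bigl(N^{3/2}/\log N+(A+B)\sqrt N/\log N\bigr)$ altogether, which is admissible in the stated range. After summing over $p\in I_N$ the theorem is reduced to evaluating $2AB\sum_{p\in I_N}H(4p-t_p^2)/p$, in which $4p-t_p^2=4N-(p-N-1)^2$.

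For this sum I would expand the Hurwitz class number by the Dirichlet class number formula $H(D)=\tfrac{\sqrt D}{\pi}\sum_{f^2\mid D}\tfrac1f L(1,\chi_{-D/f^2})$, truncate each $L$-series, and observe that — once the smooth archimedean weight $\sqrt{4p-t_p^2}/p$ has been stripped off by partial summation — every remaining term is a weighted count of primes $p\in I_N$ in a fixed arithmetic progression, the Kronecker symbols $\chi_{-D/f^2}$ depending on $p$ only through its residue modulo a bounded modulus $q$. I would then invoke Conjecture~\ref{conj_1} with $X\asymp N$, $Y=|I_N|\asymp\sqrt N\asymp X^{1/2}$ (so that any $\eta\le\tfrac12$, in particular the hypothesised $\eta<\tfrac12$, lies in its range) and with moduli up to $Q\asymp Y/(\log X)^{\beta}$: this replaces each increment $\theta(X+Y;q,a)-\theta(X;q,a)$ by $Y/\phi(q)$ with an error controlled in mean square by $\ll_{\eta,\beta}YQ\log X$, while the case $q=1$ simultaneously furnishes the short-interval prime number theorem $\sum_{p\in I_N}\log p\sim|I_N|$. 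Reassembling the surviving main terms, the sums over $f$ and over the $L$-series combine via Mertens-type estimates into the Euler product $K(N)$ of \eqref{eq_1.5} — the factor over $p\mid N$ coming from the conductor part of the class number formula, the factor over $p\nmid N$ from the $L$-value, and the Kronecker symbol $(\tfrac{N-1}{p})$ entering because $t_p\equiv -(N-1)\bmod p$ — and dividing by $\#\C(A,B)=4AB+O(A+B)$ yields $\dfrac{K(N)N}{\phi(N)\log N}+O\bigl((\log N)^{-1-\gamma}\bigr)$.

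The step I expect to be the main obstacle is the simultaneous control of all the error terms within the narrow range claimed. On one side, $\mathrm{error}_p$ must stay below the main term even when $A$ or $B$ is small compared with $p\asymp N$, so that the trivial bound over the $\asymp p^{3/2}$ relevant residue classes is hopeless and one genuinely needs the equidistribution of $\{(a,b)\bmod p:a_p(E_{a,b})=t_p\}$ in the box, carefully coupled with the average behaviour of $H(4p-t_p^2)$ over $p\in I_N$. On the other side, the class-number-formula expansion must be arranged so that only moduli $q\le Q\asymp\sqrt N/(\log N)^{\beta}$ ever appear as arguments of the Barban--Davenport--Halberstam average (one cannot afford the moduli $f^2$ themselves), the truncation height of the $L$-series must be chosen to balance its P\'olya--Vinogradov tail against that average, and every power of $\log N$ must be tracked so that the lattice-point error, the $L$-series tail, and the mean-square error from Conjecture~\ref{conj_1} are all $\ll AB/(\log N)^{2+\gamma}$. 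It is exactly this balancing that forces the conditions $A,B\ge\sqrt N(\log N)^{1+\gamma}\log\log N$ and $AB\ge N^{3/2}(\log N)^{2+\gamma}\log\log N$.
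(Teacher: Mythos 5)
Note first that the paper you are reading does not prove Theorem \ref{thmm_1}: it is quoted from David and Smith \cite{4} (see also the corrigendum \cite{5}), and the only piece of it the paper actually uses is the class-number average (\ref{eq_13}), imported from [Theorem 3, \cite{4}]. Measured against that original argument, your outline follows essentially the same route: reduce via Hasse and Deuring to counting, for each prime $p$ with $N^-<p<N^+$, the pairs $(a,b)$ in the box whose reduction mod $p$ has trace $t_p=p+1-N$ (your observation that $N$ odd forces $t_p$ odd, hence ordinary reduction, is the right way to rule out supersingular exceptions); extract the main term proportional to $AB\,H(D_N(p))/p$ per prime (your Hurwitz-normalised $H(4p-t_p^2)$ is twice the Kronecker class number $H(D_N(p))$ of (\ref{eq_2.1}), and your factors of $2$ are internally consistent, matching (\ref{eq_001})); then evaluate $\sum_{N^-<p<N^+}H(D_N(p))/p$ by the class number formula, truncated $L$-series, and Conjecture \ref{conj_1} applied with $X\asymp N$, $Y\asymp\sqrt N\asymp X^{1/2}$, which is exactly why an exponent $\eta<\tfrac12$ suffices. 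The one genuine methodological difference is in the box-counting step: David--Smith handle it with the elementary Fouvry--Murty partition into $p\times p$ boxes \cite{7}, whereas you propose Poisson summation plus Weil bounds for $\sum_u e_p(\alpha u^4+\beta u^6)$ along each twist orbit, which is closer in spirit to the character-sum treatment of Proposition \ref{lm_0.1} in this paper and to Parks \cite{20,21}. Your instinct that a naive per-class $O(1)$ error is hopeless here is correct, and your variant is quantitatively adequate: per orbit it yields an error $O\bigl((A+B)p^{-1/2}\log p+p^{1/2}(\log p)^2\bigr)$, and after summing over the roughly $H(D_N(p))$ orbits and over $p$ (using Lemma \ref{lm_1.1}(a) and $N/\phi(N)\ll\log\log N$), the hypotheses $A,B\ge\sqrt N(\log N)^{1+\gamma}\log\log N$ and $AB\ge N^{3/2}(\log N)^{2+\gamma}\log\log N$ are precisely what absorbs it.

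The caveat is that the second half of your argument, where the real work of \cite{4} lies, remains a plan rather than a proof: producing the Euler product $K(N)$ of (\ref{eq_1.5}) -- including the local factors at $p\mid N$ and the squared symbol $\left(\frac{N-1}{p}\right)^2$ -- together with the error $O\bigl((\log N)^{-1-\gamma}\bigr)$ requires the detailed bookkeeping of which moduli (of size up to the $L$-series truncation times $f^2$, not a bounded modulus as one of your sentences suggests) enter the Barban--Davenport--Halberstam average, a separate treatment of the large-$f$ tail of (\ref{eq_2.1}), and the local density computations that assemble the main term. You identify these correctly as the obstacles, but at that point your proposal is an accurate description of the proof in \cite{4} rather than a reconstruction of it.
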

Also, by taking another average over $N\le x$, a similar result was unconditionally proven by Chandee, David, Koukoulopoulos and Smith \cite{3}.\par
Improving a result of Martin, Pollack and Smith \cite{9}, in a work with Balasubramanian \cite{10}, we showed that the function $\frac{K(N)N}{\phi(N)}$ is $1$ on an average and the average approaches $1$ reasonably fast. \par

Using an approach first used by Banks and Shparlinski \cite{18}, Balog, Cojocaru and David \cite{1}, Akbary and Felix \cite{19}, in \cite{21} Parks proved that the average result in \emph{Theorem \ref{thmm_1}} is true even if we significantly relax the lower bound conditions on $A$ and $B$. To be precise, he proved
\begin{theoremm}\label{Par}
 Let $\epsilon, \, \gamma >0$ and assume for intervals of length $N^{\eta}$ that the Barban- Davenport-Halberstam Conjecture holds for
\begin{align*}\eta=\frac{1}{2}-(\gamma+2)&\frac{\log \log N}{\log N}.
\intertext{Suppose further that}
\exp({N^{\frac{\epsilon^2}{20}}})\gg A, B>N^{\epsilon} &\text{and } AB>N^{\frac{3}{2}}(\log N)^{6+2\gamma}\log \log N.
\intertext{Then, for any odd integer $N$,}
\frac{1}{\C(A,B)}\sum_{E\in \C(A,B)}M_E(N)&=\frac{K(N)N}{\phi(N)\log N}+O(\frac{1}{(\log N)^{1+\gamma}}),
\end{align*}
where $K(N)$ is given in (\ref{eq_1.5}).
\end{theoremm}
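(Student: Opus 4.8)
\emph{Proof sketch.} The plan is to interchange the average over the family with the sum over primes, to identify the resulting count of curves over $\F_p$ with a prescribed trace of Frobenius with a Hurwitz class number, to extract the main term exactly as in the proof of \emph{Theorem \ref{thmm_1}} (which is where Conjecture \ref{conj_1} is used), and then to absorb the new error — which appears because the box $|a|\le A$, $|b|\le B$ can be far shorter than $p$ — by exhibiting cancellation in character sums over the coefficient space of the curves. By Hasse's bound, $|E_{a,b}(\F_p)|=N$ forces $p$ to lie in the short interval $I_N=\{p\text{ prime}:(\sqrt p-1)^2\le N\le(\sqrt p+1)^2\}$, which has length $\asymp N^{1/2}$, and forces $a_p(E_{a,b})=p+1-N=:t_p$; hence $\sum_{E\in\C(A,B)}M_E(N)=\sum_{p\in I_N}W(p)$ with $W(p)=\#\{(a,b):|a|\le A,\ |b|\le B,\ a_p(E_{a,b})=t_p\}$. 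For every $p\in I_N$ one has $\gcd(t_p,p)=1$ apart from the possible prime $p=N-1$, and then the number of pairs $(\alpha,\beta)\bmod p$ of nonzero discriminant with $a_p(E_{\alpha,\beta})=t_p$ equals $(p-1)\,H(4p-t_p^2)$, where $H$ is the Hurwitz class number; the prime $p=N-1$, if it occurs, contributes only supersingular curves, of which there are $\ll p^{3/2+o(1)}$, so its contribution to $\sum_p W(p)$ is $O\!\big(\#\C(A,B)\,N^{-1/2+o(1)}\big)$ and is harmless.

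Write $W(p)=\dfrac{(2A+1)(2B+1)(p-1)}{p^2}\,H(4p-t_p^2)+\mathcal E(p)$, where $\mathcal E(p)$ measures the failure of $\{(\alpha,\beta):a_p(E_{\alpha,\beta})=t_p\}$ to be equidistributed in the box. Since $\#\C(A,B)=(2A+1)(2B+1)+O(\sqrt A)$, the first term summed over $p\in I_N$ equals $\#\C(A,B)\sum_{p\in I_N}\dfrac{H(4p-t_p^2)}{p}$ up to a negligible amount, and from here the computation is identical to that of \emph{Theorem \ref{thmm_1}}: one opens $H(4p-t_p^2)$ via the Dirichlet class number formula as $\tfrac1\pi\sqrt{4p-t_p^2}$ times an $L$-value at $1$, expands the latter as a Dirichlet series whose coefficients depend on $p$ only through congruences to small squarefull moduli, exchanges this with the sum over $p\in I_N$, treats the diagonal by the prime number theorem in $I_N$ and the off-diagonal by Conjecture \ref{conj_1} with $Y\asymp|I_N|\asymp N^{1/2}$ and $Q$ in the admissible window $Y/(\log N)^\beta\le Q\le Y$, and reassembles the Euler product $K(N)$. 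The choice $\eta=\tfrac12-(\gamma+2)\tfrac{\log\log N}{\log N}$ enters precisely here — it is calibrated so that $N^\eta\le|I_N|$ while keeping $Q$ in range — and the resulting savings produce the asserted main term $\dfrac{K(N)N}{\phi(N)\log N}$ with per-curve error $O((\log N)^{-1-\gamma})$; the extra powers of $\log N$ and the factor $\log\log N$ in the condition on $AB$ are exactly the truncation and large-sieve losses incurred in this short-interval setting.

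It remains to bound $\sum_{p\in I_N}\mathcal E(p)$, and this is the heart of the matter. Completing the two congruences modulo $p$ and writing $e_p(x)=e^{2\pi ix/p}$, $\widehat I_A(u)=\sum_{|a|\le A}e_p(ua)$, and $\Sigma_p(u,v;t)$ for the sum of $e_p(u\alpha+v\beta)$ over the $(\alpha,\beta)$ with $a_p(E_{\alpha,\beta})=t$, one has $\mathcal E(p)=\sum_{(u,v)\neq(0,0)}p^{-2}\,\widehat I_A(u)\widehat I_B(v)\,\Sigma_p(u,v;t_p)$. When $A,B\ge N^{1/2}$ the treatment of \emph{Theorem \ref{thmm_1}} applies unchanged: the square-root-cancellation estimate $\Sigma_p(u,v;t)\ll p^{1+o(1)}$ for $(u,v)\neq(0,0)$ (a character sum over the coefficient space of the family, controlled via the Eichler–Selberg trace formula or Deligne's bounds), together with $\sum_{u\neq0}|\widehat I_A(u)|\ll p\log(A+2)$, yields $\sum_{p\in I_N}\mathcal E(p)\ll N^{o(1)}\big(N^{1/2}(A+B)+N^{3/2}\big)$, which is $\ll\#\C(A,B)(\log N)^{-1-\gamma}$ once $AB>N^{3/2}(\log N)^{6+2\gamma}\log\log N$. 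The genuinely new case is when one of $A,B$, say $A$, lies below $N^{1/2}$; then that same condition forces $B\gg N$, so the variable $b$ runs through many complete residue systems modulo $p$, and, away from the complex-multiplication curves $a=0$ (which contribute for only $N^{o(1)}$ primes of $I_N$ and are disposed of separately), the part of $\mathcal E(p)$ with $v\neq0$ is $O\big(\sum_{|a|\le A}\nu_{t_p}(a)\big)$ with $\nu_t(a)=\#\{\beta:a_p(E_{a,\beta})=t\}\ll p^{1/2+o(1)}$, hence $O(AN^{1+o(1)})$ in total, which is negligible because $B\gg N(\log N)^{6+2\gamma}$. This leaves the one-dimensional sum $\sum_{p\in I_N}\sum_{u\neq0}p^{-2}\widehat I_A(u)\,\Sigma_p(u,0;t_p)$ — a sum over the short range $|a|\le A$ of the twisted curve-counting sums $\Sigma_p(u,0;t_p)$ — for which the pointwise bound $\Sigma_p(u,0;t_p)\ll p^{1+o(1)}$ is not enough on its own; one must extract additional cancellation from the average over the varying modulus $p\in I_N$, estimating the resulting family of incomplete exponential sums by Cauchy–Schwarz and a large sieve. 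I expect this joint $(a,p)$-average for a short box to be the main obstacle, and the point at which the proof improves on \emph{Theorem \ref{thmm_1}}; the upper bounds $A,B<\exp(N^{\epsilon^2/20})$ enter here, and in the truncation above, to keep the logarithmic factors $\log A$, $\log B$ from destroying the gain, while $A,B>N^\epsilon$ ensures $\#\C(A,B)$ is large enough for the main term to dominate.
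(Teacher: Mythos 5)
Your reduction to $\sum_{p\in I_N}H(4p-t_p^2)/p$ and the conditional evaluation of that sum via Conjecture \ref{conj_1} and the David--Smith argument is exactly how the main term is obtained here (this is (\ref{eq_13}) in the paper), and your error analysis in the regime $A,B\ge N^{1/2}$ is essentially the classical one behind Theorem \ref{thmm_1}. But the entire content of Theorem \ref{Par} beyond Theorem \ref{thmm_1} is the range $N^{\epsilon}<A<N^{1/2}$ (or the same for $B$), and at precisely that point your argument stops: after disposing of the $v\neq 0$ frequencies using $B\gg N$, you are left with $\sum_{p\in I_N}\sum_{u\neq 0}p^{-2}\widehat I_A(u)\,\Sigma_p(u,0;t_p)$, you correctly observe that the pointwise Weil-type bound $\Sigma_p(u,0;t_p)\ll p^{1+o(1)}$ loses a factor of roughly $N^{1/2}/A$ against the target $AB(\log N)^{-1-\gamma}$, and you then only say that one ``must extract additional cancellation from the average over $p$'' and that you ``expect'' this to be the main obstacle. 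No mechanism for that cancellation is given, so the proof does not close in the new range; this is a genuine gap, not a routine verification.

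The paper (following Parks) closes exactly this gap by a different device, with no additive characters at all: the congruence count $Z(P,S,T)$ of pairs $(a,b)$ in the box is expanded by orthogonality of multiplicative Dirichlet characters modulo $p$, split into the four pieces $Z_1,\dots,Z_4$ of (\ref{thefourcs}) according to which of $\chi,\chi'$ are principal, and the dangerous pieces (one character non-principal) are bounded in Lemma \ref{errorimprove} by H\"older's inequality with a free integer parameter $k$: the short character sum $\mathcal{A}(\overline{\chi})$ of length $A$ is raised to the $2k$-th power, rewritten as a sum of length $A^k$ with divisor-type coefficients $\tau_k$, and averaged over the moduli $p\in(N^-,N^+)$ by the multiplicative large sieve (Theorem \ref{largesieve}), while the piece with both characters non-principal is handled by Cauchy--Schwarz and the Friedlander--Iwaniec fourth-moment bound (Theorem \ref{fourthpower}). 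Choosing $k\asymp 1/\epsilon$ makes the large-sieve term $(A^k+N^2)^{1/2k}$ harmless as soon as $A>N^{\epsilon}$, and the attendant loss $(\log A)^{(k^2-1)/2k}$ is what forces the upper bound $A,B\ll\exp(N^{\epsilon^2/20})$ in the statement. If you want to salvage your additive-character framework, you would have to prove an analogous average (over $p\in I_N$ and $u$) bound for the incomplete sums $\Sigma_p(u,0;t_p)$ with a saving of $N^{1/2-\epsilon}$ over the trivial treatment; nothing of this strength follows from the pointwise estimates you invoke, so as written the proposal proves Theorem \ref{thmm_1} again but not Theorem \ref{Par}.
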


In an earlier work with Balasubramanian \cite{6}, we proved results related to distribution of the function $M_E(N)$. More precisely, we proved that
\begin{theoremm}\label{th_1}
 Let $\C(A,B)$ be as defined as in (\ref{eq_1.6}) and $N$ be a positive integer greater than $7$. If $L$ be a positive integer such that $A,B>N^{L/2}(\log N)^{1+\gamma}$ and $AB>N^{3L/2}(\log N)^{2+\gamma}$ for some $\gamma>0$, then for $1\le \ell\le L-1$
 \begin{align*}
  \frac{1}{\#\C(A,B)}\sum_{\underset{M_E(N)=\ell}{E\in \C(A,B)}}1=\frac{1}{\ell!}\left(\frac{1}{\#\C(A,B)}\sum_{E\in \C(A,B)}M_E(N)\right)^\ell\left(1+O\left(\frac{N}{\phi(N)\log N}\right)\right)+O\left(\frac{1}{N^{\frac{L-\ell}{2}}(\log N)^{\gamma}}\right),
 \end{align*}
where the \textquoteleft$O$\textquoteright constant in the last error term is independent of $\gamma$.
 \end{theoremm}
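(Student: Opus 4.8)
\medskip
\noindent\textbf{Plan.}
Write $\lambda:=\frac{1}{\#\C(A,B)}\sum_{E\in\C(A,B)}M_E(N)$ for the mean of $M_E(N)$ over the family. The plan is to run the method of moments, showing that averaged over $\C(A,B)$ the integer $M_E(N)$ behaves like a Poisson variable of (small) parameter $\lambda$: by the first-moment analysis behind \emph{Theorem~\ref{thmm_1}} one has $\lambda\asymp K(N)N/(\phi(N)\log N)\ll 1/\log N$, so $e^{-\lambda}=1+O(N/(\phi(N)\log N))$. Hasse's bound forces any prime $p$ with $|E_p(\F_p)|=N$ to lie in $I_N:=[(\sqrt N-1)^2,(\sqrt N+1)^2]$, so $M_E(N)=\sum_{p\in I_N}\mathbf 1[\,p\nmid\Delta_E,\ |E_p(\F_p)|=N\,]$ takes values in a bounded range. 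From the identity $\mathbf 1[M_E(N)=\ell]=\sum_{k\ge\ell}(-1)^{k-\ell}\binom{k}{\ell}\binom{M_E(N)}{k}$ and the Bonferroni inequality --- truncating at $k=L-1$, so the tail is at most the first omitted term --- I get
\[
\frac{1}{\#\C(A,B)}\sum_{\substack{E\in\C(A,B)\\ M_E(N)=\ell}}1=\sum_{k=\ell}^{L-1}(-1)^{k-\ell}\binom{k}{\ell}B_k+O\!\left(\binom{L}{\ell}B_L\right),
\]
where $B_k:=\frac{1}{\#\C(A,B)}\sum_{E\in\C(A,B)}\binom{M_E(N)}{k}$, so the task reduces to evaluating the binomial moments $B_k$ for $\ell\le k\le L$.

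\medskip
\noindent\textbf{The binomial moments.}
Here $\binom{M_E(N)}{k}$ counts the $k$-element sets $\{p_1<\dots<p_k\}\subset I_N$ of good-reduction primes with $|E_{p_i}(\F_{p_i})|=N$ for every $i$; interchanging summations and fixing distinct $p_1,\dots,p_k\in I_N$, set $P=p_1\cdots p_k$ (so $P\asymp N^k$) and $w(p_i):=\#\{(a,b)\bmod p_i:|E_{a,b}(\F_{p_i})|=N\}$. By the Chinese Remainder Theorem these conditions cut out $\prod_i w(p_i)$ residue classes of $(a,b)$ modulo $P$, and the key analytic input is the quasi-equidistribution estimate
\[
\#\bigl\{(a,b):|a|\le A,\ |b|\le B,\ \Delta(E_{a,b})\neq0,\ |E_{a,b}(\F_{p_i})|=N\ \forall i\bigr\}=\frac{\#\C(A,B)}{P^2}\prod_{i=1}^{k}w(p_i)+O(\mathrm{Err}_k),
\]
whose error $\mathrm{Err}_k=\mathrm{Err}_k(p_1,\dots,p_k)$, once summed over the $\ll(\sqrt N/\log N)^k$ prime $k$-tuples, is $\ll N^{-(L-\ell)/2}(\log N)^{-\gamma}$ in the stated range of $A,B$; this gives $B_k=\sum_{\{p_1<\dots<p_k\}\subset I_N}\prod_i w(p_i)/p_i^2+O\bigl(N^{-(L-\ell)/2}(\log N)^{-\gamma}\bigr)$. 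The estimate is the natural $k$-fold extension of the one powering \emph{Theorem~\ref{thmm_1}}: via Deuring's theorem $w(p)$ is a multiple of the Hurwitz--Kronecker class number $H\bigl(4p-(p+1-N)^2\bigr)$ (up to the standard $j\in\{0,1728\}$ corrections), which gives $w(p)/p^2\ll(\log p)/\sqrt p$; and the congruence conditions are separated by a Fourier expansion modulo $P$, the zero frequency producing the main term while the remaining frequencies are bounded by Weil-type estimates for the complete exponential sums over $\F_{p_i}^2$ attached to $y^2=x^3+ax+b$ combined with Barban--Davenport--Halberstam-type averaging. The thresholds $A,B>N^{L/2}(\log N)^{1+\gamma}$ and $AB>N^{3L/2}(\log N)^{2+\gamma}$ are exactly what make $\mathrm{Err}_k$ negligible for $k\le L$, since $P$ can be as large as $N^L$ while $A,B$ may be only $\asymp N^{L/2}$.

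\medskip
\noindent\textbf{Summation over primes and reassembly.}
Set $v_p:=w(p)/p^2$. One has $\sum_{\{p_1<\dots<p_k\}\subset I_N}\prod_i v_{p_i}=\frac1{k!}\bigl(\sum_{p\in I_N}v_p\bigr)^k-\frac1{k!}(\text{diagonal terms})$, where the diagonal terms (ordered $k$-tuples with a repeated prime) are $\ll_k\bigl(\sum_p v_p^2\bigr)\bigl(\sum_p v_p\bigr)^{k-2}\ll_k\frac{\log N}{\sqrt N}\,\lambda^{k-1}\ll\lambda^k\cdot\frac{N}{\phi(N)\log N}$, using $v_p\ll(\log p)/\sqrt p$ and $\lambda\asymp 1/\log N$. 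The $k=1$ case of the estimate above gives $\sum_{p\in I_N}v_p=\lambda+O\bigl(N^{-(L-1)/2}(\log N)^{-\gamma}\bigr)$, hence $\bigl(\sum_{p\in I_N}v_p\bigr)^k=\lambda^k\bigl(1+O(N/(\phi(N)\log N))\bigr)$, so
\[
B_k=\frac{\lambda^k}{k!}\Bigl(1+O\bigl(N/(\phi(N)\log N)\bigr)\Bigr)+O\bigl(N^{-(L-\ell)/2}(\log N)^{-\gamma}\bigr)\qquad(\ell\le k\le L),
\]
the last error being $\gamma$-independent as $\gamma$ enters only through the size hypotheses on $A,B$. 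Substituting, $\sum_{k=\ell}^{L-1}(-1)^{k-\ell}\binom{k}{\ell}\frac{\lambda^k}{k!}=\frac{\lambda^\ell}{\ell!}\sum_{j=0}^{L-1-\ell}\frac{(-\lambda)^j}{j!}=\frac{\lambda^\ell}{\ell!}\bigl(e^{-\lambda}+O(\lambda^{L-\ell})\bigr)=\frac{\lambda^\ell}{\ell!}\bigl(1+O(N/(\phi(N)\log N))\bigr)$ (using $\ell\le L-1$ and $\lambda=o(1)$), while the truncation remainder obeys $\binom{L}{\ell}B_L\ll\frac{\lambda^\ell}{\ell!}\cdot\frac{N}{\phi(N)\log N}+N^{-(L-\ell)/2}(\log N)^{-\gamma}$ and is absorbed. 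This yields the asserted formula.

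\medskip
\noindent\textbf{Main obstacle.}
The crux is the quasi-equidistribution estimate of the second step. Since every prime contributing to $M_E(N)$ lies within $O(\sqrt N)$ of $N$, the modulus $P$ is as large as $N^L$ while $A,B$ may each be only $\asymp N^{L/2}$, so a naive lattice-point count in the box $[-A,A]\times[-B,B]$ is far too lossy --- the $O(1)$ boundary error from each of the $\prod_i w(p_i)\asymp N^{3k/2}$ residue classes already swamps the main term. One must genuinely extract cancellation from the exponential sums attached to the family $y^2=x^3+ax+b$ and, simultaneously, pin down the density $w(p_i)$ through Deuring's theorem --- the same threshold phenomenon that makes \emph{Theorem~\ref{thmm_1}} delicate, here carried out uniformly over all products of up to $L$ of the relevant primes. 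Everything downstream is routine Poisson/Bonferroni bookkeeping.
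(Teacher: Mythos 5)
Your combinatorial frame (the identity $\mathbf 1[M_E(N)=\ell]=\sum_{k\ge\ell}(-1)^{k-\ell}\binom{k}{\ell}\binom{M_E(N)}{k}$, Bonferroni truncation at $k=L$, then reassembling distinct-prime sums into powers of $\sum_p w(p)/p^2$) is sound and is essentially equivalent to the Stirling-number bookkeeping used in Section 3 via the relations (\ref{eq_4}) and Lemma \ref{lm_2}. The gap is that the entire analytic content of Theorem \ref{th_1} sits in the ``quasi-equidistribution estimate'' which you state but do not prove, and the one-line mechanism you offer for it would not deliver it. For $k$ near $L$ the modulus is $P=p_1\cdots p_k\asymp N^k$ and the number of prescribed residue classes $(a,b)$ modulo $P$ is $\prod_i w(p_i)$, which by (\ref{eq_001}) is of size $N^{3k/2}$ up to powers of $\log N$; at $k=L$ this is of the same order as $AB$ under the stated hypotheses, so an $O(1)$ loss per class (or per incomplete box) already swamps the main term, and one must prove cancellation uniformly for all $k\le L$ strong enough that the summed error is $\ll N^{-(L-\ell)/2}(\log N)^{-\gamma}$. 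You correctly identify this as the crux, but then dispose of it by appealing to ``Fourier expansion modulo $P$, Weil-type estimates for the exponential sums attached to $y^2=x^3+ax+b$, and Barban--Davenport--Halberstam-type averaging''. That is not how this count is done in this circle of papers, and it is not clear it can be: a Weil bound does not see the condition $\#E_{s,t}(\F_p)=N$, whose detection is exactly what brings the class-number densities of (\ref{eq_001}) into play. The cited source \cite{6} proves the needed count by the Fouvry--Murty device \cite{7} of tiling the $A\times B$ rectangle by boxes of side $p_1\cdots p_\ell$ combined with the Chinese remainder theorem (as the introduction here recalls), while the present paper replaces that by expanding the congruences in multiplicative characters and invoking the large sieve (Theorem \ref{largesieve}) and the Friedlander--Iwaniec bound (Theorem \ref{fourthpower}); see Proposition \ref{lm_0.1} and Lemma \ref{errorimprove}. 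Without carrying out some such argument and checking the resulting error against the thresholds $A,B>N^{L/2}(\log N)^{1+\gamma}$ and $AB>N^{3L/2}(\log N)^{2+\gamma}$, your bound on $\mathrm{Err}_k$ is an assumption, not a proof, and it is precisely the statement being asked for.

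Two further points. Theorem \ref{th_1} is unconditional, but your normalization $\lambda\asymp K(N)N/(\phi(N)\log N)$ invokes Theorem \ref{thmm_1}, which assumes Conjecture \ref{conj_1}; the Barban--Davenport--Halberstam input enters this subject only at the stage (\ref{eq_13}), i.e.\ in identifying the main term, and has no role in Theorem \ref{th_1}. Unconditionally you may only use the upper bound $\lambda\ll N/(\phi(N)\log N)$ coming from Lemma \ref{lm_1.1}(a); no lower bound of the right order is known (one cannot even guarantee a prime in the Hasse interval). This matters where you use $\lambda\asymp 1/\log N$: for instance, at $k=\ell$ your diagonal term is of size roughly $\lambda^{\ell-1}N^{-1/2+o(1)}$, which for $L-\ell\ge 2$ is too large for the additive error $N^{-(L-\ell)/2}(\log N)^{-\gamma}$ and can only be absorbed into the factor $1+O(N/(\phi(N)\log N))$ if one has a lower bound on $\lambda$; that step needs to be reorganized so that no such bound is required.
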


Using an approach similar to Parks \cite{20,21}, in this paper, we improve \emph{Theorem \ref{th_1}} as follows:

\begin{theorem}\label{Th_01}
 Let $0<\epsilon<1$ be a small positive number and $\ell$ be a positive integer. Suppose $\frac{\log N}{\log \log N}\ge \frac{20\ell}{\epsilon^2}$ and $\exp\left(N^{\frac{\epsilon^2}{20\ell}}\right)\gg A,B>N^\epsilon$ and $AB>N^{\frac{3\ell}{2}}(\log N)^{6+2\gamma}(\log \log N)^\frac{\ell}{2}$, then
\begin{align*}
  \frac{1}{\C(A,B)}\sum_{\underset{M_E(N)=\ell}{E\in \C(A,B)}}1=\frac{1}{\ell!}\left(\frac{1}{\C(A,B)}\sum_{E\in \C(A,B)}M_E(N)\right)^\ell\left(1+O\left(\frac{N}{\phi(N)\log N}\right)\right)+O\left(\frac{1}{(\log N)^{\ell+\gamma}}\right),
 \end{align*}
where the \textquoteleft$O$\textquoteright constant in the last error term is independent of $\gamma$.
 \end{theorem}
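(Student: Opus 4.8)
The plan is to run a factorial–moment argument like the one behind \emph{Theorem \ref{th_1}} in \cite{6}, but to replace the lattice–point count in a box of side $\asymp N^{\ell/2}$ used there by a completion–and–cancellation argument in the spirit of Banks--Shparlinski \cite{18} and Parks \cite{20,21}, valid in the much shorter range $N^{\epsilon}<A,B<\exp\bigl(N^{\epsilon^{2}/(20\ell)}\bigr)$.

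\emph{Step 1: reduction to moments.} Since $|E_{a,b}(\F_p)|=N$ forces $|p+1-N|\le 2\sqrt p$, every prime counted by $M_E(N)$ lies in the Hasse window $\mathcal I_N=\{p\ \mathrm{prime}:(\sqrt p-1)^2\le N\le(\sqrt p+1)^2\}$, so $p=N+O(\sqrt N)$ and $\#\mathcal I_N\asymp\sqrt N/\log N$. For $k\ge 1$ set
\[
\mathcal M_k=\frac{1}{\#\C(A,B)}\sum_{E\in\C(A,B)}\binom{M_E(N)}{k}=\frac{1}{\#\C(A,B)}\sum_{\substack{p_1<\cdots<p_k\\ p_i\in\mathcal I_N}}\#\bigl\{E_{a,b}\in\C(A,B):|E_{a,b}(\F_{p_i})|=N\ (1\le i\le k)\bigr\}.
\]
By Bonferroni's inequalities, for any fixed $K\ge\ell$,
\[
\frac{1}{\#\C(A,B)}\#\{E:M_E(N)=\ell\}=\sum_{m=\ell}^{K}(-1)^{m-\ell}\binom{m}{\ell}\mathcal M_m+O\!\Bigl(\binom{K+1}{\ell}\mathcal M_{K+1}\Bigr),
\]
and with $\mu=\frac{1}{\#\C(A,B)}\sum_{E}M_E(N)$ the main sum equals $\frac{\mu^{\ell}}{\ell!}\sum_{j=0}^{K-\ell}\frac{(-\mu)^{j}}{j!}+\cdots=\frac{\mu^{\ell}}{\ell!}\bigl(1+O(\mu)\bigr)+\cdots$, where $O(\mu)=O(N/(\phi(N)\log N))$ because $K(N)\ll 1$ and $\mu=\frac{K(N)N}{\phi(N)\log N}+O((\log N)^{-1-\gamma})\asymp(\log N)^{-1+o(1)}$ by \emph{Theorem \ref{Par}}. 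Hence it suffices to take $K=\lceil\ell+\gamma\rceil$ and to prove, for $\ell\le m\le K$, that $\mathcal M_m=\frac{\mu^m}{m!}\bigl(1+O((\log N)^{-\gamma})\bigr)$, together with the crude bound $\mathcal M_{K+1}\ll(\log N)^{-K-1+o(1)}$; the truncation then costs $\ll(\log N)^{-\ell-\gamma}$, and uniformity of the constant in $\gamma$ is inherited from \emph{Theorem \ref{Par}}.

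\emph{Step 2: the $k$-prime count and its main term.} Fix distinct $p_1<\cdots<p_k$ in $\mathcal I_N$, put $P=p_1\cdots p_k$, and let $\mathcal S_{p}\subseteq(\Z/p\Z)^{2}$ be the set of $(a,b)$ with $\Delta(E_{a,b})\not\equiv 0$ and $|E_{a,b}(\F_{p})|=N$. By Deuring's theorem in the quantitative form used by Schoof and by David--Smith, $|\mathcal S_{p}|$ is expressed through the Hurwitz--Kronecker class number $H\bigl(4p-(p+1-N)^{2}\bigr)$; in particular $|\mathcal S_p|\ll p^{3/2}\log p$, and $\sum_{p\in\mathcal I_N}|\mathcal S_p|/p^{2}$ reproduces $\mu$ up to the admissible error. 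By the Chinese Remainder Theorem the inner count is the number of $(a,b)$ with $|a|\le A$, $|b|\le B$ and $(a\bmod P,b\bmod P)\in\mathcal S:=\prod_i\mathcal S_{p_i}$, and completion (Poisson summation) gives, writing $e_q(x)=e^{2\pi i x/q}$,
\[
\#\{(a,b):\dots\}=\frac{4AB\,|\mathcal S|}{P^{2}}+\frac{1}{P^{2}}\sum_{\substack{(u,v)\in(\Z/P\Z)^{2}\\ (u,v)\neq(0,0)}}\widehat{\mathcal S}(u,v)\,\widehat{I}_A(u)\,\widehat{I}_B(v),
\]
where $\widehat{I}_A(u)=\sum_{|a|\le A}e_P(ua)\ll\min(A,\|u/P\|^{-1})$ (similarly $\widehat{I}_B$), and $\widehat{\mathcal S}(u,v)$ factors over the $p_i$ into local sums $\widehat{\mathcal S}_{p_i}$. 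Summing $4AB|\mathcal S|/P^{2}$ over $p_1<\cdots<p_k$, using $\#\C(A,B)=4AB+O(A+B)$ and completing the restricted sum to $\frac{1}{k!}$ times a $k$-th power at negligible cost, one obtains $\mathcal M_k=\frac{1}{k!}\bigl(\sum_{p\in\mathcal I_N}|\mathcal S_p|/p^{2}\bigr)^{k}+(\text{smaller})=\frac{\mu^{k}}{k!}+(\text{smaller})$, the required leading behaviour. It remains to bound the error term in the last display.

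\emph{Step 3: cancellation, and the main obstacle.} The local factor $\widehat{\mathcal S}_p(u,v)=\sum_{(a,b)\in\mathcal S_p}e_p(ua+vb)$ twists the indicator of ``$E_{a,b}$ has trace $p+1-N$ over $\F_p$''. Splitting $\mathcal S_p$ into its $\approx H\bigl(4p-(p+1-N)^{2}\bigr)$ $\F_p$–isomorphism orbits (of size $(p-1)/2$ when $j\neq 0,1728$) turns each orbit's contribution into a one–variable additive character sum along a curve of degree $\le 6$, so Weil's bound gives $\widehat{\mathcal S}_p(u,v)\ll p\log p$ for $(u,v)\not\equiv(0,0)\bmod p$, against the trivial $\widehat{\mathcal S}_p(u,v)=|\mathcal S_p|\asymp p^{3/2}$ for $p\mid\gcd(u,v)$. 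Feeding these into the completion identity, organising $\sum_{(u,v)\neq(0,0)}$ according to the proper subset of indices $i$ with $p_i\mid\gcd(u,v)$, and summing over $p_i\in\mathcal I_N$, one finds the error negligible for large boxes; but once $A$ or $B$ is as small as $N^{\epsilon}$ — far below $P\asymp N^{k}$ — a naive estimate both loses a factor $(\log N)^{O(m)}$ at moment level $m$ and grows like $N^{3m/2}/AB$ with $m$, which the lower bound $AB>N^{3\ell/2}(\log N)^{6+2\gamma}(\log\log N)^{\ell/2}$ does not control. Overcoming this is the heart of the matter, and the point at which the argument leaves \cite{6}: following Parks \cite{20,21}, one (i) first removes the pairs $(a,b)$ for which $a$ or $b$ is atypically smooth or has an atypically large prime factor, bounding their contribution by cruder divisor–sum and sieve estimates — the upper restriction $A,B\le\exp(N^{\epsilon^{2}/(20\ell)})$ being exactly what keeps auxiliary quantities such as $\omega(ab)$ and the attendant divisor sums below an arbitrarily small power of $N$ — and (ii) on the remaining pairs extracts the Weil cancellation one prime at a time, using that, under the Barban--Davenport--Halberstam hypothesis of \emph{Conjecture \ref{conj_1}} with $\eta$ just below $\tfrac12$ (as in \emph{Theorem \ref{Par}}), the interval $\mathcal I_N$ is long enough to behave as an almost complete residue system for the moduli that appear; this averaging replaces the individual class–number (and $L(1,\chi)$) factors by their averages, downgrading the $(\log N)^{O(m)}$ losses to $(\log\log N)^{O(m)}$ losses — whence the factor $(\log\log N)^{\ell/2}$ in the hypothesis — and keeps the error from blowing up across moment levels, so that the single condition $AB>N^{3\ell/2}(\log N)^{6+2\gamma}(\log\log N)^{\ell/2}$ suffices for every $\mathcal M_m$ needed in Step 1. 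Making reductions (i)–(ii) interact correctly with the multiplicativity of $\widehat{\mathcal S}$ across the $k$ primes is the technical core, and the quantitative hypothesis $\log N/\log\log N\ge 20\ell/\epsilon^{2}$ is precisely what makes this scheme close; it then yields the stated estimate with error $O\bigl((\log N)^{-\ell-\gamma}\bigr)$, the implied constant independent of $\gamma$.
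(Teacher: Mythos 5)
Your Step 3 is where the theorem actually lives, and it is not an argument. The paper's proof of Theorem \ref{Th_01} reduces everything to Proposition \ref{lm_0.1}, and that proposition is proved by detecting the congruences $a\equiv s_iu_i^4$, $b\equiv t_iu_i^6 \pmod{p_i}$ with \emph{multiplicative} Dirichlet characters, and then estimating the incomplete character sums $\mathcal{A}(\chi)=\sum_{|a|\le A}\chi(a)$, $\mathcal{B}(\chi)=\sum_{|b|\le B}\chi(b)$ on average over the primes and characters: H\"older with exponent $2k$ and the choice $k=2\ell/\epsilon$, the multiplicative large sieve (Theorem \ref{largesieve}) applied to $\tau_k$-weighted sums of length $B^k$ over all primitive characters of modulus up to $N^\ell$, and the Friedlander--Iwaniec fourth-moment bound (Theorem \ref{fourthpower}) for the term where both character components are non-principal. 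That machinery is precisely what generates the hypotheses $A,B>N^\epsilon$, $A,B\ll\exp\bigl(N^{\epsilon^2/(20\ell)}\bigr)$ and $\log N/\log\log N\ge 20\ell/\epsilon^2$ (the factor $(\log B)^{(k^2-1)/2k}$ must be beaten by $N^{-\ell/4k}$). Your additive-character/Poisson completion cannot do this as stated: for a single prime $p\asymp N$ and a box with $A,B\approx N^\epsilon\ll p$, even granting the Weil-type bound $\widehat{\mathcal S}_p(u,v)\ll p\log p$, the completed error is of size roughly $H(D_N(p))\sqrt{p}(\log p)^2$, which dwarfs the main term $\asymp AB\,H(D_N(p))/p$; the loss sits in the incomplete sums over $a$ and $b$, not in the sum over $p$. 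Your proposed repairs --- removing smooth or rough $(a,b)$ and invoking Conjecture \ref{conj_1} so that $\mathcal{I}_N$ ``behaves as an almost complete residue system'' and class numbers can be replaced by their averages --- are not carried out and do not address this loss; nothing in the paper (or in Parks \cite{20,21}, whose method is the multiplicative-character one) works that way. So the technical core of the proof is missing.

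Two further problems. First, Theorem \ref{Th_01} is unconditional, but your argument is conditional: you use Theorem \ref{Par} (which assumes Conjecture \ref{conj_1} in short intervals) to control $\mu$, and you use Conjecture \ref{conj_1} again inside the error analysis. In the paper BDH enters only in Theorem \ref{Th_03}, to identify $\sum_{N^-<p<N^+}H(D_N(p))/p$ with $K(N)N/(\phi(N)\log N)$; for Theorem \ref{Th_01} the unconditional upper bound of Lemma \ref{lm_1.1}(a) is all that is needed, since both sides of the statement are expressed through the same quantity $\sum_p H(D_N(p))/p$. Second, your claim $\mathcal{M}_m=\frac{\mu^m}{m!}\bigl(1+O((\log N)^{-\gamma})\bigr)$ is a relative-error statement and would require a lower bound on $\mu$, which is not available (and is not what the method gives); the correct form, as in Proposition \ref{lm_0.1} and in the statement of the theorem, carries an additive error $O\bigl((\log N)^{-m-\gamma}\bigr)$. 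Your Step 1 (Bonferroni/factorial moments) is essentially the paper's manipulation with $\Upsilon$, $\omega$, $\Omega$ and Lemma \ref{lm_2} and is fine, but note also that the upper restriction on $A,B$ has nothing to do with controlling $\omega(ab)$ or divisor sums of $ab$; it comes from the $(\log A)^{(k^2-1)/2k}+(\log B)^{(k^2-1)/2k}$ factor in Lemma \ref{errorimprove}.
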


Alternatively, under Conjecture \ref{conj_1}, we can state the above theorem in the following form:

\begin{theorem}\label{Th_03}
  Suppose Conjecture \ref{conj_1} be true for some $\eta<\frac{1}{2}$. Let $\gamma_1$ be a non negative integer and $\gamma_2>0$. Also let $\exp\left(N^{\frac{\epsilon^2}{20(\ell+\gamma_1)}}\right)\gg A,B>N^\epsilon$ and $AB>N^{\frac{3(\ell+\gamma_1)}{2}}(\log N)^{6+2\gamma_2}(\log \log N)^\frac{\ell+\gamma_1}{2}$ for a odd positive integer $N$ with $\frac{\log N}{\log \log N}\ge \frac{20(\ell+\gamma_1)}{\epsilon^2}$. Then, for $r\le \ell$
\begin{align*}\frac{1}{\#\C(A,B)}\sum_{E\in \C(A,B)}\sum_{M_E(N)\ge \ell}M_E(N)^r=\sum_{m=\ell}^{\ell+\gamma_1}{d_{\ell,r}(m)}\left(\frac{K(N)N}{\phi(N)\log N}\right)^m+O\left(\frac{N}{\phi(N)\log N}\right)^{1+\ell+\gamma_1}&\\
+O\left(\frac{1}{(\log N)^{\ell+\gamma_2}}\right),&\end{align*}
where $\C(A,B)$ is as before and \begin{align}d_{\ell,r}(m)=\sum_{k=\ell}^m\frac{k^r}{k!}\frac{(-1)^{m-k}}{(m-k)!}.\label{eq_10}\end{align}
\end{theorem}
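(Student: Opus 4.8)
\emph{Step 1 (the combinatorial kernel).} The numbers $d_{\ell,r}(m)$ of \eqref{eq_10} are precisely the coefficients in the identity
\begin{equation}\label{comb}
 x^{r}\,\mathbf{1}_{\{x\ge\ell\}}\;=\;\sum_{m\ge\ell}m!\,d_{\ell,r}(m)\binom{x}{m}\qquad(x\in\Z_{\ge 0}),
\end{equation}
a finite sum (it terminates at $m=x$). I would prove \eqref{comb} by inverting the elementary relation $\mathbf{1}_{\{x=k\}}=\sum_{m\ge k}(-1)^{m-k}\binom{m}{k}\binom{x}{m}$, writing $x^{r}\mathbf{1}_{\{x\ge\ell\}}=\sum_{k\ge\ell}k^{r}\mathbf{1}_{\{x=k\}}$ and interchanging the two summations; the resulting inner sum over $k$ collapses to $\frac1{m!}\sum_{k=\ell}^{m}(-1)^{m-k}\binom mk k^{r}\cdot m!=m!\,d_{\ell,r}(m)$. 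It is worth recording that, formally, $\sum_{m\ge\ell}d_{\ell,r}(m)\lambda^{m}=e^{-\lambda}\sum_{k\ge\ell}\frac{k^{r}}{k!}\lambda^{k}=\mathbb{E}\!\left[X^{r}\mathbf{1}_{\{X\ge\ell\}}\right]$ for a Poisson$(\lambda)$ variable $X$: this is exactly the shape of the claimed main term, truncated to $m\le\ell+\gamma_{1}$.

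\emph{Step 2 (reduction to factorial moments; the main term).} Applying \eqref{comb} with $x=M_{E}(N)$ and averaging over $E\in\C(A,B)$ gives
\[
 \frac{1}{\#\C(A,B)}\sum_{E\in\C(A,B)}\sum_{M_{E}(N)\ge\ell}M_{E}(N)^{r}=\sum_{m\ge\ell}m!\,d_{\ell,r}(m)\,\mathcal{B}_{m},\qquad \mathcal{B}_{m}:=\frac{1}{\#\C(A,B)}\sum_{E\in\C(A,B)}\binom{M_{E}(N)}{m},
\]
so everything reduces to estimating the factorial moments $\mathcal{B}_{m}$, which I split at $m=\ell+\gamma_{1}$. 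For $1\le m\le\ell+\gamma_{1}$ the key input is the sharp asymptotic
\[
 \mathcal{B}_{m}=\frac{\lambda^{m}}{m!}+O\!\left(\frac{1}{(\log N)^{m+\gamma_{2}}}\right),\qquad \lambda:=\frac{K(N)N}{\phi(N)\log N},
\]
obtained from the same circle of ideas that underlies \emph{Theorem \ref{Th_01}}: write $\binom{M_{E}(N)}{m}=\frac1{m!}\sum^{*}\prod_{i=1}^{m}\mathbf{1}_{\{|E_{p_{i}}(\F_{p_{i}})|=N\}}$ over $m$-tuples of distinct primes $p_{i}$ (all lying in the Hasse window $\big((\sqrt N-1)^{2},(\sqrt N+1)^{2}\big)$), detect the conditions through the Deuring/Hurwitz class number count of $\F_{p_{i}}$-isomorphism classes as in David--Smith, and evaluate the ensuing average over $\C(A,B)$ by the short-interval Barban--Davenport--Halberstam conjecture, in the style of Parks \cite{21}, Banks--Shparlinski \cite{18} and Balog--Cojocaru--David \cite{1}. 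The hypotheses $\exp\!\big(N^{\epsilon^{2}/20(\ell+\gamma_{1})}\big)\gg A,B>N^{\epsilon}$, $AB>N^{3(\ell+\gamma_{1})/2}(\log N)^{6+2\gamma_{2}}(\log\log N)^{(\ell+\gamma_{1})/2}$ and $\tfrac{\log N}{\log\log N}\ge\tfrac{20(\ell+\gamma_{1})}{\epsilon^{2}}$ are exactly what makes this argument run \emph{uniformly} for $m$ up to $\ell+\gamma_{1}$ (all dependence on $m$ being tracked). Since $m!\,|d_{\ell,r}(m)|\le 2^{m}m^{r}=O_{\ell,\gamma_{1},r}(1)$ on that range, feeding the estimate into the truncated sum produces $\sum_{m=\ell}^{\ell+\gamma_{1}}d_{\ell,r}(m)\lambda^{m}$ together with an error $O\!\big((\log N)^{-\ell-\gamma_{2}}\big)$, governed by the $m=\ell$ term.

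\emph{Step 3 (the tail; assembly).} It remains to bound $\sum_{m>\ell+\gamma_{1}}m!\,d_{\ell,r}(m)\mathcal{B}_{m}$. By \eqref{comb} the expression $M_{E}(N)^{r}\mathbf{1}_{\{M_{E}(N)\ge\ell\}}-\sum_{\ell\le m\le\ell+\gamma_{1}}m!\,d_{\ell,r}(m)\binom{M_{E}(N)}{m}$ vanishes unless $M_{E}(N)\ge\ell+\gamma_{1}+1$, and on that event it is $O_{\ell,\gamma_{1},r}\!\big(M_{E}(N)^{\ell+\gamma_{1}}\big)$ (here the hypothesis $r\le\ell$ is used); expanding $M_{E}(N)^{\ell+\gamma_{1}}\mathbf{1}_{\{M_{E}(N)\ge\ell+\gamma_{1}+1\}}$ through Stirling numbers and Vandermonde in terms of the $\binom{M_{E}(N)}{s}$ with $s\ge\ell+\gamma_{1}+1$, and inserting a Selberg/large-sieve upper bound $\mathcal{B}_{s}\ll_{s}(\log\log N/\log N)^{s}$ over $\C(A,B)$ (available in the present range of $A,B$, and geometrically decaying so that the sum is dominated by $s=\ell+\gamma_{1}+1$), one gets $\sum_{m>\ell+\gamma_{1}}m!\,d_{\ell,r}(m)\mathcal{B}_{m}\ll(\log\log N/\log N)^{1+\ell+\gamma_{1}}\ll\big(N/(\phi(N)\log N)\big)^{1+\ell+\gamma_{1}}$, the last error term of the theorem. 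Finally, since $\lambda\asymp N/(\phi(N)\log N)\ll\log\log N/\log N$, the omitted tail $\sum_{m>\ell+\gamma_{1}}d_{\ell,r}(m)\lambda^{m}$ of the Poisson series is itself $O(\lambda^{1+\ell+\gamma_{1}})$, so $\sum_{m=\ell}^{\ell+\gamma_{1}}d_{\ell,r}(m)\lambda^{m}=\sum_{m\ge\ell}d_{\ell,r}(m)\lambda^{m}+O(\lambda^{1+\ell+\gamma_{1}})$ and either form of the main term is admissible; collecting the pieces yields Theorem \ref{Th_03}. The genuine obstacle is Step 2: securing the sharp asymptotic for $\mathcal{B}_{m}$ \emph{with a clean $(\log N)^{-m-\gamma_{2}}$ error and uniformly in $m\le\ell+\gamma_{1}$}. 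This is strictly stronger than the point-probability statement of \emph{Theorem \ref{Th_01}} — when the latter is converted to factorial moments it carries an extra $\lambda^{m+1}$ term, which would spoil the error at order $\lambda^{\ell+\gamma_{1}}$ — and it is exactly this added precision that pins the main term down to degree $\ell+\gamma_{1}$ in $\lambda$ and forces "$\ell+\gamma_{1}$" into all the hypotheses.
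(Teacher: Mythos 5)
Your combinatorial skeleton is sound and is essentially the paper's: your Step 1 identity is an equivalent repackaging of the relations (\ref{eq_4}) and Lemma \ref{lm_2}, your factorial moments $\mathcal{B}_m$ are (up to the factor $1/m!$) the distinct-prime-tuple counts $\Omega(m,m)/\#\C(A,B)$ of Section 3, and your final substitution of $K(N)N/(\phi(N)\log N)$ for $\sum_{N^-<p<N^+}H(D_N(p))/p$ under Conjecture \ref{conj_1} is exactly the paper's use of (\ref{eq_13}). The genuine gap is the one you yourself flag and then pass over: the asymptotic $\mathcal{B}_m=\lambda^m/m!+O\big((\log N)^{-m-\gamma_2}\big)$, uniformly for $m\le\ell+\gamma_1$ and for $A,B$ as small as $N^\epsilon$, is not available off the shelf from "the same circle of ideas as Theorem \ref{Th_01}" — it is precisely Proposition \ref{lm_0.1}, the paper's main new contribution, and nothing in your proposal proves it. In this range the Fouvry--Murty box-counting used in \cite{6} fails outright (the boxes have side $p_1\cdots p_m\asymp N^{m}\gg A,B$), and the Barban--Davenport--Halberstam hypothesis is not what rescues it: Conjecture \ref{conj_1} enters only through (\ref{eq_13}) for the prime sum, while the average over $\C(A,B)$ is handled unconditionally by expanding the congruence count $Z(P,S,T)$ in Dirichlet characters modulo $p_1\cdots p_m$, splitting into the four character classes of (\ref{thefourcs}), and estimating via the multiplicative large sieve (Theorem \ref{largesieve}), the Friedlander--Iwaniec fourth-moment bound (Theorem \ref{fourthpower}) and H\"older, together with a separate treatment (using Kowalski's bound \cite{8} for CM curves) of the contribution with $s_it_i=0$. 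Without carrying out this estimate, the heart of the theorem is assumed rather than proved.

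A secondary problem lies in your Step 3. Expanding $M_E(N)^{\ell+\gamma_1}\mathbf{1}_{\{M_E(N)\ge\ell+\gamma_1+1\}}$ into the binomials $\binom{M_E(N)}{s}$ brings in every $s\ge\ell+\gamma_1+1$ up to $s\asymp\sqrt{N}/\log N$, with coefficients of size roughly $2^{s}s^{\ell+\gamma_1}$, and the bound $\mathcal{B}_s\ll_s(\log\log N/\log N)^{s}$ you invoke is neither justified for large $s$ when $A,B$ may be as small as $N^\epsilon$ (at level $s$ the method wants roughly $AB\gg N^{3s/2}$, and your implied constants depend on $s$ uncontrollably) nor needed. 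The paper avoids this entirely: for a curve with $M_E(N)=L\ge\ell+\gamma_1+1$ one has $L^{r}\ll e^{\ell+\gamma_1}\,L!/(L-\ell-\gamma_1-1)!$ uniformly in $L$ (Stirling, using $r\le\ell$), so the whole tail is bounded pointwise by a constant times the single factorial moment at level $\ell+\gamma_1+1$, as in (\ref{B_2}), and only one further application of Proposition \ref{lm_0.1} as an upper bound is required; this is what produces the stated error $O\big(N/(\phi(N)\log N)\big)^{1+\ell+\gamma_1}$. Your Step 3 can be repaired by the same pointwise comparison, but the missing proof of the factorial-moment asymptotic remains the essential gap.
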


\textbf{Remark:} Although, in \cite{21}, Theorem \emph{\ref{Par}} is claimed to hold for $\exp(N^\epsilon)\gg A,B>N^\epsilon$, the correct upper bound for $A$ and $B$ should be of the order $\exp(N^{O(\epsilon^2)})$.\par

The crucial difference between proof of Theorem \ref{Th_01} and Theorem \ref{th_1} is Proposition \ref{lm_0.1}, which is stated in Section 3.  In this proposition, we have better estimate of the number of curves of the form $E_{a,b}: y^2=x^3+ax+b$ with $a,b\in \Z$, which simultaneously reduces modulo a given set of distinct primes $(p_1,p_2,\cdots, p_\ell)$ to fixed set of curves of the form $E_{s_1,t_1}/\F_{p_1}$, $E_{s_2,t_2}/\F_{p_2}, \cdots, E_{s_\ell,t_\ell}/\F_{p_\ell}$ for $(s_1,s_2,\cdots, s_\ell), (t_1,t_2,\cdots, t_\ell)\in \F_{p_1}^*\times \cdots \times \F_{p_\ell}^*$\par
In our previous paper with Balasubramanian \cite{6}, we estimated number of curves satisfying above conditions using a technique essentially due to Fouvry and Murty \cite{7}, which involves partitioning a rectangle of size $A\times B$ into boxes of size $p_1p_2\cdots p_\ell\times p_1p_2\cdots p_\ell$ and using Chinese reminder theorem to merge congruence condition over different primes together. While in Proposition \ref{lm_0.1} we use estimates of sums of suitable multiplicative characters.

\textbf{Acknowledgements: }I would like to thank Chantal David, Dimitris Koukoulopoulos and Amir Akbary for their useful advices. I also thank James Parks for spending time in clarifying some of my doubts that were essential for completing the work.

\textbf{Funding: }This work was done while working as a Postdoctoral Fellow at Centre de Recherches Mathematiques, Université de Montreal.
\section{Preliminaries}

Let $D$ be a negative discriminant. Using the class number formula [p. 515, \cite{14}], the \emph{Kronecker class number} for a discriminant $D$ can be written as
\begin{align}
 H(D):=\sum_{\underset{D/f^2\equiv 0,1\, (\text{mod }4)}{f^2\mid D}}\frac{\sqrt{|D|}}{2\pi f}L(1, \chi_{D/f^2})\label{eq_2.1}
\end{align}
where $\chi_d$ is the Kronecker symbol $(\frac{d}{.})$ and $L(s,\chi_d):=\overset{\infty}{\underset{n=1}{\sum}}\frac{\chi_d(n)}{n^s}$.\par

Using Deuring's theorem \cite{16} we get
   \begin{align}
   H(t^2-4p)&=\sum_{\underset{a_p(E)=t}{\tilde{E}/\F_p}}\frac{1}{\#\text{Aut}(\tilde{E})},\label{eq_deu}
   \end{align}

where the sum is over the $F_p$-isomorphism classes of elliptic curves.\par
Define, \begin{align}
                  D_N(p):=(p+1-N)^2&-4p= (N+1-p)^2-4N,\label{eq_2.2}\\
                  N^+:=(\sqrt{N}+1)^2 \,; &\quad N^-:=(\sqrt{N}-1)^2\nonumber\\
                  d_{N,f}(p):=\frac{D_N(p)}{f^2} &\text{ for } f^2\mid D_N(p). \nonumber
                   \end{align}
With these notations defined, we recall the following lemma from [Lemma 2.1, \cite{6}]
\begin{lemma}\label{lm_1.1}
Let $N$ be a positive integers and $N^-$ and $N^+$ are defined as above. Also
 let $H(D_N(p))$ be defined by (\ref{eq_2.1}) and (\ref{eq_2.2}). Then
\begin{enumerate}[(a)]
\item $$\underset{N^-<p<N^+}{\sum}{H(D_N(p))} \ll  \frac{N^2}{\phi(N)\log N}.$$
 \item For $k\ge 2$, $$\underset{N^-<p<N^+}{\sum}{H(D_N(p))^k} \ll  N^{\frac{k+1}{2}}(\log N)^{k-2}(\log \log N)^k.$$
\end{enumerate}
\end{lemma}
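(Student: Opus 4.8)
The plan is to prove part~(a) first; part~(b) will then follow from it in a few lines. For~(a), substitute $t=N+1-p$. As $p$ runs over the primes in $(N^-,N^+)=\big((\sqrt N-1)^2,(\sqrt N+1)^2\big)$, the integer $t$ runs bijectively over those $t$ with $|t|<2\sqrt N$ and $N+1-t$ prime, and by~(\ref{eq_2.2}) one has $D_N(p)=t^2-4N<0$ with $|D_N(p)|=4N-t^2$. Feeding this into the class number formula~(\ref{eq_2.1}) gives
\[
\sum_{N^-<p<N^+}H(D_N(p))=\frac{1}{2\pi}\sum_{\substack{|t|<2\sqrt N\\ N+1-t\ \mathrm{prime}}}\sqrt{4N-t^2}\sum_{\substack{f^2\mid t^2-4N\\ (t^2-4N)/f^2\equiv 0,1\,(4)}}\frac{1}{f}\,L\!\left(1,\chi_{(t^2-4N)/f^2}\right).
\]
Since $f^2\le 4N$ and $\sum_{f^2\mid m}1/f\le m/\phi(m)\ll\log\log m$, and $\sqrt{4N-t^2}\le 2\sqrt N$, the whole quantity is bounded by $\sqrt N\log\log N$ times a weighted sum over the primes $N+1-t$ of values $L(1,\chi_{(t^2-4N)/f^2})$; so the target reduces to showing that this sum over $t$ is $\ll N^{3/2}/(\phi(N)\log N)$.

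To estimate it I would expand $L(1,\chi_d)=\sum_{n\ge1}\chi_d(n)/n$, truncate via $L(1,\chi_d)=\sum_{n\le z}\chi_d(n)/n+O\big(|d|^{1/2}z^{-1}\log|d|\big)$ (Pólya--Vinogradov plus partial summation, legitimate since $\chi_d$ is a real non-principal character of conductor $\le|d|\le 4N$), and split the truncated sum according to whether $n$ is a perfect square. For $n=m^2$ the Kronecker symbol $\chi_{(t^2-4N)/f^2}(n)$ is just the indicator of $\gcd(m,t^2-4N)=1$; these terms leave a sum over the primes $N+1-t$ confined to the residue classes cut out by $f^2\mid t^2-4N$ and $\gcd(m,t^2-4N)=1$, and an application of the Brun--Titchmarsh inequality on the interval $(N^-,N^+)$ (of length $\asymp\sqrt N$) bounds this by $\sqrt N/\log N$ times the resulting local densities. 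Multiplying those densities over primes $q\mid N$ and $q\nmid N$ is precisely what manufactures the factor $N/\phi(N)$ (each local factor being $\ll q/(q-1)$ for $q\mid N$ and $1+O(q^{-2})$ otherwise), and summing the rapidly convergent tails in $m$ and $f$ produces the claimed $N^{3/2}/(\phi(N)\log N)$. For $n$ not a perfect square, $t\mapsto\chi_{(t^2-4N)/f^2}(n)$ is a genuine non-principal character evaluated at the quadratic polynomial $t^2-4N$; replacing the primality of $N+1-t$ by Selberg upper-bound sieve weights reduces the $t$-sum to character sums over arithmetic progressions, which after completion are $\ll\sqrt n\,\log n$ by Weil's bound (Pólya--Vinogradov for polynomial arguments). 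Choosing $z$ so that the moduli involved stay a small power of $N$ below $\sqrt N$, this cancellation together with the truncation error is $o\big(N^{3/2}/(\phi(N)\log N)\big)$, completing~(a).

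Part~(b) then follows by peeling off one factor: for $k\ge2$,
\[
\sum_{N^-<p<N^+}H(D_N(p))^k\le\Big(\max_{N^-<p<N^+}H(D_N(p))\Big)^{k-1}\sum_{N^-<p<N^+}H(D_N(p)).
\]
From~(\ref{eq_2.1}), $H(D)\le\frac{\sqrt{|D|}}{2\pi}\big(\sum_{f^2\mid D}1/f\big)\max_f L(1,\chi_{D/f^2})\ll\sqrt{|D|}\,\log|D|\,\log\log|D|$, so since $|D_N(p)|\le 4N$ the maximum is $\ll\sqrt N\,\log N\,\log\log N$; combining with part~(a) and $N/\phi(N)\ll\log\log N$ gives
\[
\sum_{N^-<p<N^+}H(D_N(p))^k\ll(\sqrt N\log N\log\log N)^{k-1}\cdot\frac{N^2}{\phi(N)\log N}\ll N^{\frac{k+1}{2}}(\log N)^{k-2}(\log\log N)^{k},
\]
which is~(b).

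The main obstacle is the non-square part of the $n$-sum in~(a): a naive bound of the Kronecker symbol by $1$ loses a factor of $\log N$ (the $n$-sum becomes $\sum_{n\le z}1/n$), so genuine cancellation is unavoidable, yet what one needs is cancellation in a character sum over \emph{primes} in a short interval, which is not directly accessible. The resolution is to interpose the Selberg sieve for the condition that $N+1-t$ be prime, converting the problem into character sums over progressions that can be completed and estimated by Weil's bound — but this forces the conductors (governed by the truncation level $z$ and by $f$) to remain below the length $\asymp\sqrt N$ of $(N^-,N^+)$, so $z$ must be chosen delicately against the Pólya--Vinogradov tail of $L(1,\chi_d)$. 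A secondary, more bookkeeping-heavy point is extracting the exact $N/\phi(N)$-dependence uniformly in $N$ from the local densities produced by the square terms.
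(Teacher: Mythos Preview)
The paper does not prove this lemma at all: it is quoted verbatim from \cite{6} (Lemma~2.1 there), so there is no in-paper argument to compare against. I can therefore only assess your proposal on its own merits.

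Your derivation of part~(b) from part~(a) is correct and efficient: pulling off $k-1$ copies of the pointwise bound $H(D_N(p))\ll\sqrt N\log N\log\log N$ and invoking~(a) together with $N/\phi(N)\ll\log\log N$ gives exactly the stated estimate.

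For part~(a), the overall architecture (class-number formula, expand $L(1,\chi_d)$, truncate, isolate a ``main'' piece of the $n$-sum and show the rest cancels) is the standard one and is what underlies \cite{4} and \cite{6}. But your split of the $n$-sum into perfect squares versus non-squares is the wrong dichotomy, and this is exactly where your claimed source of the $N/\phi(N)$ factor goes astray. For a prime $q\mid N$ one has $t^2-4N\equiv t^2\pmod q$, so
\[
\left(\frac{(t^{2}-4N)/f^{2}}{q}\right)\in\{0,1\}\qquad\text{for every }t,
\]
and hence $t\mapsto\chi_{(t^2-4N)/f^2}(n)$ is \emph{not} genuinely oscillating whenever $n$ is supported on the primes dividing $N$ --- even though such $n$ need not be squares. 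These are precisely the terms that cannot be killed by Weil/sieve cancellation, and they are the ones that produce the factor $N/\phi(N)$, via
\[
\sum_{n\mid N^{\infty}}\frac{1}{n}=\prod_{q\mid N}\Bigl(1-\frac{1}{q}\Bigr)^{-1}=\frac{N}{\phi(N)},
\]
not via Brun--Titchmarsh local densities as you suggest (those densities are in fact $O(1)$ after summing over $f$). The correct decomposition is $n=n_1 n_2$ with $n_1\mid N^{\infty}$ and $(n_2,N)=1$, and then square versus non-square on $n_2$; the pair $(n_1,n_2=\square)$ forms the main term carrying $N/\phi(N)$, while the non-square $n_2$ give genuine character oscillation in $t$. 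With this fix your outline becomes the argument of \cite{4,6}; without it, your ``non-square'' error term is actually of the same size as the main term and the proof does not close.
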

We also need the following two theorems:

\begin{theorem} \label{largesieve}
Let $M, N, Q$ be positive integers and let $\left\{a_n\right\}_n$ is a sequence of complex numbers. For a fixed $q \leq Q$, we let
$\chi$ be a Dirichlet character modulo $q$. Then
$$\sum_{q\leq Q}\frac{q}{\phi(q)}\sum_{\substack{\chi \imod{q} \\ \chi\:{\rm primitive}}}\left|\sum_{M< n\leq M+N} a_n\chi(n)\right|^2
 \leq (N+3Q^2)\sum_{M< n\leq M+N}|a_n|^2.$$
\end{theorem}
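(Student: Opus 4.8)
The plan is to derive Theorem~\ref{largesieve} from the classical additive large sieve inequality by the standard Gauss--sum argument that converts additive characters into primitive Dirichlet characters; there is no new idea here, only bookkeeping. Throughout write $e(\alpha)=e^{2\pi i\alpha}$, let $\|\alpha\|$ denote the distance from $\alpha\in\R$ to the nearest integer, and set $S(\alpha)=\sum_{M<n\le M+N}a_ne(n\alpha)$.

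First I would recall (and simply cite, e.g.\ Montgomery--Vaughan, or Davenport's \emph{Multiplicative Number Theory}) the additive large sieve: if $\alpha_1,\dots,\alpha_R\in\R/\Z$ are $\delta$-spaced, meaning $\|\alpha_r-\alpha_s\|\ge\delta$ whenever $r\ne s$, then $\sum_{r\le R}|S(\alpha_r)|^2\le(N+\delta^{-1})\sum_{M<n\le M+N}|a_n|^2$. Next I observe that the Farey fractions $a/q$ with $1\le q\le Q$ and $(a,q)=1$ are $Q^{-2}$-spaced, since for distinct such fractions $\|a/q-a'/q'\|\ge 1/(qq')\ge Q^{-2}$; applying the additive large sieve with $\delta=Q^{-2}$ gives
$$\sum_{q\le Q}\ \sum_{\substack{a=1\\(a,q)=1}}^{q}|S(a/q)|^2\ \le\ (N+Q^2)\sum_{M<n\le M+N}|a_n|^2.$$

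The remaining step is the passage from $a/q$ to primitive characters modulo $q$. For $\chi$ primitive modulo $q$ one has the identity $\sum_{a=1}^{q}\overline\chi(a)e(an/q)=\chi(n)\tau(\overline\chi)$ for \emph{every} $n$, together with $|\tau(\overline\chi)|^2=q$. Multiplying by $a_n$ and summing over $n$ yields $\tau(\overline\chi)\sum_n a_n\chi(n)=\sum_{(a,q)=1}\overline\chi(a)S(a/q)$, so that
$$\sum_{\substack{\chi \imod{q} \\ \chi\:{\rm primitive}}}\frac{q}{\phi(q)}\Big|\sum_n a_n\chi(n)\Big|^2=\frac{1}{\phi(q)}\sum_{\substack{\chi \imod{q} \\ \chi\:{\rm primitive}}}\Big|\sum_{(a,q)=1}\overline\chi(a)S(a/q)\Big|^2;$$
enlarging the $\chi$-sum to all characters modulo $q$ (the extra terms are nonnegative) and expanding the square, orthogonality of the characters collapses the right-hand side to $\sum_{(a,q)=1}|S(a/q)|^2$. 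Summing this over $q\le Q$ and inserting the displayed inequality from the previous paragraph gives the assertion, with the constant $N+Q^2\le N+3Q^2$.

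The only genuinely substantive input is the additive large sieve itself, which I would import rather than reprove; within the argument above, the one point needing care is that the Gauss--sum normalization $|\tau(\chi)|^2=q$ and the clean identity for $\sum_a\overline\chi(a)e(an/q)$ both require $\chi$ to be \emph{primitive}, which is exactly why one must pass to all characters modulo $q$ only \emph{after} invoking that identity.
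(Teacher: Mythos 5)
Your proof is correct and is essentially the argument the paper itself relies on: the paper gives no proof but cites [Chapter 27, \cite{12}], where exactly this Gauss--sum conversion of primitive characters into Farey-point exponential sums, followed by the additive large sieve for $Q^{-2}$-spaced points and orthogonality after enlarging to all characters mod $q$, is the standard proof. Your version even yields the slightly sharper constant $N+Q^2\le N+3Q^2$, so nothing further is needed.
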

For the proof of the above theorem, see [Chapter 27, \cite{12}].\\
The second theorem is due to Friedlander and Iwaniec \cite{JFHI:2}, which bounds the fourth power moment of Dirichlet characters.

\begin{theorem}(\textbf{Friedlander-Iwaniec}) \label{fourthpower}
Let $q$ and $N$ be positive integers. Let $\chi$ denote a Dirichlet character modulo $q$, with $\chi_0$ denoting the principal character.
Then $$ \sum_{\chi\neq \chi_0}\left|\sum_{n\leq N}\chi(n)\right|^4 \ll N^2q\log^6 q.$$
\end{theorem}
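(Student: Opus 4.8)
The plan is to turn the fourth moment into a counting problem and then treat the ranges $N\le\sqrt q$ and $N>\sqrt q$ separately; write $e(t)=e^{2\pi it}$. I would first make the usual normalisations: since $\sum_{n=1}^{q}\chi(n)=0$ for $\chi\neq\chi_0$, one may replace $N$ by $N\bmod q$ and assume $N<q$; and reflecting $n\mapsto q-n$ gives $\sum_{n\le N}\chi(n)=-\chi(-1)\sum_{n\le q-1-N}\chi(n)$, so one may assume $1\le N\le q/2$. In the intended application $q$ is prime, so every $\chi\neq\chi_0$ is primitive; for general $q$ a standard conductor decomposition $\sum_{n\le N}\chi(n)=\sum_{d\mid q}\mu(d)\chi^{*}(d)\sum_{m\le N/d}\chi^{*}(m)$ (with $\chi^{*}\bmod q^{*}$ the primitive character inducing $\chi$) reduces the problem to primitive $\chi\bmod q$, which I assume from now on. In the easy range $N\le\sqrt q$ orthogonality already suffices: summing over all $\chi\bmod q$,
\[
\sum_{\chi\bmod q}\bigl|\sum_{n\le N}\chi(n)\bigr|^{4}=\phi(q)\,\#\{(a,b,c,d)\in[1,N]^{4}:\gcd(abcd,q)=1,\ ab\equiv cd\pmod{q}\},
\]
and since $ab,cd\le N^{2}\le q$ the congruence forces $ab=cd$, so the count equals $\sum_{k\le N^{2}}r(k)^{2}$ with $r(k)\le\tau(k)$, which by $\sum_{k\le x}\tau(k)^{2}\ll x(\log x)^{3}$ is $\ll N^{2}(\log N)^{3}$; discarding $\chi_0$ gives $\ll qN^{2}(\log q)^{3}$, inside the claimed bound.

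For $\sqrt q<N\le q/2$ I would invoke P\'olya's Fourier expansion with cutoff $H=q$: for primitive $\chi\bmod q$,
\[
\sum_{n\le N}\chi(n)=\frac{\tau(\chi)}{2\pi i}\sum_{1\le|h|\le q}\frac{\bar\chi(h)}{h}\bigl(1-e(-hN/q)\bigr)+O(\log q).
\]
Since $|\tau(\chi)|=\sqrt q$, pairing $\pm h$ lets me write the main term as $\tfrac{\sqrt q}{2\pi}\,\Sigma_{\chi}$ with $\Sigma_{\chi}=\sum_{h=1}^{q}C_{h}\bar\chi(h)$, where $C_{h}$ depends on $\chi$ only through $\chi(-1)=\pm1$ and---this is the crucial feature---$|C_{h}|\ll\min(1/h,N/q)=1/\max(h,q/N)$. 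Raising to the fourth power, $\sum^{*}_{\chi\bmod q}\bigl|\sum_{n\le N}\chi(n)\bigr|^{4}\ll q^{2}\sum_{\chi\bmod q}|\Sigma_{\chi}|^{4}+\phi(q)(\log q)^{4}$, and the last term is $\ll q(\log q)^{4}\ll N^{2}q(\log q)^{4}$. Expanding $\Sigma_{\chi}^{2}=\sum_{r\bmod q}\beta_{r}\bar\chi(r)$ with $\beta_{r}=\sum_{h_{1}h_{2}\equiv r\,(q),\ h_{i}\le q}C_{h_{1}}C_{h_{2}}$ and using orthogonality (treating even and odd $\chi$ separately) gives $\sum_{\chi\bmod q}|\Sigma_{\chi}|^{4}\ll\phi(q)\,\mathcal{S}$, where
\[
\mathcal{S}:=\sum_{\substack{h_{1},h_{2},h_{3},h_{4}\le q\\ h_{1}h_{2}\equiv h_{3}h_{4}\,(q)}}\bigl|C_{h_{1}}C_{h_{2}}C_{h_{3}}C_{h_{4}}\bigr|.
\]
So it suffices to show $\mathcal{S}\ll(\log q)^{O(1)}/q+N^{2}(\log q)^{O(1)}/q^{2}$: this yields $\sum^{*}_{\chi\bmod q}|\cdots|^{4}\ll q^{2}\phi(q)\mathcal{S}+q(\log q)^{4}\ll q^{2}(\log q)^{O(1)}+qN^{2}(\log q)^{O(1)}\ll N^{2}q(\log q)^{6}$, the final inequality because $q^{2}\le N^{2}q$ when $N\ge\sqrt q$.

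To bound $\mathcal{S}$ I would separate the diagonal $h_{1}h_{2}=h_{3}h_{4}$. Setting $M=q/N$ and $\gamma(k)=\sum_{h_{1}h_{2}=k,\ h_{i}\le q}|C_{h_{1}}C_{h_{2}}|$, the bound $|C_{h}|\ll1/\max(h,M)$ yields $\gamma(k)\ll\tau(k)/\max(k,M^{2})$, so the diagonal is $\sum_{k}\gamma(k)^{2}\ll M^{-4}\sum_{k\le M^{2}}\tau(k)^{2}+\sum_{k>M^{2}}\tau(k)^{2}/k^{2}\ll(\log q)^{3}/M^{2}=N^{2}(\log q)^{3}/q^{2}$, well within the target. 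For the off-diagonal one has $h_{1}h_{2}-h_{3}h_{4}=\ell q$ with $0<|\ell|\le q$; bounding each factor-pair by $\gamma$ reduces it to
\[
\sum_{0<|\ell|\le q}\ \sum_{k}\ \frac{\tau(k)\,\tau(k+\ell q)}{\max(k,M^{2})\,\max(k+\ell q,M^{2})},
\]
and since $N>\sqrt q$ forces $M^{2}<q$, for each fixed $k$ the sum over $\ell$ is a sum of $\tau(m)/\max(m,M^{2})$ over the progression $m\equiv k\pmod q$, $m\le q^{2}$; using $\sum_{m\le x,\ m\equiv a\,(q)}\tau(m)\ll(x/q)\log x$ together with partial summation, that progression-sum is $\ll(\log q)^{2}/q+\tau(k\bmod q)\,\mathbf{1}_{k\bmod q<M^{2}}/M^{2}$, and summing this back over $k$ (again with $\sum_{h_{1},h_{2}\le q}\prod_{i}1/\max(h_{i},M)\ll(\log q)^{2}$ and $\sum_{k\le x}\tau(k)^{2}\ll x(\log x)^{3}$) produces $\ll(\log q)^{4}/q+N^{2}(\log q)^{3}/q^{2}$. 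Hence $\mathcal{S}\ll(\log q)^{4}/q+N^{2}(\log q)^{3}/q^{2}$, which with the previous paragraph finishes the proof.

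I expect the off-diagonal estimate just sketched to be the genuine obstacle: a crude Cauchy--Schwarz or AM--GM there only yields $q\cdot(\text{diagonal})$, overshooting the target by an entire factor of $q$, so one must really exploit the decay $|C_{h}|\ll1/\max(h,q/N)$ hand in hand with the averaging-down of the truncated divisor function along residue classes modulo $q$ (in particular, the indicator $\mathbf{1}_{k\bmod q<M^{2}}$ above is essential). Pinning the exponent of $\log q$ to exactly $6$---rather than some larger $O(1)$---calls for a little care in those divisor-in-progressions estimates, as does checking that the reduction to primitive characters costs at most logarithmic factors.
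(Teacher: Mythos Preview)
The paper does not prove this theorem; it is quoted from Friedlander--Iwaniec and used as a black box, so there is no in-paper argument to compare against. I will therefore just comment on your outline.

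Your reductions to $N\le q/2$ and to primitive characters, your treatment of the range $N\le\sqrt q$ by orthogonality, and your set-up of the complementary range via P\'olya's expansion are all sound, as is the diagonal bound $\sum_k\gamma(k)^2\ll(\log q)^3/M^2$. The genuine gap is precisely where you flag it, and it is more than bookkeeping. You invoke $\sum_{m\le x,\,m\equiv a\,(q)}\tau(m)\ll(x/q)\log x$, but this is false uniformly in $a$: the hyperbola method only gives $(x/q)\log x+O(\sqrt x)$, and the $\sqrt x$ cannot be removed without Kloosterman-type input. Feeding that honest error through your partial summation contributes a term of order $q^{-1/2}(\log q)^{O(1)}$ to $\mathcal S$, hence of order $q^{5/2}(\log q)^{O(1)}$ to the fourth moment, which exceeds the target $N^2q(\log q)^6$ throughout $\sqrt q\le N\ll q^{3/4}$. (Replacing the average bound by the pointwise $\tau(n)\ll_\varepsilon n^\varepsilon$ rescues the argument but only yields $N^2q^{1+\varepsilon}$, weaker than stated.) Moreover, your ``Part 2'' term $\sum_k\gamma(k)\,\tau(k\bmod q)\mathbf 1_{k\bmod q<M^2}/M^2$ unravels back to essentially $\sum_a\alpha_a^2=\mathcal S$ once you sum over $k\equiv a$, so the decomposition is partly circular.

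What Friedlander and Iwaniec actually do---and this explains the title of their paper---is different: writing $\bigl(\sum_{n\le N}\chi(n)\bigr)^2=\sum_k d_N(k)\chi(k)$ with $d_N(k)=\#\{(m,n)\in[1,N]^2:mn=k\}$, one has $\sum_{\chi\neq\chi_0}\bigl|\sum_{n\le N}\chi(n)\bigr|^4=\phi(q)\sum_{a\bmod q}\bigl(D_N(a)-\overline{D_N}\bigr)^2$, the variance over residue classes of the restricted divisor-count $D_N(a)=\#\{(m,n)\in[1,N]^2:mn\equiv a\ (q)\}$; they bound this variance directly. Once you take absolute values on the P\'olya coefficients $C_h$, your route essentially reduces back to this same variance problem, so it does not furnish an independent shortcut with the off-diagonal handled as sketched.
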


\section{Proof of Theorems}
Let $r\ge 1$ be a positive integer.
We have,
\begin{align}
\frac{1}{\#\C(A,B)}\sum_{\underset{M_E(N)\ge \ell}{E\in \C(A,B)}}M_E(N)^r &= \frac{1}{\#\C(A,B)}\sum_{\underset{M_E(N)\ge \ell}{E\in \C(A,B)}}\left(\sum_{\underset{E_p(\F_p)=N}{N^-<p<N^+}}1 \right)^r\nonumber\\
&= \frac{1}{\#\C(A,B)}\sum_{N^-<p_1,\cdots, p_r<N^+}\sum_{\underset{E_{p_1}(\F_{p_1})=\cdots=E_{p_r}(\F_{p_r})=N}{E\in \C(A,B), \, M_E(N)\ge \ell}}1.\nonumber
\end{align}
For any non-negative integer $\gamma_1$, breaking the sum into two parts, the right hand side can be written as

\begin{align}
\frac{1}{\#\C(A,B)}\sum_{\underset{1\le i\le r}{N^-<p_i<N^+}}&\sum_{j=\ell}^{\ell+\gamma_1}\sum_{\underset{\underset{E\in \C(A,B),\,  1\le i\le r}{E_{p_i}(\F_{p_i})=N}}{ M_E(N)= j}}1+ \frac{1}{\#\C(A,B)}\sum_{\underset{1\le i\le r}{N^-<p_i<N^+}}\sum_{\underset{\underset{E\in \C(A,B), \, 1\le i\le r}{E_{p_i}(\F_{p_i})=N}}{ M_E(N)\ge \ell+\gamma_1+1}}1\label{eq_24}
\end{align}
For $r\le {\ell}$, consider the expression
 \begin{align}\label{B_1}
  \frac{1}{\#\C(A,B)}\sum_{\underset{1\le i\le r}{N^-<p_i<N^+}}\sum_{\underset{\underset{\underset{ 1\le i\le r}{E_{p_i}(\F_{p_i})=N}}{ M_E(N)\ge {\ell}+1}}{E\in \C(A,B)}}1
 \end{align}

Now, for a curve $E$ with $M_E(N)=L\ge \ell+1$, the curve $E$ is counted $L^r$ times in (\ref{B_1}). While, the same $E$ is counted $\frac{L!}{(L-{\ell}-1)!}$ times if we consider the expression
\begin{align}\frac{1}{\#\C(A,B)}\sum_{\underset{\underset{p_m\neq p_n \, \text{for }m\neq n}{1\le i\le {\ell}+1}}{N^-<p_i<N^+}}\sum_{\underset{\underset{1\le i\le {\ell}+1}{E_{p_i}(\F_{p_i})=N}}{E\in \C(A,B)}}1\label{eq_008}\end{align}

Using Stirling's approximation, is easy to see that $\frac{L^r(L-{\ell}-1)!}{L!}\ll e^{{\ell}}$ for $r\le {\ell}$.
Thus \begin{align}\label{B_2}
\frac{1}{\#\C(A,B)}\sum_{\underset{1\le i\le r}{N^-<p_i<N^+}}\sum_{\underset{\underset{\underset{ 1\le i\le r}{E_{p_i}(\F_{p_i})=N}}{ M_E(N)\ge \ell+\gamma_1+1}}{E\in \C(A,B)}}1
 &\ll_{\ell, \gamma_1} \frac{1}{\#\C(A,B)}\sum_{\underset{p_m\neq p_n,\, \forall m\neq n }{\overset{N^-<p_i<N^+}{1\le i\le \ell+\gamma_1+1 }}}\sum_{\underset{\underset{1\le i\le {\ell}+\gamma_1+1}{E_{p_i}(\F_{p_i})=N}}{E\in \C(A,B)}}1
 \end{align}
For $r\le \ell\le j\le \ell+\gamma_1$, using a similar argument, one can also show that
\begin{align}
 \frac{1}{\#\C(A,B)}\sum_{\underset{1\le i\le r}{N^-<p_i<N^+}}\sum_{\underset{\underset{E\in \C(A,B),\,  1\le i\le r}{E_{p_i}(\F_{p_i})=N}}{ M_E(N)= j}}1=\frac{j^r}{j!}\frac{1}{\#\C(A,B)}\sum_{\underset{\underset{p_m\neq p_n \, \text{for }m\neq n}{1\le i\le j}}{N^-<p_i<N^+}}\sum_{\underset{\underset{1\le i\le j, \, M_E(N)=j}{E_{p_i}(\F_{p_i})=N}}{E\in \C(A,B)}}1
\end{align}

Also, for $r\le\ell\le j\le \ell+\gamma_1$,
\begin{align}
\sum_{\underset{\underset{p_m\neq p_n \, \text{for }m\neq n}{1\le i\le r}}{N^-<p_i<N^+}}\sum_{\underset{\underset{1\le i\le r, \, M_E(N)=j}{E_{p_i}(\F_{p_i})=N}}{E\in \C(A,B)}}1&=\frac{1}{(j-r)!}\sum_{\underset{\underset{p_m\neq p_n \, \text{for }m\neq n}{1\le i\le j}}{N^-<p_i<N^+}}\sum_{\underset{\underset{1\le i\le j, \, M_E(N)=j}{E_{p_i}(\F_{p_i})=N}}{E\in \C(A,B)}}1\label{eq_27}
\end{align}
 We now consider the first term of (\ref{eq_24}). Note that, the primes in the range of summations in (\ref{eq_24}) are not distinct.
Recalling the definition of $S(n,m)$, the Stirling number of the second kind, which equals to the number of ways of partitioning a set of $n$ elements into $m$ non empty sets, we get

\begin{align}
\sum_{\underset{1\le i\le r}{N^-<p_i<N^+}}\sum_{\underset{E(\F_{p_1})=\cdots=E(\F_{p_r})=N}{E\in \C, \, M_E(N)= j}}1
 &=\left( \sum_{m=1}^r\frac{S(r,m)}{(j-m)!}\right)\sum_{\underset{1\le i\le r}{N^-<p_i<N^+}}\sum_{\underset{E_{p_1}(\F_{p_1})=\cdots=E_{p_r}(\F_{p_r})=N}{E\in \C(A,B), \, M_E(N)= j}}1.\label{eq_2.7}
\end{align}
 To simplify the first factor on the right hand side, we use the equality  $ \sum_{m=1}^r\frac{S(r,m)j!}{(j-m)!}=j^r$. See [(4.1.3), p. 60 , \cite{13}].\par

 With this,
\begin{align}
&\sum_{N^-<p_1\neq p_2\neq \cdots \neq p_j<N^+}\sum_{\underset{E_{p_1}(\F_{p_1})=\cdots=E_{p_r}(\F_{p_r})=N}{E\in \C(A,B), \, M_E(N)= j}}1 \nonumber\\
&=\sum_{N^-<p_1\neq p_2\neq \cdots \neq p_j<N^+}\sum_{\underset{E(\F_{p_1})=\cdots=E(\F_{p_j})=N}{E\in \C(A,B), \, M_E(N)\ge j}}1
                     -\sum_{N^-<p_1\neq p_2\neq \cdots \neq p_j<N^+}\sum_{\underset{E(\F_{p_1})=\cdots=E(\F_{p_j})=N}{E\in \C(A,B), \, M_E(N)\ge j+1}}1\label{eq_19}
\end{align}
Now we denote the left hand side of (\ref{eq_27}) by $\omega(r,j)$ and the first term of the right hand side of (\ref{eq_19}) by $\Omega(j,j)$. Also we call the left hand side of (\ref{eq_2.7}) by $\Upsilon(r,j)$. Then, in view of (\ref{eq_27}) and (\ref{eq_2.7}), we get the following set of relations
\begin{align}
\left\{ \begin{array}{ll}
\Upsilon(r,j)=\frac{j^r}{j!}\omega(j,j),\\
\Omega(t,s)=\overset{\infty}{\underset{n=s}{\sum}} \omega(t,n) \quad \text{for }t\le s,\\
\omega(t,n)=\frac{1}{(n-t)!}\omega(n,n) \quad \text{for }t\le n.
 \end{array}
\right.\label{eq_4}
\end{align}
Now, we state the following Proposition, whose proof will be completed in Section 4.
\begin{prop}\label{lm_0.1}
Let $\C(A,B)$ be as above. Let $0<\epsilon<1$ be a small positive number. Suppose $N$ be a positive integer such that $\frac{\log N}{\log \log N}\ge \frac{20\ell}{\epsilon^2}$ with $\exp\left(\left(\frac{N}{\log N}\right)^{\frac{\epsilon^2}{20\ell}}\right)\gg A,B>N^\epsilon$ and $AB>N^{\frac{3\ell}{2}}(\log N)^{6+2\gamma_2}(\log \log N)^\frac{\ell}{2}$, then
 \begin{align*}\frac{1}{\#\C(A,B)}\sum_{\underset{\underset{p_m\neq p_n \, \text{for }m\neq n}{1\le i\le \ell}}{N^-<p_i<N^+}}\sum_{\underset{\underset{1\le i\le \ell}{E_{p_i}(\F_{p_i})=N}}{E\in \C(A,B)}}1= &\left( \sum_{N^-<p< N^+}\frac{H(D_N(p))}{p} \right)^\ell+O\left(\frac{1}{(\log N)^{\ell+\gamma_2}}\right).
 \end{align*}
\end{prop}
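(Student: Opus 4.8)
The plan is, for each unordered $\ell$-tuple of distinct primes $p_1,\dots,p_\ell$ in the Hasse interval $(N^-,N^+)$, to count the pairs $(a,b)\in\Z^2$ with $|a|\le A$, $|b|\le B$, $\Delta(E_{a,b})\ne 0$ and $|E_{a,b}(\F_{p_i})|=N$ for all $i$, detecting the $\ell$ reduction conditions by multiplicative characters modulo the $p_i$: the principal characters furnish the main term and the rest are controlled by the large sieve and the Friedlander--Iwaniec bound. For the local input, fix a prime $p$ in the range (large, since $N$ is large) and put $t_p:=p+1-N$, which is odd, hence nonzero. Let $\mathcal S_p=\{(a,b)\bmod p:\Delta(E_{a,b})\not\equiv 0,\ |E_{a,b}(\F_p)|=N\}$. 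The group $\F_p^*$ acts on short Weierstrass models by $(a,b)\mapsto(u^4a,u^6b)$, its orbits on pairs of nonzero coordinates being exactly the $\F_p$-isomorphism classes with $j\notin\{0,1728\}$; combined with Deuring's formula (\ref{eq_deu}) this gives $|\mathcal S_p|=(p-1)H(D_N(p))+O(p)$. Moreover $\mathcal S_p\cap(\F_p^*)^2$ is a union of cosets of $H_p=\{(u^4,u^6):u\in\F_p^*\}$, a subgroup of index $2(p-1)$ in $(\F_p^*)^2$, so for $ab\not\equiv 0\bmod p$,
\[\mathbf{1}_{\mathcal S_p}(a,b)=\frac{1}{2(p-1)}\sum_{\chi_1^4\chi_2^6=\chi_0\bmod p}\widehat{\mathcal S}_p(\chi_1,\chi_2)\,\chi_1(a)\chi_2(b),\]
with $\widehat{\mathcal S}_p(\chi_0,\chi_0)=2H(D_N(p))+O(1)$, $|\widehat{\mathcal S}_p(\chi_1,\chi_2)|\le 2H(D_N(p))+O(1)$, and, by Parseval, $\sum_{\chi_1^4\chi_2^6=\chi_0}|\widehat{\mathcal S}_p(\chi_1,\chi_2)|^2\ll p\bigl(H(D_N(p))+1\bigr)$. (Pairs with $ab\equiv 0$ modulo some $p_i$ contribute, after using the expansion for the remaining primes, $O_\ell\bigl(N^{-1/2}(\log N)^{-\ell}(\log\log N)^{O(\ell)}\bigr)$ to the normalized count, and are discarded.)

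For the main term, multiply the $\ell$ expansions, sum over $(a,b)$ in the box, and use the Chinese Remainder Theorem to assemble $\Psi_j=\prod_i\chi_j^{(i)}$ into characters modulo $P=p_1\cdots p_\ell$. The term in which every $\chi^{(i)}$ is principal equals $\tfrac{\prod_i\widehat{\mathcal S}_{p_i}(\chi_0,\chi_0)}{\prod_i 2(p_i-1)}\,\#\{(a,b):|a|\le A,|b|\le B,\ p_i\nmid ab\ \forall i\}$; inserting $\#\{|a|\le A:p_i\nmid a\ \forall i\}=2A\prod_i(1-p_i^{-1})+O_\ell(1)$ and $\#\C(A,B)=4AB+O(\max(A,B))$, dividing by $\#\C(A,B)$, and summing over all tuples of distinct primes reconstructs $\bigl(\sum_{N^-<p<N^+}H(D_N(p))/p\bigr)^\ell$. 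The tuples with a repeated prime contribute, by Lemma \ref{lm_1.1}, $\ll_\ell\bigl(\sum_p H(D_N(p))^2/p^2\bigr)\bigl(\sum_p H(D_N(p))/p\bigr)^{\ell-2}\ll N^{-1/2}(\log\log N)^{O(1)}$, and the degenerate and boundary corrections above are of the same or smaller order; all are $o\bigl((\log N)^{-\ell-\gamma_2}\bigr)$.

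The heart of the matter is the remaining contribution
\[\mathcal E\ll_\ell\frac1{AB}\sum_{\vec p}\frac1{\prod_i 2(p_i-1)}\sum_{\substack{(\vec\chi_1,\vec\chi_2)\\ \text{not principal}}}\prod_i|\widehat{\mathcal S}_{p_i}(\chi_1^{(i)},\chi_2^{(i)})|\,\Bigl|\sum_{|a|\le A}\Psi_1(a)\Bigr|\Bigl|\sum_{|b|\le B}\Psi_2(b)\Bigr|.\]
I would first peel off the terms with $\Psi_1$ (respectively $\Psi_2$) principal: there the other character is forced non-principal, the constraint pins $\vec\chi$ down to $O_\ell(1)$ choices, and the surviving character sum is bounded by P\'olya--Vinogradov. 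For the remaining terms a Cauchy--Schwarz that splits the $a$- and $b$-sums and attaches $\prod_i|\widehat{\mathcal S}_{p_i}|^{1/2}$ to each side reduces matters to: (a) the arithmetic averages $\sum_{\chi_1^4\chi_2^6=\chi_0}|\widehat{\mathcal S}_{p_i}|^2\ll p_i(H(D_N(p_i))+1)$, together with Cauchy--Schwarz and Lemma \ref{lm_1.1}(a), which yield $\sum_{N^-<p<N^+}(H(D_N(p))+1)^{1/2}\ll N^{3/4}(\log\log N)^{1/2}/\log N$; and (b) the fourth-moment bound of Friedlander--Iwaniec (Theorem \ref{fourthpower}), $\sum_{\Psi\bmod P}\bigl|\sum_{|a|\le A}\Psi(a)\bigr|^4\ll A^4+A^2 P(\log P)^6$ (the map $\vec\chi_1\mapsto\Psi_1$ being $O_\ell(1)$-to-one onto its image), with the large sieve (Theorem \ref{largesieve}) used in the complementary range where $A$ or $B$ exceeds $P$. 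Since $P\le(N^+)^\ell\le(2N)^\ell$, assembling these gives a bound on $\mathcal E$ whose $N$-power is $N^{3\ell/2}/(AB)$ and whose logarithmic factors absorb the $(\log P)^6$ of Theorem \ref{fourthpower} and the Cauchy--Schwarz slack; it is $\ll(\log N)^{-\ell-\gamma_2}$ exactly under $AB>N^{3\ell/2}(\log N)^{6+2\gamma_2}(\log\log N)^{\ell/2}$. The upper bounds $A,B<\exp\bigl((N/\log N)^{\epsilon^2/20\ell}\bigr)$ and the constraint $\tfrac{\log N}{\log\log N}\ge\tfrac{20\ell}{\epsilon^2}$ are exactly what make the secondary terms discarded along the way (and the passage between $\sum_{|a|\le A}$ and $\sum_{n\le A}$) negligible, while $A,B>N^\epsilon$ guarantees $\#\C(A,B)=4AB(1+o(1))$.

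The main obstacle is this last step, where the exponents must be matched with no room to spare: trivial estimates for $\widehat{\mathcal S}_{p_i}$, or second moments/the large sieve in place of the fourth moment, lose powers of $N$ and would force $A,B$ to be genuine powers of $N$ rather than merely $>N^\epsilon$. One must use simultaneously (i) square-root cancellation in $\widehat{\mathcal S}_{p_i}$ averaged over characters, so that $H(D_N(p))$ enters linearly via Lemma \ref{lm_1.1}(a) rather than squared, (ii) the Friedlander--Iwaniec fourth moment to compensate for the complete absence of cancellation in a single character sum of length $\asymp N^\epsilon$ to a modulus of size $\asymp N$, and (iii) a symmetric treatment of $A$ and $B$, so that the hypothesis becomes a condition on the product $AB$ with either factor allowed to be as small as $N^\epsilon$ provided the other compensates.
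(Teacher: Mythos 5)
Your overall architecture is the same as the paper's: detect the $\ell$ reduction conditions by Dirichlet characters modulo the $p_i$, extract the main term from the all-principal tuple, and control the doubly non-principal tuple by Cauchy--Schwarz, Parseval in $(S,T)$, and the Friedlander--Iwaniec fourth moment (which is indeed where the condition $AB>N^{3\ell/2}(\log N)^{6+2\gamma_2}(\log\log N)^{\ell/2}$ comes from). The genuine gap is in your treatment of the mixed terms, where $\prod_i\chi_i$ is principal but $\prod_i\chi_i'$ is not (the paper's $Z_2$, $Z_3$). You observe correctly that these character tuples are $O_\ell(1)$ in number (they are sextic/quartic), but you then propose to bound the single surviving sum $\sum_{|b|\le B}\Psi_2(b)$ by P\'olya--Vinogradov. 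Its conductor is at least $N^-\asymp N$, so P\'olya--Vinogradov gives $\gg\sqrt{N}\log N$, which is worse than the trivial bound whenever $B<N^{1/2}$ --- and $B$ may be as small as $N^\epsilon$ with $\epsilon$ arbitrarily small. The trivial bound is also insufficient: inserting $|\mathcal{B}(\Psi_2)|\ll B$ the mixed terms contribute on the order of $\bigl(\sum_{N^-<p<N^+}H(D_N(p))/p\bigr)^\ell$, i.e.\ the size of the main term, not $O((\log N)^{-\ell-\gamma_2})$. Nor does a fourth moment rescue this case: per prime tuple there are only $O_\ell(1)$ admissible characters, and a large-sieve or fourth-moment bound over moduli up to $N^\ell$ applied to a sum of length $N^\epsilon$ loses a power of $N$. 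This is precisely where the paper (following Parks) uses H\"older with a large exponent $2k$, $k=2\ell/\epsilon$, rewrites $|\mathcal{B}(\chi)|^{2k}$ as the square of a sum of length $B^k$ with divisor weights $\tau_k(b;B)$, and applies the multiplicative large sieve over all primitive characters to moduli $d\le N^\ell$, so that cancellation is obtained on average over the prime tuples; the choice of $k$ makes $B^k$ comparable to $N^{2\ell}$, and it is exactly this device that forces the hypotheses $A,B\le\exp\bigl((N/\log N)^{\epsilon^2/20\ell}\bigr)$ (to control $(\log B)^{(k^2-1)/2k}$) and $\frac{\log N}{\log\log N}\ge\frac{20\ell}{\epsilon^2}$, whose origin your sketch leaves unexplained. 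Without this step (or some substitute that averages over the moduli), the mixed terms are not under control and the proof does not close.

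A secondary omission: among the degenerate pairs you discard, the case $a=0$ or $b=0$ (as integers, not merely $ab\equiv 0$ modulo some $p_i$) consists of CM curves, and the paper needs Kowalski's bound $O_{\epsilon,\ell}(N^{\epsilon/2\ell})$ on the number of primes with $\#E_p(\F_p)=N$ for such curves; this contributes $O\bigl(N^{\epsilon/2}(A^{-1}+B^{-1})\bigr)$ and is one of the reasons the hypothesis $A,B>N^\epsilon$ is needed. Your parenthetical estimate asserts the smallness of all these contributions without this input, whereas the paper's $\Sigma_2$ analysis (which rewrites the surviving congruences with $\hat A=A/p_1\cdots p_e$, $\hat B=B/p_{e+1}\cdots p_{e+f}$ and gains a factor $\sqrt N$) plus the CM case is what actually justifies it.
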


Now, by Proposition \ref{lm_0.1}
\begin{align*}
 \frac{1}{\#\C(A,B)}\Omega(j,j)=\Bigg(\sum_{N^-<p<N^+}\frac{H(D_N(p))}{p}\Bigg)^j+O\Big(\frac{1}{(\log N)^{j+\gamma_2}}\Big),
\end{align*}
whenever $\exp\left(\left(\frac{N}{\log N}\right)^{\frac{\epsilon^2}{20\ell}}\right)\gg A,B>N^\epsilon$ and $AB>N^{\frac{3\ell}{2}}(\log N)^{6+2\gamma_2}$.\par

 Now, we replace $\overset{\ell+\gamma_1}{\underset{j=\ell}{\sum}}\Upsilon(r,j)$ by $\overset{\ell+\gamma_1}{\underset{j=\ell}{\sum}}z_{\ell,r}(j)\Omega(j,j)+O(\Omega(\ell+\gamma_1,\ell+\gamma_1+1))$ where $\{z_{\ell,r}(j)\}$ are some constants to be determined later using (\ref{eq_4}).
Also note that $\Omega(\ell+\gamma_1,\ell+\gamma_1+1)\ll AB\left[\left(\sum_p\frac{H(D_N(p))}{p}\right)^{\ell+\gamma_1}+\frac{1}{(\log N)^{\ell+\gamma_2}}\right]$.\par
Then, in view of (\ref{B_2}) and Proposition \ref{lm_0.1}, the expression in (\ref{eq_24}) equals to
\begin{align}
 \sum_{j=\ell}^{\ell+\gamma_1}z_{\ell,r}(j)\left(\sum_{N^-<p<N^+}\frac{H(D_N(p))}{p}\right)^j+O\left(\sum_{N^-<p<N^+}\frac{H(D_N(p))}{p}\right)^{\ell+\gamma_1+1}+O\left(\frac{1}{(\log N)^{\ell+\gamma_2}}\right)\label{eq_009}
\end{align}
for some real numbers $\{z_{\ell,r}(j)\}_{j=\ell}^{\ell+\gamma_1}$\\
Only thing that remains to be shown is that $\{z_{\ell,r}(j)\}_j$ are equals to $\{d_{\ell,r}(j)\}_j$, as defined in (\ref{eq_10}). For that, we have the following lemma.\par

\begin{lemma}\label{lm_2}
Consider $\omega,\, \Omega$ as variables satisfying the identities in (\ref{eq_4}). Then, the solution of the equation
$$\sum_{j=\ell}^\infty \frac{j^r}{j!}\omega(j,j)=\sum_{j=\ell}^{\infty}z_{\ell,r}(j)\Omega(j,j)$$
in variables $z_{\ell,r}(j)$ are given by $$z_{\ell,r}(j)=\sum_{k=\ell}^j\frac{k^r}{k!}\frac{(-1)^{j-k}}{(j-k)!}=d_{\ell,r}(j).$$
\end{lemma}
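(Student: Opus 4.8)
The plan is to work purely formally with the three relations in (\ref{eq_4}), treating the $\omega(n,n)$ for $n\geq\ell$ as independent indeterminates and expressing everything in terms of them. First I would use the third relation $\omega(t,n)=\frac{1}{(n-t)!}\omega(n,n)$ together with the second relation to expand $\Omega(t,s)=\sum_{n\geq s}\omega(t,n)=\sum_{n\geq s}\frac{1}{(n-t)!}\omega(n,n)$; in particular $\Omega(j,j)=\sum_{n\geq j}\frac{1}{(n-j)!}\omega(n,n)$. The left-hand side of the claimed identity, by the first relation in (\ref{eq_4}), is $\sum_{j\geq\ell}\frac{j^r}{j!}\omega(j,j)$, which is already written in the $\omega(n,n)$ basis. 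So the equation to solve becomes, after substituting the expansion of $\Omega(j,j)$ and swapping the order of summation,
\begin{align*}
\sum_{n\geq\ell}\frac{n^r}{n!}\omega(n,n)=\sum_{n\geq\ell}\left(\sum_{j=\ell}^{n}\frac{z_{\ell,r}(j)}{(n-j)!}\right)\omega(n,n).
\end{align*}
Matching coefficients of $\omega(n,n)$ for each $n\geq\ell$ gives the triangular system $\sum_{j=\ell}^{n}\frac{z_{\ell,r}(j)}{(n-j)!}=\frac{n^r}{n!}$.

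Next I would solve this triangular system by exponential-generating-function / binomial inversion. Setting $a(n)=\frac{n^r}{n!}$ and $b(j)=z_{\ell,r}(j)$, the system reads $a(n)=\sum_{j=\ell}^{n}\frac{b(j)}{(n-j)!}$, i.e. the shifted sequence is the "binomial-type" convolution of $b$ with the sequence $\frac{1}{k!}$. Since the EGF of $\frac{1}{k!}$ is $e^x$, inverting amounts to convolving with the sequence whose EGF is $e^{-x}$, namely $\frac{(-1)^k}{k!}$. Concretely, I claim the inverse relation is $b(n)=\sum_{j=\ell}^{n}\frac{(-1)^{n-j}}{(n-j)!}\,a(j)$, which upon substituting $a(j)=\frac{j^r}{j!}$ gives exactly
\begin{align*}
z_{\ell,r}(n)=\sum_{j=\ell}^{n}\frac{j^r}{j!}\frac{(-1)^{n-j}}{(n-j)!}=d_{\ell,r}(n),
\end{align*}
the formula in (\ref{eq_10}). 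To verify, substitute this candidate back into $\sum_{j=\ell}^{n}\frac{b(j)}{(n-j)!}$ and interchange sums: the coefficient of $a(k)$ (for $\ell\leq k\leq n$) becomes $\sum_{j=k}^{n}\frac{1}{(n-j)!}\frac{(-1)^{j-k}}{(j-k)!}=\frac{1}{(n-k)!}\sum_{i=0}^{n-k}\binom{n-k}{i}(-1)^i$, which is $1$ if $k=n$ and $0$ otherwise by the binomial theorem; hence the sum collapses to $a(n)$, as required. Since the system is lower-triangular with nonzero diagonal entries $\frac{1}{0!}=1$, the solution is unique, so $z_{\ell,r}(j)=d_{\ell,r}(j)$ is the only solution.

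The only mildly delicate point is handling the infinite sums: the identity in the statement is an identity between formal power series in the indeterminates $\omega(n,n)$ (equivalently, it holds coefficient-by-coefficient), so convergence is not an issue and the coefficient-matching step is legitimate. I do not expect a serious obstacle here; the substance is just recognizing that (\ref{eq_4}) lets one pass to the $\omega(n,n)$ basis and that the resulting triangular system is inverted by the $e^{-x}$ coefficients. The bookkeeping with the lower limit $\ell$ in all the sums should be carried carefully but causes no real trouble, since every convolution index stays in the range $[\ell,n]$.
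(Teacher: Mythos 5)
Your argument is correct: expanding $\Omega(j,j)=\sum_{n\ge j}\frac{1}{(n-j)!}\omega(n,n)$ via (\ref{eq_4}), matching coefficients of $\omega(n,n)$, and inverting the resulting lower-triangular system with the alternating binomial identity $\sum_{i=0}^{n-k}\binom{n-k}{i}(-1)^i=\delta_{k,n}$ yields exactly $z_{\ell,r}(j)=d_{\ell,r}(j)$, and the unit diagonal gives uniqueness. Note that the paper itself does not prove Lemma \ref{lm_2} but simply cites [Lemma 3.2, \cite{6}]; your write-up is a self-contained proof of the standard inversion type that the citation refers to, so there is no substantive divergence to report.
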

\begin{proof}
See [Lemma 3.2, \cite{6}] for the proof of the above lemma.
\end{proof}

Finally, combining (\ref{B_1}), (\ref{eq_009}) and Lemma \ref{lm_2}, we have
\begin{align}
 \frac{1}{\#\C(A,B)}\sum_{\underset{M_E(N)\ge \ell}{E\in \C(A,B)}}M_E(N)^r=& \sum_{j=\ell}^{\ell+\gamma_1}d_{\ell,r}(j)\left(\sum_{N^-<p<N^+}\frac{H(D_N(p))}{p}\right)^j\nonumber\\&+O\left(\sum_{N^-<p<N^+}\frac{H(D_N(p))}{p}\right)^{\ell+\gamma_1+1}+O\left(\frac{1}{(\log N)^{\ell+\gamma_2}}\right)\label{eq_11}
\end{align}
for $\exp\left(\left(\frac{N}{\log N}\right)^{\frac{\epsilon^2}{20(\ell+\gamma_1)}}\right)\gg A,B>N^\epsilon$ and $AB>N^{\frac{3(\ell+\gamma_1)}{2}}(\log N)^{6+\gamma_2}$.

Putting $\ell=1$, $r=1$ and $\gamma_1=0$, $\gamma_2=\gamma$, from (\ref{eq_11}) we get,
\begin{align}
\frac{1}{\#\C(A,B)}\sum_{E\in \C(A,B)}M_E(N)=\sum_{N^-<p<N^+}\frac{H(D_N(p))}{p}+O\left(\left(\sum_{N^-<p<N^+}\frac{H(D_N(p))}{p}\right)^2\right)\nonumber\\
+O\Big(\frac{1}{(\log N)^{1+\gamma}}\Big)& \label{eq_7}
\end{align}
for $\exp\left(\left(\frac{N}{\log N}\right)^{\frac{\epsilon^2}{20}}\right)\gg A,B>N^\epsilon$ and $AB>N^{\frac{3}{2}}(\log N)^{6+\gamma}$\\

Also, for $\gamma_1=0$, $\gamma_2=\gamma$, from (\ref{eq_11}) we have
\begin{align}
 \frac{1}{\#\C(A,B)}\sum_{\underset{M_E(N)= \ell}{E\in \C(A,B)}}M_E(N)^r= d_{\ell,r}(\ell)\left(\sum_{N^-<p<N^+}\frac{H(D_N(p))}{p}\right)^\ell&+O\left(\sum_{N^-<p<N^+}\frac{H(D_N(p))}{p}\right)^{\ell+1}\nonumber\\&+O\left(\frac{1}{(\log N)^{\ell+\gamma}}\right)\nonumber
 \intertext{or,}
 \frac{1}{\#\C(A,B)}\sum_{\underset{M_E(N)= \ell}{E\in \C(A,B)}}1= \frac{d_{\ell,r}}{\ell^r}(\ell)\left(\sum_{N^-<p<N^+}\frac{H(D_N(p))}{p}\right)^\ell&+O\left(\sum_{N^-<p<N^+}\frac{H(D_N(p))}{p}\right)^{\ell+1}\nonumber\\&+O\left(\frac{1}{(\log N)^{\ell+\gamma}}\right)
 \label{eq_12}
\end{align}
for $\exp\left(\left(\frac{N}{\log N}\right)^{\frac{\epsilon^2}{20\ell}}\right)\gg A,B>N^\epsilon$ and $AB>N^{\frac{3\ell}{2}}(\log N)^{6+\gamma}$.\\
We use (\ref{eq_7}) and (\ref{eq_12}) to replace $\underset{N^-<p<N^+}{\sum}\frac{H(D_N(p))}{p}$ in the right hand side of (\ref{eq_12}) by $\frac{1}{\#\C(A,B)}\sum_{E\in \C(A,B)}M_E(N)$.
Now, using Lemma \ref{lm_1.1}a,  we get
\begin{align*} \frac{1}{\#\C(A,B)}\sum_{\underset{M_E(N)= \ell}{E\in \C(A,B)}}1= \frac{d_{\ell,r}}{\ell^r}(\ell)\left( \frac{1}{\#\C(A,B)}\sum_{E\in \C(A,B)}M_E(N) \right)^\ell&\left(1+O\left(\frac{N}{\varphi(N)\log N}\right)\right)\nonumber\\&+O\left(\frac{1}{(\log N)^{\ell+\gamma}}\right)
 \end{align*}
for $\exp\left(\left(\frac{N}{\log N}\right)^{\frac{\epsilon^2}{20\ell}}\right)\gg A,B>N^\epsilon$ and $AB>N^{\frac{3\ell}{2}}(\log N)^{6+\gamma}$.
Further, we recall that $d_{\ell,r}(\ell)=\frac{\ell^r}{\ell!}$. This proves Theorem \ref{Th_01}.\par
\medskip Assuming Conjecture \ref{conj_1} and the proof of [Theorem 3, \cite{4}], we have that
\begin{align}
 \sum_{N^-<p<N^+}\frac{H(D_N(p))}{p}=\frac{K(N)N}{\varphi(N)\log N}+O\Big(\frac{1}{(\log N)^{1+\gamma}}\Big)\label{eq_13}
\end{align}
for odd integer $N$, where $K(N)$ is given by (\ref{eq_1.5}). Combining (\ref{eq_13}) with (\ref{eq_11}), we complete the proof of Theorem \ref{Th_03}.

\section{Proof of Proposition \ref{lm_0.1}}
Before proceeding with the proof of the proposition, we define some standard notations. \\
Let $P:=(p_1,\ldots,p_\ell)$ be a vector of $\ell$ distinct primes such that $(\sqrt{N}-1)^2<p_i<(\sqrt{N}+1)^2$ for $1\le i\le \ell$. So, the primes in question are effectively of the order $N$.

Let $S:=(s_1,\ldots,s_\ell)$ and $T:=(t_1,\ldots,t_\ell)$ be elements of $\F_{p_1}^*\times \F_{p_2}^*\times\cdots \times \F_{p_\ell}^*$. For such $S$ and $T$ as above, define the following indicator function
\begin{equation} \label{defnofwpst}
h(P,N,S,T):=\begin{cases} 1 & {\rm if}\: \#E_{p_i,s_i,t_i}(\FF_{p_i})=N\:{\rm for}\:1\leq i\leq \ell, \\ 0 & {\rm otherwise}. \end{cases}
\end{equation}

Also,
\begin{equation*}
\sum_{\FFp}1=\sum_{\substack{1\leq s_1\leq p_1 \\ 1\leq t_1\leq p_1}}\cdots\sum_{\substack{1\leq s_\ell\leq p_\ell \\
1\leq t_\ell\leq p_\ell}}1 \quad{\rm and} \quad \sum_{\FFps}1=\sum_{\substack{1\leq s_1< p_1 \\
1\leq t_1< p_1}}\cdots\sum_{\substack{1\leq s_L< p_\ell \\ 1\leq t_L< p_\ell}}1.
\end{equation*}

Throughout the rest of the proof, $E_{s,t}:\,y^2=x^3+sx+t$ denotes a curve over a finite field $\F_p$. Also $E_{a,b}$ denotes curve over $\mathbb{Q}$ as defined in (\ref{eq_1.6}).\\
Further, we know, two elliptic curves $E_{s,t}$ and $E_{s',t'}$ are isomorphic over $\FF_{p}$ if and only if there exists a $u\in \FF_{p}^*$ such that $s'=su^4$ and $t'=tu^6$.
Hence, the number of elliptic curves over $\FF_{p}$ isomorphic to $E_{s,t}$ is
$$\frac{\#\FF_{p}^*}{\#\aut(E_{s,t})}= \frac{p-1}{\#\aut(E_{s,t})}.$$
Further,
\begin{equation*}
\#\aut(E_{s,t})=\begin{cases} 6 & {\rm if}\: s=0\:{\rm and}\: p\equiv 1 \imod{3}, \\
4 & {\rm if}\: t=0\:{\rm and}\: p\equiv 1 \imod{4}, \\
2 & {\rm otherwise}.
\end{cases}
\end{equation*}

For $1\leq i\leq \ell$,
\begin{align}
\sum_{\substack{1\leq s_i,t_i\leq p_i \\ \#E_{p_i,s_i,t_i}(\FF_{p_i})={N}}}1&=\sum_{\substack{\bar{E}_{p_i,s_i,t_i}/ \FF_{p_i} \\ p_i+1-a_{p_i}(\bar{E}_{p_i,s_i,t_i})=N}}\frac{p_i-1}{\#\aut(\bar{E}_{p_i,s_i,t_i}(\FF_{p_i}))}, \label{alitrivtwo}
\end{align}
where the summation in the right hand side of (\ref{alitrivtwo}) runs over isomorphism classes of elliptic curve $\bar{E}_{p_i,s_i,t_i}(\FF_{p_i})$. Further, using (\ref{eq_deu}), from (\ref{alitrivtwo}) we get

\begin{align}
\sum_{\substack{1\leq s_i,t_i\leq p_i \\ \#E_{p_i,s_i,t_i}(\FF_{p_i})={N}}}1&=(p_i-1)H(D_N(p_i))\label{eq_001}
\end{align}
Now, the left hand side of the proposition \ref{lm_0.1} is equal to
\begin{align}
\frac{1}{\#\C(A,B)}\sum_{\underset{p_m\neq p_n,\, \forall m\neq n }{\underset{1\le i\le \ell}{N^-<p_i<N^+}}}&\sum_{\underset{\underset{1\le i\le \ell}{E_{p_i}(\F_{p_i})=N}}{E\in \C(A,B)}}1\nonumber\\
&=\frac{1}{\#\C(A,B)}\sum_{\underset{p_m\neq p_n\, \forall m\neq n }{\underset{1\le i\le \ell}{N^-<p_i<N^+}}}\sum_{\FFp}h(P,N,S,T)\sum_{\substack{|a|\leq A, |b|\leq B \\ a\equiv s_i \imod{p_i} \\ b\equiv t_i \imod{p_i} \\ 1\leq i \leq \ell}}1.\label{eq_010}
\end{align}

We plan to count the number of curves $E_{a,b}\in \C(A,B)$ whose reductions modulo $p_i$ are $E_{s_i,t_i}$ for all $p_i$. Then, the inner summation on left hand side of (\ref{eq_010}) can be written as
\begin{align}
\frac{1}{\#\C(A,B)}\sum_{\underset{\underset{1\le i\le \ell}{E_{p_i}(\F_{p_i})=N}}{E\in \C(A,B)}}1 &=\frac{1}{\#\C(A,B)} \sum_{\FFp}h(P,N,S,T)\prod_{j=1}^\ell\frac{\#\aut(E_{p_j,s_j,t_j})}{(p_j-1)}
\sum_{\substack{|a|\leq A, |b|\leq B \\ \exists (u_1,\ldots,u_\ell) \in \FF(P)^*\\ a \equiv s_iu_i^4 \imod{p_i},\\
b \equiv t_iu_i^6 \imod{p_i} \\ \forall 1\leq i \leq \ell}}1.\nonumber\\
&=\frac{1}{\#\C(A,B)}\sum_{\underset{p_m\neq p_n,\, \forall m\neq n }{\underset{1\le i\le \ell}{N^-<p_i<N^+}}}\sum_{\FFp}h(P,N,S,T)Z(P,S,T)
\prod_{j=1}^\ell\frac{\#\aut(E_{p_j,s_j,t_j})}{(p_j-1)}, \label{aliquotaut}
\end{align}
where $Z(P,S,T)$ denotes the number of integers $|a|\leq A,|b|\leq B$ such that $\exists$ $(u_1,\ldots,u_\ell) \in \FF(P)^*$ such that
\begin{equation}
 a\equiv s_iu_i^4\imod{p_i}, \quad b\equiv t_iu_i^6\imod{p_i}
\quad{\rm for}\:1\leq i\leq \ell.\nonumber
\end{equation}

Now, $\#\aut(E_{s,t})=2$ most of the times and in particular when $st\neq 0$. So, we write (\ref{aliquotaut}) as
\begin{align}
\frac{2^\ell}{\#\C(A,B)}\sum_{\underset{p_m\neq p_n,\, \forall m\neq n }{\underset{1\le i\le \ell}{N^-<p_i<N^+}}}&\sum_{\FFps}
\frac{h(P,N,S,T)Z(P,S,T)}{(p_1-1)\cdots(p_\ell-1)}
\nonumber \\
+&\frac{1}{\#\C(A,B)}\sum_{\underset{p_m\neq p_n,\, \forall m\neq n }{\underset{1\le i\le \ell}{N^-<p_i<N^+}}}\sum_{\substack{\FFp \\ s_it_i= 0 \\{\rm for\:some}\: 1\leq i \leq \ell}}h(P,N,S,T)Z(P,S,T)
\prod_{j=1}^\ell\frac{\#\aut(E_{p_j,s_j,t_j})}{(p_j-1)}.\label{alione}
\end{align}
Define \begin{align}
\Sigma_1&:=\frac{2^\ell}{\#\C(A,B)}\sum_{\underset{p_m\neq p_n,\, \forall m\neq n }{\underset{1\le i\le \ell}{N^-<p_i<N^+}}}\sum_{\FFps}
\frac{h(P,N,S,T)Z(P,S,T)}{(p_1-1)\cdots(p_\ell-1)}\label{eq_004}\\
\Sigma_2&:=\frac{1}{\#\C(A,B)}\sum_{\underset{p_m\neq p_n,\, \forall m\neq n }{\underset{1\le i\le \ell}{N^-<p_i<N^+}}}\sum_{\substack{\FFp \\ s_it_i= 0 \\{\rm for\:some}\: 1\leq i \leq \ell}}h(P,N,S,T)Z(P,S,T)
\prod_{j=1}^\ell\frac{\#\aut(E_{p_j,s_j,t_j})}{(p_j-1)}.\label{eq_003}
\end{align}
We plan to complete the estimation of $\Sigma_1$ first. Later, we show that the same estimation technique can be modified suitably to give required upper bound to $\Sigma_2$.\\
For this part of the proof related to the estimation of $\Sigma_1$, we are essentially going to follow the approach of Parks \cite{20}, except possibly the different range of summation over primes.

Separating the expected main term from the expected error term in $\Sigma_1$, we write
\begin{align}
\Sigma_1:=&\frac{4AB}{\#\C(A,B)}\sum_{\underset{p_m\neq p_n,\, \forall m\neq n }{\underset{1\le i\le \ell}{N^-<p_i<N^+}}}\prod_{j=1}^\ell\frac{1}{p_j(p_j-1)}\sum_{\FFps}h(P,N,S,T) \nonumber\\
+&\frac{2^\ell}{\#\C(A,B)}\sum_{\underset{p_m\neq p_n,\, \forall m\neq n }{\underset{1\le i\le \ell}{N^-<p_i<N^+}}}\prod_{j=1}^\ell\frac{1}{(p_j-1)}\sum_{\FFps}h(P,N,S,T)\left(Z(P,S,T)-
\frac{4AB}{2^\ell p_1\cdots p_\ell}\right).\label{alitwo}
\end{align}
In order to bound the second summation on the right hand side of (\ref{alitwo}), we use the following lemma

\begin{lemma} \label{errorimprove}
Let $\ell, \, A,\, B, \, h(.),\, Z(.) $ as defined before. Then, as $N\rightarrow \infty$, we have
\begin{align}
&\sum_{\underset{p_m\neq p_n,\, \forall m\neq n }{\underset{1\le i\le \ell}{N^-<p_i<N^+}}}\frac{1}{p_1\cdots p_\ell}\sum_{\FFps}h(P,N,S,T) \left(Z(P,S,T)-\frac{AB}{2^{\ell-2}p_1\cdots p_\ell}\right)\nonumber \\
\ll_{k,\ell} &ABN^{-\frac{\ell}{4k}}(\log N)^\frac{\ell}{2k}(\log\log N)^\ell\left((\log A)^{\frac{k^2-1}{2k}}+(\log B)^{\frac{k^2-1}{2k}}\right)\nonumber\\
+&(A\sqrt{B}+B\sqrt{A})N^{\frac{3\ell}{4k}}(\log N)^{\frac{k^2+\ell-1}{2k}}(\log\log N)^\ell+
\sqrt{AB}N^{\frac{3\ell}{4}}(\log N)^{3-\ell}(\log \log N)^\frac{\ell}{2},\label{lemerrorbound}
\end{align}
for any positive integer $k$.
\end{lemma}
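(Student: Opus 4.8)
The plan is to estimate the character sum $Z(P,S,T)$ by detecting the congruence conditions $a\equiv s_iu_i^4\imod{p_i}$ and $b\equiv t_iu_i^6\imod{p_i}$ with Dirichlet characters and expanding the ranges $|a|\le A$, $|b|\le B$ into character-twisted sums. First I would observe that, since the $u_i$ are allowed to vary over $\FF_{p_i}^*$, the set of admissible residues $a\imod{p_i}$ (resp. $b$) is a coset structure governed by the cubes $s_iu_i^4$ and sextic residues $t_iu_i^6$; after using orthogonality of characters modulo $P=p_1\cdots p_\ell$ to pick out these congruences, the variable $a$ contributes a factor of the form $\sum_{\chi}\overline{\chi}(\text{something})\sum_{|a|\le A}\chi(a)$ and similarly for $b$, with the $\chi$ ranging over characters modulo $P$ whose order divides a fixed small number (related to $4$ and $6$, so effectively $\le 12$). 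The principal character gives the main term $\frac{AB}{2^{\ell-2}p_1\cdots p_\ell}$ that is being subtracted off, and everything else is error.

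Next I would split the non-principal contribution according to the conductor $d\mid P$ of the inducing primitive character. Because $P$ is squarefree (it is a product of distinct primes) and each $p_i\asymp N$, every divisor $d$ of $P$ is again a product of a subset of the $p_i$, so $d$ ranges over roughly $2^\ell$ values of size between $1$ and $N^\ell$. For a fixed such $d$ I would bound $\bigl|\sum_{|a|\le A}\chi^*(a)\bigr|$ either trivially by $\min(A,d)$, or — and this is where the extra savings over the Fouvry–Murty/box-counting argument of \cite{6} come from — by Hölder's inequality combined with the Friedlander–Iwaniec fourth moment bound (Theorem \ref{fourthpower}) and the large sieve (Theorem \ref{largesieve}). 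The parameter $k$ in the statement is precisely the Hölder exponent: writing $|\sum_{a}\chi^*(a)|^{2k}$ and interpolating between the trivial bound and the fourth-power moment summed over $\chi^*\imod d$ yields, after summing over $d\le D$ (for a suitable cutoff $D$ to be optimized), a bound of shape $A\,D^{1/(2k)}(\log D)^{(k^2-1)/(2k)}$ for the $a$-sum and the symmetric expression for the $b$-sum; the cross term $A\sqrt B + B\sqrt A$ arises from the mixed situation where one of the two variable-sums is treated by the fourth moment and the other by the square-root cancellation of a single Gauss-type sum. Finally I would bring back the weight: the outer sum over $P$ carries a factor $\frac{1}{p_1\cdots p_\ell}$ and the indicator $h(P,N,S,T)$, whose $S,T$-sum over $\FF(P)^*$ is $\prod_i (p_i-1)H(D_N(p_i))$ by \eqref{eq_001}; substituting this and using Lemma \ref{lm_1.1}(a),(b) to control $\sum_{N^-<p<N^+}H(D_N(p))^j$ converts the $P$-weighted sums into the powers of $N$, $\log N$ and $\log\log N$ displayed in \eqref{lemerrorbound}, where the three terms correspond respectively to $d$ small (fourth-moment regime for both $a$ and $b$), $d$ medium (mixed regime), and $d$ large (square-root/trivial regime).

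The main obstacle I expect is the bookkeeping in the second step: keeping the dependence on $k$ explicit while simultaneously (i) correctly accounting for the multiplicity coming from the choice of the vector $(u_1,\dots,u_\ell)$ — different $u_i$ can yield the same pair $(a\imod{p_i},b\imod{p_i})$, contributing the automorphism factors $\#\aut$ that are normally $2$ but occasionally $4$ or $6$ — and (ii) choosing the conductor cutoff $D$ so that the three error terms balance. One must be careful that the characters appearing are not arbitrary modulo $d$ but lie in the order-$\le 12$ subgroup, so a naive application of Theorem \ref{fourthpower} (which sums over all $\chi\neq\chi_0$) is wasteful; however, since we only need an upper bound, summing over all non-principal $\chi$ is permissible and simplifies the argument at the cost of nothing in the exponents. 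The condition $p_i\asymp N$ is what lets us replace each $d$ by a power of $N$ cleanly, and the coprimality of the $p_i$ is what makes the Gauss sum / large-sieve factorization over the $d$-part behave multiplicatively, so both hypotheses are used essentially.
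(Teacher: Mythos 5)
Your overall framework (detect the congruences with Dirichlet characters, extract the main term from the principal characters, bound the rest with the large sieve and Friedlander--Iwaniec, then reassemble via $\sum_{S,T}h=\prod_i(p_i-1)H(D_N(p_i))$ and Lemma \ref{lm_1.1}) matches the paper's, but there are two concrete gaps. First, your claim that the relevant characters all have order dividing a bounded number ($\le 12$) is false in the decisive case: orthogonality only imposes the joint condition $\chi_i^4(\chi_i')^6=\chi_0\imod{p_i}$, so when both the $a$-characters and the $b$-characters are allowed to be non-principal (the paper's $Z_4$) the individual characters can have arbitrary order and there are $\asymp 4^\ell\prod_i(p_i-1)$ admissible pairs, not $O_\ell(1)$. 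In exactly this case your plan to ``bring back the weight'' by replacing the $S,T$-sum with $\prod_i(p_i-1)H(D_N(p_i))$ amounts to taking absolute values of $\chi_i(s_i)\chi_i'(t_i)$, which destroys the cancellation the paper needs: the paper keeps the weighted sum $g(P,\chi_i,\chi_i')=\sum_{S,T}h(P,N,S,T)\prod_i\chi_i(s_i)\chi_i'(t_i)$, applies H\"older to separate $|g|^2$, $|\mathcal{A}|^4$, $|\mathcal{B}|^4$, and uses orthogonality to get $\sum_{\chi,\chi'}|g|^2=\prod_i(p_i-1)^2\sum_{S,T}h$, which is what produces the factor $\prod_i\sqrt{H(D_N(p_i))}$ and hence, after Cauchy--Schwarz and Lemma \ref{lm_1.1}, the third term $\sqrt{AB}\,N^{3\ell/4}(\log N)^{3-\ell}(\log\log N)^{\ell/2}$. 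The absolute-value route loses about $N^{\ell/4}$ here, giving roughly $\sqrt{AB}\,N^{\ell}(\log N)^{3-\ell}$, which after dividing by $\#\C(A,B)\asymp AB$ would force $AB\gg N^{2\ell}$ instead of the threshold $AB\gg N^{3\ell/2}(\log N)^{6+2\gamma_2}$ that the lemma is designed to reach; so this case cannot be handled the way you propose.

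Second, the mechanism behind the $k$-dependent terms is not an interpolation between a trivial bound and the fourth moment with an optimized conductor cutoff $D$ (no such cutoff appears; all conductors are simply $\le N^\ell$). In the cases where one family of characters is principal ($Z_2$, $Z_3$), the paper applies H\"older in the prime variables with exponents $\frac{2k}{2k-1}$ and $2k$ against the weights $\prod_j H(D_N(p_j))/p_j$ (this is where $N^{-\ell/(4k)}(\log N)^{\ell/(2k)}$ comes from), and then bounds the $2k$-th moment of $\mathcal{B}$ by writing $|\mathcal{B}(\overline{\chi_1'\cdots\chi_\ell'})|^{2k}$ as the square of a $\tau_k(\cdot\,;B)$-weighted sum of length $B^k$ and applying Theorem \ref{largesieve} to primitive characters of modulus $\le N^\ell$; the bound $\sum_{b\le B^k}\tau_k(b)^2\ll B^k(\log B^k)^{k^2-1}$ is precisely what yields $(\log B)^{(k^2-1)/(2k)}$, and the $N^{2\ell}$ term of the large sieve yields the mixed term $A\sqrt{B}N^{3\ell/(4k)}$ (symmetrically $B\sqrt{A}$). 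Your sketch contains neither the divisor-function device nor the H\"older step in the prime variables, and attributes the three terms to conductor-size regimes rather than to the split according to which of the two character families is principal, so as written it would not reproduce the stated exponents.
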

We give a proof of the above lemma later in this section. \\

Now, using (\ref{eq_001}), we write the inner summation in the first sum in $\eqref{alitwo}$ as
\begin{align}
\sum_{\FFps}h(P,N,S,T)&=\sum_{\substack{1\leq s_1,t_1<p_1 \\ \#E_{p_1,s_1,t_1}(\FF_{p_1})=N}}\cdots\sum_{\substack{1\leq s_\ell,t_\ell<p_\ell \\
\#E_{p_\ell,s_\ell,t_\ell}(\FF_{p_\ell})=N}}1. \nonumber\\
&=\prod_{i=1}^{\ell}\Big(\sum_{\substack{\bar{E}_{p_i,s_i,t_i}/\FF_{p_i} \\ p_i+1-a_{p_i}(\bar{E}_{p_i,s_i,t_i})=N}}\frac{p_i-1}
{\#\aut(\bar{E}_{p_i,s_i,t_i}(\FF_{p_i}))}+O(p_i)\Big) \nonumber \\
&=\prod_{i=1}^{\ell}\Big((p_i-1)H(D_N(p_i))+O(p_i)\Big). \label{alithree}
\end{align}
Using the bound $L(1,\chi_{d_{N,f}(p)})\ll \log N $, one can show that $H(D_N(p_i))\ll \sqrt{N}\log N\log \log N$ for $1\le i\le \ell$. This together with (\ref{alithree}) gives

\begin{align}
\sum_{\FFps}h(P,N,S,T)=&\prod_{i=1}^{\ell}(p_i-1)H(D_N(p_i))+O_\ell\left(N^{\frac{3\ell-1}{2}}(\log N)^{\ell-1}(\log\log N)^{\ell-1}\right).\label{dubsum}
\end{align}
Using $\eqref{dubsum}$, the first term in $\eqref{alitwo}$ can be written as
\begin{align}
&\frac{4AB}{\#\C(A,B)}\sum_{\underset{p_i\neq p_j,\, \forall i\neq j }{\underset{1\le i\le \ell}{N^-<p_i<N^+}}}\prod_{j=1}^\ell\frac{1}{p_j(p_j-1)}\sum_{\FFps}h(P,N,S,T)\nonumber\\
=&\frac{4AB}{\#\C(A,B)}\sum_{\underset{p_i\neq p_j,\, \forall i\neq j }{\underset{1\le i\le \ell}{N^-<p_i<N^+}}}\Bigg(\prod_{j=1}^{\ell}\frac{H(D_N(p_j))}{p_j}+O_\ell\left(\frac{1}{N^{2\ell}}
\cdot N^{\frac{3\ell-1}{2}}(\log N)^{\ell-1}(\log\log N)^{\ell-1}\right)\Bigg) \nonumber\\
=&\Bigg(\sumofallps\prod_{j=1}^\ell\frac{H(D_N(p_j))}{p_j}+O_\ell\left(\frac{(\log\log N)^{\ell-1}}{\sqrt{N}\log N}\right)\Bigg)\left(1+O_\ell\left(\frac{1}{A}+\frac{1}{B}+\frac{1}{AB}\right)\right).\label{alifive}
\end{align}

Combining (\ref{alifive}) and Lemma \ref{lm_1.1}, together with Lemma \ref{errorimprove}, we can write (\ref{alitwo}) as
\begin{align}
\Sigma_1&=\Bigg(\sum_{N^-<p<N^+}\frac{H(D_N(p))}{p}\Bigg)^{\ell}+\mathcal{E}_1(N,A,B,\ell)\nonumber
\intertext{where}
\mathcal{E}_1(N,A,B,\ell)
&\ll_{\ell,k}N^{-\frac{\ell}{4k}}(\log N)^\frac{\ell}{2k}(\log\log N)^\ell\left((\log A)^{\frac{k^2-1}{2k}}+(\log B)^{\frac{k^2-1}{2k}}\right) +\frac{1}{\sqrt{AB}} N^{\frac{3\ell}{4}}(\log N)^{3-\ell}(\log \log N)^{\frac{\ell}{2}}\nonumber \\
&+ \left(\frac{1}{\sqrt{B}}+\frac{1}{\sqrt{A}}\right)N^{+\frac{3\ell}{4k}}(\log N)^{\frac{k^2+\ell-1}{2k}}(\log\log N)^\ell+
\left(\frac{\log \log N}{\log N}\right)^{\ell}\left(\frac{1}{A}+ \frac{1}{B}+\frac{1}{AB}\right)+O_\ell(N^{-\frac{1}{2}}).\label{eq_002}
\end{align}

For the time being, we assume that $\Sigma_2$ is significantly small compared to $\mathcal{E}_1(N,A,B,\ell)$ under the condition that $A,\, B\ge N^\epsilon$.\\
Choose $k=\frac{2\ell}{\epsilon}$. Now, if
\begin{align*}
N^{\epsilon}\le A,\, B&\le \exp{(N^{\frac{\epsilon^2}{20\ell}})}\\
AB&\ge N^{\frac{3\ell}{2}}(\log N)^{6+2\gamma_2}(\log \log N)^{\frac{\ell}{2}}\\
\frac{\log \log N}{\log N}&\ge \frac{20\ell}{\epsilon^2}
\end{align*}
one can check that $$\mathcal{E}_1(N,A,B,\ell)\ll O\Big(\frac{1}{(\log N)^{\ell+\gamma_2}}\Big).$$

Before we proceed with estimating $\Sigma_2$ as defined in (\ref{eq_003}), we give a proof of Lemma \ref{errorimprove}. Later we are going to use the same proof and the discussions above to give a bound on $\Sigma_2$.

\subsection{Proof of Lemma \ref{errorimprove}:}

Let $\chi_i$ and $\chi_i'$ be Dirichlet characters modulo $p_i$ for $1\leq i\leq \ell$ and let $\chi_0$ denote the principal character modulo $n$ for any integer $n$. Let $${\mathcal{A}}(\chi):=\sum_{|a|\leq A}\chi(a) \quad {\rm and}\quad {\mathcal{B}}(\chi):=\sum_{|b|\leq B}\chi(b).$$

For $U:=(u_1,\ldots, u_\ell)\in \F_{p_1}^*\times \cdots \F_{p_\ell}^*=\F(P)^*$,
\begin{align}
Z(P,S,T)&=\sum_{\substack{ |a|\leq A, |b|\leq B \\ \exists\; U\in\FF(P)^*\\ a\equiv s_iu_i^4\imod{p_i},
b\equiv t_iu_i^6\imod{p_i}\\ 1\leq i \leq \ell}} 1 \nonumber\\
=\frac{1}{2^{\ell}}&\sum_{\substack{|a|\leq A \\ |b|\leq B}}\sum_{ U\in\FF(P)^*}\prod_{i=1}^\ell
\Bigg(\frac{1}{\varphi(p_i)^2}\sum_{\chi_i\imod{p_i}}\chi_i(s_iu_i^4)
\overline{\chi_i}(a)\sum_{\chi_i'\imod{p_i}}\chi_i'(t_iu_i^6)\overline{\chi_i'}(b)\Bigg)\nonumber\\
=\frac{1}{2^{\ell}}&\prod_{i=1}^\ell\frac{1}{(p_i-1)^2}\sum_{ U\in\FF(P)^*}\sum_{\substack{\chi_i, \chi_i'\imod{p_i} \\ 1\leq i \leq \ell}} \chi_i(s_i)\chi_i'(t_i)
\chi_i(u_i^4)\chi_i'(u_i^6)
\sum_{\substack{|a|\leq A \\ |b|\leq B}}\overline{\chi_1\cdots\chi_\ell}(a)\overline{\chi_1'\cdots\chi_\ell'}(b).\label{suv}
\end{align}

By the orthogonality relations of Dirichlet characters, we have
\begin{equation}
\prod_{i=1}^\ell\sum_{ U\in\FF_{p_i}^*}\chi_i(u_i^4)\chi_i'(u_i^6)=\begin{cases}\prod_{i=1}^\ell (p_i-1) & \:{\rm if}\: \chi_i^4(\chi_i')^6=\chi_{0}\imod{p_i}\:{\rm for}\:1\leq i\leq \ell, \\ 0 & \:{\rm otherwise}.\end{cases}
\label{suvtwo}
\end{equation}
Then combining $\eqref{suv}$ and $\eqref{suvtwo}$, we get

\begin{align}
Z(P,S,T)=&\frac{1}{2^\ell}\sum_{\substack{\chi_1,\ldots,\chi_\ell \\ \chi_1',\ldots, \chi_\ell' \\ \chi_i^4(\chi_i')^6= \chi_{0} \imod{p_i} \\
{\rm for}\: 1\leq i\leq \ell}}\prod_{i=1}^\ell\left(\frac{\chi_i(s_i)\chi_i'(t_i)}{p_i-1}\right){\mathcal{A}}(\overline{\chi_1\cdots \chi_\ell})
{\mathcal{B}}(\overline{\chi_1'\cdots \chi_\ell'}) \nonumber\\
=&\frac{1}{2^\ell}\Bigg[\sum_{\substack{\chi_i=\chi_i'=\chi_0 \imod{p_i}\\
{\rm for}\: 1\leq i\leq \ell}} + \sum_{\substack{\chi_i=(\chi_i')^6=\chi_0 \imod{p_i}\\
{\rm for}\: 1\leq i\leq \ell\:{\rm and} \\ \exists 1\leq j \leq \ell \:{\rm s.t.}\: \chi_j'\neq \chi_0 \imod{p_j}}}+\sum_{\substack{\chi_i'=\chi_i^4=\chi_0 \imod{p_i}\\
{\rm for}\: 1\leq i\leq \ell\:{\rm and} \\ \exists 1\leq j \leq \ell \:{\rm s.t.}\: \chi_j\neq \chi_0 \imod{p_j}}}\nonumber \\
+&\sum_{\substack{\chi_i^4(\chi_i')^6=\chi_0 \imod{p_i}\\
{\rm for}\: 1\leq i\leq \ell\:{\rm and} \\ \exists 1\leq r,s \leq \ell \:{\rm s.t.}\: \chi_r\neq \chi_0 \imod{p_r},\\\chi_s'\neq \chi_0 \imod{p_s} }}\Bigg]\prod_{i=1}^\ell\left(\frac{\chi_i(s_i)\chi_i'(t_i)}{p_i-1}\right){\mathcal{A}}(\overline{\chi_1\cdots \chi_\ell})
{\mathcal{B}}(\overline{\chi_1'\cdots \chi_\ell'})\nonumber\\
=& Z_1(P,S,T)+Z_2(P,S,T)+Z_3(P,S,T)+Z_4(P,S,T)\label{thefourcs}
\end{align}

Then, the LHS of $\eqref{lemerrorbound}$, can be written as,
\begin{align}
\sumofpfracsing &\sum_{\FFps}h(P,N,S,T) \left(Z(P,S,T)-\frac{AB}{2^{\ell-2}p_1\cdots p_\ell}\right)\nonumber\\
&=\sumofpfracsing\sum_{\FFps}h(P,N,S,T) \left(\sum_{j=1}^4Z_j(P,S,T)-\frac{AB}{2^{\ell-2}p_1\cdots p_\ell}\right).\label{eq_005}
\end{align}

\emph{Case 1:}
 $\chi_i=\chi_i'=\chi_0 \imod{p_i}$ for $1\leq i \leq \ell$.\\
In this case,
\begin{align}
{\mathcal{A}}(\overline{\chi_1\cdots \chi_\ell})&=\sum_{|a|\leq A}\chi_0(a)=\sum_{\substack {|a|\leq A \\ (a,p_1\cdots p_\ell)=1}}1=2A\frac{\varphi(p_1\cdots p_\ell)}{p_1\cdots p_\ell}+O(\tau(p_1\cdots p_\ell))\nonumber \\
&=2A\left(\frac{(p_1-1)\cdots(p_\ell-1)}{p_1\cdots p_\ell}\right)+O_\ell(1)\label{achistuff}
\end{align}
and
$${\mathcal{B}}(\overline{\chi_1'\cdots \chi_\ell'})=2B\left(\frac{(p_1-1)\cdots(p_\ell-1)}{p_1\cdots p_\ell}\right)+O_\ell(1).$$
Consequently,
\begin{align}
Z_1(P,S,T)=&\frac{1}{2^\ell}\prod_{j=1}^\ell \frac{1}{p_j-1}\left(\frac{2A(p_1-1)\cdots(p_\ell-1)}{p_1\cdots p_\ell}+O_\ell(1)\right)
\left(\frac{2B(p_1-1)\cdots(p_\ell-1)}{p_1\cdots p_\ell}+O_\ell(1)\right)\nonumber\\
=&\frac{AB}{2^{\ell-2}p_1\cdots p_\ell}+O_\ell\left(\frac{AB}{N^{\ell+1}}+\frac{A+B+1}{N^\ell}\right). \label{sjequalsone}
\end{align}

Combining (\ref{dubsum}) with (\ref{sjequalsone}) and using Lemma \ref{lm_1.1}, we get

\begin{align}
&\sumofpfracsing\sum_{\FFps}h(P,S,T) \left(Z_1(P,S,T)-\frac{AB}{2^{\ell-2}p_1\cdots p_\ell}\right)\nonumber \\
\ll_\ell&\sumofpfracsing\left(\frac{AB}{N^{\ell+1}}+\frac{A+B+1}{N^\ell}\right)\left(\prod_{j=1}^{\ell}(p_j-1)H(D_N(p_j))+N^{\frac{3\ell-1}{2}}(\log N)^{\ell-1}(\log \log N)^{\ell-1}\right)
\nonumber\\
\ll_\ell & \frac{AB (\log \log N)}{N(\log N)^{\ell}}+\frac{(A+B+1)}{(\log N)^\ell}.\label{jonebound}
\end{align}

Now,
\begin{align*}
Z_2(P,S,T)&=\frac{1}{2^\ell}\sum_{\substack{\chi_i=(\chi_i')^6=\chi_0 \imod{p_i}\\
{\rm for}\: 1\leq i\leq \ell\:{\rm and} \\ \exists 1\leq j \leq \ell \:{\rm s.t.}\: \chi_j'\neq \chi_0 \imod{p_j}}}\prod_{i=1}^\ell\left(\frac{\chi_i(s_i)\chi_i'(t_i)}{p_i-1}\right){\mathcal{A}}(\overline{\chi_1\cdots \chi_\ell})
{\mathcal{B}}(\overline{\chi_1'\cdots \chi_\ell'})\\
&=\frac{1}{2^\ell} \sum_{\substack{(\chi_i')^6=\chi_0 \imod{p_i}\\
{\rm for}\: 1\leq i\leq \ell\:{\rm and} \\ \exists 1\leq j \leq \ell \:{\rm s.t.}\: \chi_j'\neq \chi_0 \imod{p_j}}} \prod_{j=1}^\ell\frac{\chi_j'(t_j)}{(p_j-1)}\left(2A\prod_{i=1}^\ell\frac{(p_i-1)}{p_i}+O_\ell(1)\right)
{\mathcal{B}}(\overline{\chi_1'\cdots \chi_\ell'})\\
&\ll_\ell\frac{A}{p_1\cdots p_\ell}\sum_{\substack{(\chi_i')^6=\chi_0 \imod{p_i}\\
{\rm for}\: 1\leq i\leq \ell\:{\rm and} \\ \exists 1\leq j \leq \ell \:{\rm s.t.}\: \chi_j'\neq \chi_0 \imod{p_j}}}|{\mathcal{B}}(\overline{\chi_1'\cdots \chi_\ell'})|.
\end{align*}

 Using (\ref{alithree}) and H{\"o}lder's inequality, we get
\begin{align}
\sumofpfracsing &\sum_{\FFps}h(P,N,S,T)(Z_2(P,S,T)\nonumber\\
&  \ll_\ell A\sumofallps\prod_{j=1}^\ell\frac{H(D_N(p_j))}{p_j}\sum_{\substack{(\chi_i')^6=\chi_0 \imod{p_i}\\
{\rm for}\: 1\leq i\leq \ell\:{\rm and} \\ \exists 1\leq j \leq \ell \:{\rm s.t.}\: \chi_j'\neq \chi_0 \imod{p_j}}}  |{\mathcal{B}}(\overline{\chi_1'\cdots \chi_\ell'})|\nonumber\\
&\ll_\ell A\Bigg(\sumpchiprim\prod_{j=1}^\ell\left(\frac{H(D_N(p_j))}{p_j}\right)^{\frac{2k}{2k-1}}\Bigg)^{1-\frac{1}{2k}}\nonumber\\
&\quad \quad \quad \times \Bigg(\sumpchiprim|{\mathcal{B}}(\overline{\chi_1'\cdots \chi_\ell'})|^{2k}\Bigg)^\frac{1}{2k}\nonumber\\
&\ll_\ell A\Bigg(\sum_{\substack{N^-<p_i<N^+ \\ p_i\neq p_j ,\, \forall i\neq j \\ 1\leq i \leq \ell}}\left(\frac{(\log p_i)^\ell (\log \log p_i)^\ell}{p_i^\frac{\ell}{2}}\right)^{\frac{2k}{2k-1}}\Bigg)^{1-\frac{1}{2k}}\nonumber\\
&\quad \quad \quad \times \Bigg(\sumpchiprim|{\mathcal{B}}(\overline{\chi_1'\cdots \chi_\ell'})|^{2k}\Bigg)^\frac{1}{2k}\nonumber\\
 & \ll_\ell  N^{-\frac{\ell}{4k}}(\log N)^{\frac{\ell}{2k}}(\log \log N)^\ell \Bigg(\sumpchiprim|{\mathcal{B}}(\overline{\chi_1'\cdots \chi_\ell'})|^{2k}\Bigg)^\frac{1}{2k}\label{holdboundone}
\end{align}

Now, for a fixed prime $\ell-tuple$ $(p_1,p_2,\cdots, p_\ell)$, the second product in (\ref{holdboundone}),
let $J\subseteq\{1,\ldots,\ell\}$ be the set of positive integers such that $\chi_j'\neq \chi_0 \imod {p_j}$ for  $j\in J$. Thus, $$|{\mathcal{B}}(\overline{\chi_1'\cdots \chi_\ell'})|=\Bigg|\sum_{|b|\leq B}\overline{\chi_1'}(b)\cdots\overline{\chi_\ell'}(b) \Bigg|=\Bigg|\sum_{|b|\leq B}\prod_{j\in J}\overline{\chi_j'}(b)\prod_{j\not \in J}\overline{\chi_j'}(b)\Bigg|=\Bigg|\sum_{\substack{|b|\leq B \\ (b,\prod_{j\not \in J} p_j)=1}}\prod_{j\in J}\overline{\chi_j'}(b)\Bigg|.$$ Let $\tau_k(b;B)$ denote the number of representation of $b$ as a product of $k$ positive $B$ smooth integers. Then,
\begin{align*}\Bigg|\sum_{\substack{|b|\leq B \\ (b,\prod_{j\not \in J} p_j)=1}}\prod_{j\in J}\overline{\chi_j'}(b)\Bigg|^{2k}\ll_\ell \Bigg|\sum_{\substack{b\leq B^k \\ (b,\prod_{j\not\in J} p_j)=1}}\tau_k(b;B)\prod_{j\in J}\overline{\chi_j'}(b)\Bigg|^2.\end{align*}
Thus,
\begin{align}
\Bigg(\sumpchiprim|{\mathcal{B}}(\overline{\chi_1'\cdots \chi_\ell'})|^{2k}&\Bigg)^\frac{1}{2k}\nonumber\\ \ll_\ell \Bigg(\sumpchiprim &\Bigg|\sum_{\substack{b\leq B^k \\ (b,\prod_{j\not\in J} p_j)=1}}\tau_k(b;B)\prod_{j\in J}\overline{\chi_j'}(b)\Bigg|^2\Bigg)^\frac{1}{2k}.\label{holdboundtwo}
\end{align}

Now, $\Big(\overline{\prod_{j\in J}{\chi_j'}}\Big)(b)$ is a primitive character modulo $\prod_{j\in J} p_j\le N^\ell$. Now we extend the sum in $\eqref{holdboundtwo}$ to a sum over all primitive characters modulo $d$ for all
modulus $d\leq N^{\ell}$.
Using Theorem \ref{largesieve}, we get
\begin{align}
\Bigg(\sumpchiprim|{\mathcal{B}}(\overline{\chi_1'\cdots \chi_\ell'})|^{2k}\Bigg)^\frac{1}{2k}
&\ll_\ell \Bigg(\sum_{\substack{d\leq N^\ell \\\chi\imod{d}\\ \chi \: {\rm primitive}}}\Bigg|\sum_{b\leq B^k}\tau_k(b;B)\chi(b)\Bigg|^2\Bigg)^\frac{1}{2k}\nonumber\\
&\ll_\ell \Bigg(\sum_{\substack{d\leq N^\ell \\\chi\imod{d}\\ \chi \: {\rm primitive}}}\Bigg|\sum_{b\leq B^k}\tau_k(b)\chi(b)\Bigg|^2\Bigg)^\frac{1}{2k}\nonumber\\
&\ll_\ell \Bigg((B^k+N^{2\ell})\sum_{b\leq B^k}\left|\tau_k(b)\right|^2\Bigg)^{\frac{1}{2k}}\nonumber\\
& \ll_\ell \left((B^k+N^{2\ell})B^k\log^{k^2-1}(B^k)\right)^{\frac{1}{2k}}\label{lalsieveboundprim}
\end{align}

Combining $ \eqref{holdboundone}$ and $\eqref{lalsieveboundprim}$, we get
\begin{align}
&\sumofpfracsing\sum_{\FFps}h(P,N,S,T)Z_2(P,S,T)\nonumber\\
&\quad \quad \ll_\ell A\sumofpfracsing\prod_{j=1}^{\ell}H(D_N(p_j))\sum_{\substack{(\chi_i')^6=\chi_0 \imod{p_i}\\
{\rm for}\: 1\leq i\leq \ell\:{\rm and} \\ \exists 1\leq j \leq \ell \:{\rm s.t.}\: \chi_j'\neq \chi_0 \imod{p_j}}} |{\mathcal{B}}(\overline{\chi_1'\cdots \chi_\ell'})|\nonumber\\
&\quad \quad \ll_\ell A\left((B^k+N^{2\ell})B^k\log^{k^2-1}(B^k)\right)^{\frac{1}{2k}}N^{-\frac{\ell}{4k}}(\log N)^{\frac{\ell}{2k}}(\log \log N)^\ell
\nonumber\\
&\quad \quad \ll_{\ell,k}ABN^{-\frac{\ell}{4k}}(\log N)^{\frac{\ell}{2k}}(\log\log N)^\ell\log^{\frac{k^2-1}{2k}}B+A\sqrt{B}N^{\frac{3\ell}{4k}}(\log N)^{\frac{k^2+\ell-1}{2k}}(\log\log N)^\ell \label{secondtermbound}.
\end{align}

Following almost similar arguments,

$$Z_3(P,S,T)\ll_\ell\frac{B}{p_1\cdots p_\ell}\sum_{\substack{\chi_i^4=\chi_0 \imod{p_i}\\
{\rm for}\: 1\leq i\leq \ell\:{\rm and} \\ \exists 1\leq j \leq \ell \:{\rm s.t.}\: \chi_j\neq \chi_0 \imod{p_j}}}|{\mathcal{A}}(\overline{\chi_1\cdots \chi_\ell})|.$$
and
\begin{align}
\sumofpfracsing &\sum_{\FFps}h(P,N,S,T)Z_3(P,S,T)\nonumber\\
& \ll B\sumofpfracsing\prod_{j=1}^{\ell}H(D_N(p_j))\sum_{\substack{\chi_i^4=\chi_0 \imod{p_i}\\
{\rm for}\: 1\leq i\leq \ell\:{\rm and} \\ \exists 1\leq j \leq \ell \:{\rm s.t.}\: \chi_j\neq \chi_0 \imod{p_j}}} |{\mathcal{A}}(\overline{\chi_1\cdots \chi_\ell})|\nonumber \\
& \ll_{\ell,k} ABN^{-\frac{\ell}{4k}}(\log N)^{\frac{\ell}{2k}}(\log\log N)^\ell\log^{\frac{k^2-1}{2k}}A+B\sqrt{A}N^{\frac{3\ell}{4k}}(\log N)^{\frac{k^2+\ell-1}{2k}}(\log\log N)^\ell \label{sjthreebound}
\end{align}
for a positive real number $k>\frac{1}{2}$.
Hence,
\begin{align}
&\sumofpfracsing\sum_{\FFps}h(P,N,S,T)(Z_2(P,S,T)+Z_3(P,S,T))\nonumber \\
&\ll_{k,\ell} ABN^{-\frac{\ell}{4k}}(\log N)^{\frac{\ell}{2k}}(\log\log N)^\ell(\log^{\frac{k^2-1}{2k}}A+\log^{\frac{k^2-1}{2k}}B)
+(A\sqrt{B}+B\sqrt{A})N^{\frac{3\ell}{4k}}(\log N)^{\frac{k^2+\ell}{2k}}(\log\log N)^\ell \label{jtwosevenbound}
\end{align}

Finally, for  $Z_4(P,S,T)$, define $$g(P,\chi_i,\chi_i'):= \sum_{\substack{1\leq s_i,t_i<p_i \\ 1\leq i\leq \ell}}h(P,N,S,T)\chi_i(s_i)\chi_i'(t_i).$$ Then,
\begin{align}
&\sumofpfracsing\sum_{\FFps}h(P,N,S,T)Z_4(P,S,T)\nonumber\\
=&\frac{1}{2^\ell}\sumofallps \prod_{j=1}^\ell \frac{1}{p_j(p_j-1)}\sum_{\substack{\chi_i^4(\chi_i')^6=\chi_0 \imod{p_i}\\
{\rm for}\: 1\leq i\leq \ell\:{\rm and} \\ \exists 1\leq r,s \leq \ell \:{\rm s.t.}\: \chi_r\neq \chi_0 \imod{p_r},\\\chi_s'\neq \chi_0 \imod{p_s} }}g(P,\chi_i,\chi_i'){\mathcal{A}}(\overline{\chi_1\cdots \chi_\ell}){\mathcal{B}}(\overline{\chi_1'\cdots \chi_\ell'}).\label{sfoursumblah}
\end{align}
Applying H{\"o}lder's inequality again, we have
\begin{align}
&\Bigg|\sum_{\substack{\chi_i^4(\chi_i')^6=\chi_0 \imod{p_i}\\
{\rm for}\: 1\leq i\leq \ell\:{\rm and} \\ \exists 1\leq r,s \leq \ell \:{\rm s.t.}\: \chi_r\neq \chi_0 \imod{p_r},\\\chi_s'\neq \chi_0 \imod{p_s} }}g(P,\chi_i,\chi_i'){\mathcal{A}}(\overline{\chi_1\cdots \chi_\ell}){\mathcal{B}}(\overline{\chi_1'\cdots \chi_\ell'})\Bigg| \nonumber\\
\leq&\Bigg|\sum_{\substack{\chi_i^4(\chi_i')^6=\chi_0 \imod{p_i}\\
{\rm for}\: 1\leq i\leq \ell \:{\rm and} \\ \exists 1\leq r,s \leq \ell \:{\rm s.t.}\: \chi_r\neq \chi_0 \imod{p_r},\\\chi_s'\neq \chi_0 \imod{p_s} }}\left|g(P,\chi_i,\chi_i')\right|^2\Bigg|^\frac{1}{2}\Bigg(\sum_{\substack{\chi_i^4(\chi_i')^6=\chi_0 \imod{p_i}\\
{\rm for}\: 1\leq i\leq \ell\:{\rm and} \\ \exists 1\leq r,s \leq \ell \:{\rm s.t.}\: \chi_r\neq \chi_0 \imod{p_r},\\\chi_s'\neq \chi_0 \imod{p_s} }}\left|{\mathcal{A}}(\overline{\chi_1\cdots \chi_\ell})\right|^4\Bigg)^{\frac{1}{4}} \nonumber\\
\times&\Bigg(\sum_{\substack{\chi_i^4(\chi_i')^6=\chi_0 \imod{p_i}\\
{\rm for}\: 1\leq i\leq \ell\:{\rm and} \\ \exists 1\leq r,s \leq \ell \:{\rm s.t.}\: \chi_r\neq \chi_0 \imod{p_r},\\\chi_s'\neq \chi_0 \imod{p_s} }}\left|{\mathcal{B}}(\overline{\chi_1'\cdots \chi_\ell'})\right|^4\Bigg)^\frac{1}{4}.\label{jfourcauchy}
\end{align}

Now, extending the sum over all non-principal characters modulo $N^\ell$, from Theorem \ref{fourthpower} we have
\begin{align}
\sum_{\substack{\chi_i^4(\chi_i')^6=\chi_0 \imod{p_i}\\
{\rm for}\: 1\leq i\leq \ell\:{\rm and} \\ \exists 1\leq r,s \leq \ell \:{\rm s.t.}\: \chi_r\neq \chi_0 \imod{p_r},\\\chi_s'\neq \chi_0 \imod{p_s} }}\left|{\mathcal{A}}(\overline{\chi_1\cdots \chi_\ell})\right|^4
\ll_\ell& \sum_{\chi\neq \chi_0 \imod{N^\ell}}\Bigg|\sum_{|a|\leq A}\overline{\chi}(a)\Bigg|^4 \nonumber \\
\ll_\ell &{A^2 N^\ell}(\log (N^\ell))^6 \ll_\ell A^2N^\ell(\log N)^6 \label{jfourone}
\end{align}

Similarly,

\begin{align}
\sum_{\substack{\chi_i^4(\chi_i')^6=\chi_0 \imod{p_i}\\
{\rm for}\: 1\leq i\leq \ell\:{\rm and} \\ \exists 1\leq r,s \leq \ell \:{\rm s.t.}\: \chi_r\neq \chi_0 \imod{p_r},\\\chi_s'\neq \chi_0 \imod{p_s} }}\left|{\mathcal{B}}(\overline{\chi_1'\cdots \chi_\ell'})\right|^4
\ll_\ell &\Bigg(\sum_{\chi'\neq \chi_0 \imod{N^\ell}}\Bigg|\sum_{|b|\leq B}\overline{\chi'}(b)\Bigg|^4\nonumber\\
\ll_\ell &{B^2 N^\ell}(\log (N^\ell))^6 \ll_\ell B^2N^\ell(\log N)^6 \label{eq_011}
\end{align}

 Further, from (\ref{jfourcauchy}), we have
\begin{align}
&\sum_{\substack{\chi_i^4(\chi_i')^6=\chi_0 \imod{p_i}\\
{\rm for}\: 1\leq i\leq \ell\:{\rm and} \\ \exists 1\leq r,s \leq \ell \:{\rm s.t.}\: \chi_r\neq \chi_0 \imod{p_r},\\\chi_s'\neq \chi_0 \imod{p_s} }}\left|g(P,\chi_i,\chi_i')\right|^2\leq \sum_{\substack {\chi_i,\chi_i'\imod{p_i}\\ 1\leq i \leq \ell}}\left|g(P,\chi_i,\chi_i')\right|^2\nonumber \\
&\leq \sum_{\FFps}\sum_{S',T'\in\FF(P)^*}
h(P,N,S,T)\overline{h(P,N,S',T')}\sum_{\chi_i\imod{p_i}}\chi_i(s_i)\overline{\chi_i}(s_i')\sum_{\chi_i'\imod{p_i}}\chi_i'(t_i)\overline{\chi_i'}(t_i')\nonumber\\
&=\prod_{i=1}^\ell (p_i-1)^2\sum_{\FFps}\left|h(P,N,S,T)\right|\nonumber\\
&=\prod_{i=1}^\ell (p_i-1)^2\sum_{\FFps}\left|h(P,N,S,T)\right|^2\nonumber\\
&=N^{3\ell}\prod_{i=1}^{\ell}H(D_N(p_i))+O_\ell\left(N^{\frac{7\ell-1}{2}}(\log N)^\ell(\log \log N)^{\ell}\right),\label{jfourtwo}
\end{align}

Combining (\ref{jfourcauchy}), (\ref{jfourone}), (\ref{eq_011}) and (\ref{jfourtwo}), we have
 \begin{align}
&\Bigg|\sum_{\substack{\chi_i^4(\chi_i')^6=\chi_0 \imod{p_i}\\
{\rm for}\: 1\leq i\leq \ell\:{\rm and} \\ \exists 1\leq r,s \leq \ell \:{\rm s.t.}\: \chi_r\neq \chi_0 \imod{p_r},\\\chi_s'\neq \chi_0 \imod{p_s} }}g(P,\chi_i,\chi_i'){\mathcal{A}}(\overline{\chi_1\cdots \chi_\ell}){\mathcal{B}}(\overline{\chi_1'\cdots \chi_\ell'})\Bigg| \nonumber\\
 \ll_\ell &\sqrt{AB}N^{2\ell}(\log N)^3\prod_{i=1}^{\ell}(H(D_N(p_i)))^2\label{wpfourstuff}
 \end{align}

 Thus $\eqref{wpfourstuff}$ and $\eqref{sfoursumblah}$ gives
\begin{align}
&\sumofpfracsing\sum_{\FFps}h(P,N,S,T)Z_4(P,S,T)\nonumber\\
\ll_\ell&\sqrt{AB}(\log N)^3 \sumofallps \prod_{i=1}^{\ell}\sqrt{H(D_N(p_j))} \label{jfourprecauchy}
\end{align}

Using Lemma \ref{lm_1.1} and Cauchy-Schwarz inequality, we get
\begin{align}
\sumofallps \prod_{i=1}^{\ell}\sqrt{H(D_N(p_j))}
\ll_\ell& \Bigg(\sumofallps \prod_{i=1}^{\ell}H(D_N(p_i))\Bigg)^{\frac{1}{2}}\Bigg(\sumofallps 1\Bigg)^{\frac{1}{2}} \nonumber \\
\ll_\ell & \frac{N^{\frac{3\ell}{4}}(\log \log N)^{\frac{\ell}{2}}}{(\log N)^\ell} \label{cauchyhbound}
\end{align}

Thus,
\begin{align}
&\sumofpfracsing\sum_{\FFps}h(P,N,S,T)Z_4(P,S,T)
 \ll_\ell\sqrt{AB}N^{\frac{3\ell}{4}}(\log N)^{3-\ell}(\log \log N)^{\frac{\ell}{2}}
 \label{jeightbound}
  \end{align}
Finally, combining (\ref{jonebound}), (\ref{jtwosevenbound}) and (\ref{jeightbound}), we complete the proof of Lemma \ref{errorimprove}.

\subsection{Bound on $\Sigma_2$:}

Next, we plan to modify the previous proof of Lemma \ref{errorimprove} to give a upper bound on $\Sigma_2$.\\
Recall,
\begin{align}
\Sigma_2&=\frac{1}{\#\C(A,B)} \sumofallps \sum_{\underset{\underset{1\le i\le \ell}{s_it_i=0 \text{ for some }i}}{S,T\in \F(P)}}h(P,N,S,T)Z(P,S,T)\prod_{j=1}^{\ell}\frac{\#Aut(E_{p_j,s_j,t_j})}{p_j-1}\label{e1}
\end{align}
\emph{Case 1: $s_it_i=0$ for all $i$.}\\
Then the corresponding rational curves look like $E_{a,b}$ where $p_1p_2\cdots p_\ell\mid a$ or $p_1p_2\cdots p_\ell\mid b$. \\
In that case, the contribution corresponding to $ab\neq 0$ is bounded by
\begin{align}
\frac{1}{4AB}\sumofallps \frac{AB}{p_1p_2\cdots p_\ell}\ll_\ell  N^{-\frac{\ell}{2}} \label{e2}
\end{align}

If, either $a=0$ or $b=0$, then the curve has complex multiplication. Hence, by Kowalski \cite{8}, there are only $O_{\epsilon, \ell}(N^{\frac{\epsilon}{2\ell}})$ many primes such that $\#E_p(\F_p)=N$. So, the contribution corresponding to $ab=0$ is bounded by
\begin{align}
 O_{\epsilon, \ell}\Big(\frac{N^{\frac{\epsilon}{2}}(A+B)}{AB}\Big)= O_{\epsilon, \ell}\Big(N^{\frac{\epsilon}{2}}\left(\frac{1}{A}+\frac{1}{B}\right)\Big)\label{eq_017}
\end{align}
\emph{Case 2: $s_{j_1}s_{j_1}\neq 0$ for some $j_1$ and $s_{j_2}s_{j_2}= 0$ for some $j_2$.}\\
The number of possible subsets $I$ of $\{1,2,\cdots \ell \}$ such that $s_it_i=0$ for all $i\in I$ is bounded by $O_\ell(1)$. Take one such subset $I$ and without loss of generality, assume that $\#I=e+f$ with $$s_1=s_2\cdots=s_e=0, \quad t_{e+1}=t_{e+2}=\cdots =t_{e+f}=0, \, \text{ and } s_it_i\neq 0  \text{ for }e+f+1\le i\le \ell.$$
In that case, the contribution corresponding to the set $I$ in (\ref{e1}) is bounded by
\begin{align}
 \frac{1}{4AB}\Bigg(\sumofallpsx \prod_{i=1}^{e+f}\frac{1}{(p_i-1)}\Bigg) \sumofallpsy \prod_{i=e+f+1}^\ell \frac{1}{p_i-1}\sum_{\hat{S},\hat{T}\in \F(\hat{P})^*}\hat{h}(\hat{P},N,\hat{S},\hat{T})\hat{Z}(\hat{P},\hat{S},\hat{T})\label{e3}
\end{align}
where \begin{align*}
       \hat{P}:&=(p_{e+f+1},p_{e+f+2},\cdots, p_\ell)\\
\hat{S}:&=(s_{e+f+1},s_{e+f+2},\cdots, s_\ell)\\
\hat{T}:&=(t_{e+f+1},t_{e+f+2}, \cdots, t_\ell)\\
\hat{h}(\hat{P},N,\hat{S},\hat{T}):&=\begin{cases} 1 & {\rm if}\: \#E_{p_i,s_i,t_i}(\FF_{p_i})=N\:{\rm for}\: e+f+1\leq i\leq \ell, \\ 0 & {\rm otherwise}. \end{cases}\\
\end{align*}

\begin{align*}
\hat{Z}(\hat{P},\hat{S},\hat{T}):&=\sum_{\underset{\underset{\text{for some }(u_{e+f+1},\cdots u_\ell) \in \F(\hat{P})^*}{\underset{p_{e+1}\cdots p_{e+f}b\equiv t_iu_i^6\, (\text{mod }p_i)}{p_1\cdots p_ea\equiv s_iu_i^4\, (\text{mod }p_i)}}}{\underset{ |p_{e+1}\cdots p_{e+f}b|\le B}{|p_1\cdots p_ea|\le A,}}} 1\\
&=\frac{1}{2^{\ell}}\sum_{\substack{|a|\leq A/p_1\cdots p_e \\ |b|\leq B/p_{e+1}\cdots p_{e+f}}}\sum_{ \hat{U}\in\FF(\hat{P})^*}\prod_{i=e+f+1}^\ell\\
& \Bigg(\frac{1}{\varphi(p_i)^2} \sum_{\chi_i\imod{p_i}}\chi_i(s_iu_i^4)
\overline{\chi_i}(p_1\cdots p_ea) \sum_{\chi_i'\imod{p_i}}\chi_i'(t_iu_i^6)\overline{\chi_i'}(p_{e+1}\cdots p_{e+f}b|)\Bigg)
      \end{align*}

Then, (\ref{e3}) is bounded by
\begin{align}
 \frac{1}{4AB}\sumofallpsx (\prod_{i=1}^{e+f}\frac{1}{p_i-1})\hat{\mathcal{E}_1}(N,A,B,e+f+1, \ell)=O\Big(\frac{\hat{\mathcal{E}_1}(N,A,B,e+f+1, \ell)}{ABN^{\frac{e+f}{2}}}\Big)\label{e5}
\end{align}
where 
$$\hat{\mathcal{E}_1}(N,A,B,e+f+1, \ell)=\sumofallpsy \prod_{i=e+f+1}^\ell \frac{1}{p_i-1}\sum_{\hat{S},\hat{T}\in \F(\hat{P})^*}\hat{h}(\hat{P},N,\hat{S},\hat{T})\hat{Z}(\hat{P},\hat{S},\hat{T})$$
We proceed with a argument almost similar to the proof of Lemma \ref{errorimprove} to estimate
\begin{align}
\hat{\mathcal{E}_1}(N,A,&B,e+f+1,\ell)=\sumofallpsy (\prod_{i=e+f+1}^\ell\frac{1}{p_i-1})\sum_{\hat{S},\hat{T}\in \F(\hat{P})^*}\hat{h}(\hat{P},N,\hat{S},\hat{T})\hat{Z}(\hat{P},\hat{S},\hat{T})\nonumber\\
&=\sumofallpsy (\prod_{i=e+f+1}^\ell\frac{1}{p_i-1})\sum_{\hat{S},\hat{T}\in \F(\hat{P})^*}\hat{h}(\hat{P},N,\hat{S},\hat{T})\frac{AB/p_1p_2\cdots p_{e+f}}{2^{\ell-e-f-2}p_{e+f+1}\cdots p_\ell}\nonumber\\
&+\sumofallpsy (\prod_{i=e+f+1}^\ell\frac{1}{p_i-1})\sum_{\hat{S},\hat{T}\in \F(\hat{P})^*}\hat{h}(\hat{P},N,\hat{S},\hat{T})\Bigg(\hat{Z}(\hat{P},\hat{S},\hat{T})-\frac{AB/p_1p_2\cdots p_{e+f}}{2^{\ell-e-f-2}p_{e+f+1}\cdots p_{\ell}}\Bigg)\label{e4}
\end{align}
  Define  $\hat{\mathcal{A}}(\overline{\chi_{e+f+1}\cdots \chi_{\ell}})$ and  $\hat{\mathcal{B}}(\overline{\chi'_{e+f+1}\cdots \chi'_\ell})$, by

\begin{align*}
 \hat{\mathcal{A}}(\overline{\chi_{e+f+1}\cdots \chi_\ell})&=\sum_{|a|\le {A}/{p_1\cdots p_e}}\overline{\chi_{e+f+1}\cdots \chi_\ell}(p_1,p_2\cdots p_e a)\\
&=\overline{\chi_{e+f+1}\cdots \chi_\ell}(p_1,\cdots p_e)\sum_{|a|\le {A}/{p_1\cdots p_e}}\overline{\chi_{e+f+1}\cdots \chi_\ell}(a)\\
\hat{\mathcal{B}}(\overline{\chi'_{e+f+1}\cdots \chi'_\ell})&=\sum_{|b|\le {B}/{p_{e+1}\cdots p_{e+f}}}\overline{\chi'_{e+f+1}\cdots \chi'_\ell}(p_{e+1},\cdots p_{e+f} b)\\
&=\overline{\chi'_{e+f+1}\cdots \chi'_\ell}(p_{e+1},\cdots p_{e+f})\sum_{|b|\le {B}/{p_{e+1}\cdots p_{e+f}}}\overline{\chi'_{e+f+1}\cdots \chi'_\ell}(b)
\end{align*}

First of all, using (\ref{alithree}), note that the first summation on the right hand side of (\ref{e4}) is bounded by \begin{align*}O_\ell\Big(ABN^{-e-f}\Big(\sum_{N-<p<N^+}\frac{H(D_N(p))}{p}\Big)^{\ell-e-f}\Big)=O_\ell\Big(\frac{AB}{N^{e+f}}\Big(\frac{\log \log N}{\log N}\Big)^{\ell-e-f}\Big)\end{align*}

Again, we write $\hat{Z}(\hat{P},\hat{S},\hat{T})$ as $$\hat{Z}(\hat{P},\hat{S},\hat{T})=\sum_{j=1}^4\hat{Z}_j(\hat{P},\hat{S},\hat{T})$$
where
\begin{align*}
\hat{Z}_1(\hat{P},\hat{S},\hat{T})&=\frac{1}{2^\ell}\sum_{\substack{\chi_i=\chi_i'=\chi_0 \imod{p_i}\\ {\rm for}\: 1\leq i\leq \ell}}]\prod_{i=e+f+1}^\ell\left(\frac{\chi_i(s_i)\chi_i'(t_i)}{p_i-1}\right)\hat{{\mathcal{A}}}(\overline{\chi_{e+f+1}\cdots \chi_\ell})
{\hat{\mathcal{B}}}(\overline{\chi_{e+f+1}'\cdots \chi_\ell'})\\
\hat{Z}_2(\hat{P},\hat{S},\hat{T})&=\frac{1}{2^\ell}\sum_{\substack{\chi_i=(\chi_i')^6=\chi_0 \imod{p_i}\\
{\rm for}\: 1\leq i\leq \ell\:{\rm and} \\ \exists 1\leq j \leq \ell \:{\rm s.t.}\: \chi_j'\neq \chi_0 \imod{p_j}}}\prod_{i=e+f+1}^\ell\left(\frac{\chi_i(s_i)\chi_i'(t_i)}{p_i-1}\right)\hat{{\mathcal{A}}}(\overline{\chi_{e+f+1}\cdots \chi_\ell})
{\hat{\mathcal{B}}}(\overline{\chi_{e+f+1}'\cdots \chi_\ell'})\\
\hat{Z}_3(\hat{P},\hat{S},\hat{T})&=\frac{1}{2^\ell}\sum_{\substack{\chi_i'=\chi_i^4=\chi_0 \imod{p_i}\\
{\rm for}\: 1\leq i\leq \ell\:{\rm and} \\ \exists 1\leq j \leq \ell \:{\rm s.t.}\: \chi_j\neq \chi_0 \imod{p_j}}}\prod_{i=e+f+1}^\ell\left(\frac{\chi_i(s_i)\chi_i'(t_i)}{p_i-1}\right)\hat{{\mathcal{A}}}(\overline{\chi_{e+f+1}\cdots \chi_\ell})
{\hat{\mathcal{B}}}(\overline{\chi_{e+f+1}'\cdots \chi_\ell'})\\
\hat{Z}_4(\hat{P},\hat{S},\hat{T})&=\frac{1}{2^\ell}\sum_{\substack{\chi_i^4(\chi_i')^6=\chi_0 \imod{p_i}\\
{\rm for}\: 1\leq i\leq \ell\:{\rm and} \\ \exists 1\leq r,s \leq \ell \:{\rm s.t.}\: \chi_r\neq \chi_0 \imod{p_r},\\\chi_s'\neq \chi_0 \imod{p_s} }}\prod_{i=e+f+1}^\ell\left(\frac{\chi_i(s_i)\chi_i'(t_i)}{p_i-1}\right)\hat{{\mathcal{A}}}(\overline{\chi_{e+f+1}\cdots \chi_\ell})
{\hat{\mathcal{B}}}(\overline{\chi_{e+f+1}'\cdots \chi_\ell'})
\end{align*}

Now, let us denote $\hat{A}=\frac{A}{p_1\cdots p_e}$ and $\hat{B}=\frac{B}{p_{e+1}\cdots p_{e+f}}$.

Then, following the same argument as the one we used to prove (\ref{jonebound}), we should get
\begin{align}
\sumofallpsy & \Big(\prod_{i=e+f+1}^\ell\frac{1}{p_i-1}\Big)\sum_{\hat{S},\hat{T}\in \F(\hat{P})^*}\hat{h}(\hat{P},N,\hat{S},\hat{T})\Bigg(\hat{Z}_1(\hat{P},\hat{S},\hat{T})-\frac{AB/p_1p_2\cdots p_{e+f}}{2^{\ell-e-f-2}p_{e+f+1}\cdots p_\ell}\Bigg)\nonumber\\
&\ll_{\ell} \frac{\hat{A}\hat{B} (\log \log N)}{N(\log N)^{\ell-e-f}}+\frac{(\hat{A}+\hat{B}+1)}{(\log N)^{\ell-e-f}} \label{eq_014}
\end{align}

Since, the primes $p_i$'s are distinct, we also have
\begin{align}
 |\overline{\chi_{m+n+1}\cdots \chi_\ell}(p_1,\cdots p_m)|&=1 \nonumber\\
|\overline{\chi'_{m+n+1}\cdots \chi'_\ell}(p_{m+1},\cdots p_{m+n})|&=1,\label{eq_013}
\end{align}

Hence,
\begin{align*}
\hat{Z}_2(\hat{P},\hat{S},\hat{T})&\ll_\ell\frac{\hat{A}}{p_{e+f+1}\cdots p_\ell}\sum_{\substack{(\chi_i')^6=\chi_0 \imod{p_i}\\
{\rm for}\: e+f+1\leq i\leq \ell\:{\rm and} \\ \exists\, e+f+1\leq j \leq \ell \:{\rm s.t.}\: \chi_j'\neq \chi_0 \imod{p_j}}}|{\mathcal{\hat{B}}}(\overline{\chi_{e+f+1}'\cdots \chi_\ell'})|\\
&\ll_\ell \frac{A}{N^{\ell}}\sum_{\substack{(\chi_i')^6=\chi_0 \imod{p_i}\\
{\rm for}\: e+f+1\leq i\leq \ell\:{\rm and} \\ \exists\, e+f+1\leq j \leq \ell \:{\rm s.t.}\: \chi_j'\neq \chi_0 \imod{p_j}}}\Big|\sum_{|b|\le {B}/{p_{e+1}\cdots p_{e+f}}}\overline{\chi'_{e+f+1}\cdots \chi'_\ell}(b)\Big|
\intertext{and}
\hat{Z}_3(\hat{P},\hat{S},\hat{T})&\ll_\ell \frac{B}{N^{\ell}}\sum_{\substack{\chi_i^4=\chi_0 \imod{p_i}\\
{\rm for}\: e+f+1\leq i\leq \ell\:{\rm and} \\ \exists \, e+f+1\leq j \leq \ell \:{\rm s.t.}\: \chi_j\neq \chi_0 \imod{p_j}}}\Big|\sum_{|a|\le {A}/{p_1\cdots p_e}}\overline{\chi_{e+f+1}\cdots \chi_\ell}(a)\Big|
\end{align*}
Replacing $A$, $B$ and $\ell$ by $\hat{A}$, $\hat{B}$ and $\ell-e-f$ respectively in the proof of (\ref{jtwosevenbound}), we get the following inequality
\begin{align}
  \sumofallpsy & \Big(\prod_{i=e+f+1}^\ell\frac{1}{p_i-1}\Big)\sum_{\hat{S},\hat{T}\in \F(\hat{P})^*}\hat{h}(\hat{P},N,\hat{S},\hat{T})\Bigg(\hat{Z}_2(\hat{P},\hat{S},\hat{T})+\hat{Z}_3(\hat{P},\hat{S},\hat{T})\Bigg)\nonumber\\
&\ll_{k,\ell} \hat{A}\hat{B}N^{-\frac{\ell-e-f}{4k}}(\log N)^{\frac{\ell-e-f}{2k}}(\log\log N)^{\ell-e-f}(\log^{\frac{k^2-1}{2k}}\hat{A}+\log^{\frac{k^2-1}{2k}}\hat{B})\nonumber\\
&\quad +(\hat{A}\sqrt{\hat{B}}+\hat{B}\sqrt{\hat{A}})N^{\frac{3(\ell-e-f)}{4k}}(\log N)^{\frac{k^2+\ell-e-f}{2k}}(\log\log N)^{\ell-e-f}  \label{eq_015}
\end{align}
for some $k>\frac{1}{2}$.

Again, using (\ref{eq_013}) and replacing $A$, $B$ and $\ell$  by $\hat{A}$, $\hat{B}$ and $\ell-e-f$ respectively in the proof of (\ref{jeightbound}), we get
\begin{align}
\sumofallpsy  \Big(\prod_{i=e+f+1}^\ell\frac{1}{p_i-1}\Big)&\sum_{\hat{S},\hat{T}\in \F(\hat{P})^*}\hat{h}(\hat{P},N,\hat{S},\hat{T})\hat{Z}_4(\hat{P},\hat{S},\hat{T})\nonumber\\
&\ll \sqrt{\hat{A}\hat{B}}N^{\frac{3(\ell-e-f)}{4}}(\log N)^{3-(\ell-e-f)}(\log \log N)^{\frac{\ell-e-f}{2}}\label{eq_016}
\end{align}

Since $e+f\ge 1$, observe that we get a savings of a factor of $\frac{1}{\sqrt{N}}$ in $(\ref{eq_014})$, (\ref{eq_015}) and (\ref{eq_016}) compared to the upper bounds for corresponding expressions in the proof of Lemma \ref{errorimprove}. Also, in view of (\ref{eq_017}), we need to assume $A,\, B\ge N^{\epsilon}$ to make the corresponding error term sufficiently small in Proposition \ref{lm_0.1}.

As a conclusion, it is safe to claim that $\Sigma_2$ is small enough compared to the the error term in Proposition \ref{lm_0.1}. This completes the proof of Proposition \ref{lm_0.1}.\qed

\bibliographystyle{amsalpha}

\end{document}